\newtheorem{theorem}{Theorem}[section]
\newtheorem{proposition}[theorem]{Proposition}
\newtheorem{lemma}[theorem]{Lemma}
\newtheorem{corollary}[theorem]{Corollary}
\theoremstyle{definition}
\newtheorem{definition}[theorem]{Definition}
\newtheorem{notation}[theorem]{Notation}
\newtheorem{example}[theorem]{Example}
\newcommand{\PSL}{\operatorname{PSL}}
\newcommand{\SL}{\operatorname{SL}}
\newcommand{\Q}{\operatorname{{\mathbb Q}}}
\newcommand{\R}{\operatorname{{\mathbb R}}}
\newcommand{\Z}{\operatorname{{\mathbb Z}}}
\newcommand{\longpage}{\enlargethispage{\baselineskip}}
\begin{document}

\title[Semisimple tunnels]
{Semisimple tunnels}

\author{Sangbum Cho}
\address{Department of Mathematics Education\\
Hanyang University\\
Seoul 133-791\\
Korea}
\email{scho@hanyang.ac.kr}

\author{Darryl McCullough}
\address{Department of Mathematics\\
University of Oklahoma\\
Norman, Oklahoma 73019\\
USA}
\email{dmccullough@math.ou.edu}
\urladdr{www.math.ou.edu/$_{\widetilde{\phantom{n}}}$dmccullough/}
\thanks{The second author was supported in part by NSF grant DMS-0802424}

\subjclass[2000]{Primary 57M25}

\date{\today}

\keywords{knot, tunnel, (1,1), braid, torus, slope, invariant, cabling,
semisimple, 2-bridge, toroidal, algorithm}

\begin{abstract}
A knot in $S^3$ in genus-$1$  $1$-bridge position (called a
$(1,1)$-position) can be described by an element of the braid group of two
points in the torus. Our main results tell how to translate between a braid
group element and the sequence of slope invariants of the upper and lower
tunnels of the $(1,1)$-position. After using them to verify previous
calculations of the slope invariants for all tunnels of $2$-bridge knots
and $(1,1)$-tunnels of torus knots, we obtain characterizations of the
slope sequences of tunnels of $2$-bridge knots, and of a class of tunnels
we call toroidal. The main results lead to a general algorithm to calculate
the slope invariants of the upper and lower tunnels from a braid
description. The algorithm has been implemented as software, and we give
some sample computations.
\end{abstract}

\maketitle

\section*{Introduction}
\label{sec:intro}

Genus-$2$ Heegaard splittings of the exteriors of knots in $S^3$ have been
a topic of considerable interest and recent progress. Usually these are
discussed in the language of tunnels, which we will use from now on. In
particular, the term tunnel will mean a tunnel of a tunnel number $1$ knot
in~$S^3$.

A rich source of examples of tunnels are the upper and lower tunnels
associated to a knot positioned with bridge number $1$ with respect to a standard torus in  $S^3$. Traditionally this is called a
$(1,1)$-position of the knot, and the associated tunnels are called
$(1,1)$-tunnels.

In \cite{CMtree}, we laid out a theory of tunnels based on the disk complex
of the genus-$2$ handlebody. It provides a unique construction of each knot
tunnel by a sequence of ``cabling'' constructions, each determined by a
rational ``slope'' invariant (the slope invariant of the first cabling is
only defined in $\Q/\Z$). There is a second invariant, a binary sequence,
which is trivial for $(1,1)$-tunnels. Thus the sequence of slope invariants
is a complete invariant for a $(1,1)$-tunnel.

Naturally it is not very easy to calculate a complete invariant, but the
invariants are known for all the tunnels of $2$-bridge knots \cite[Section
15]{CMtree} and torus knots \cite{CMtorus}. Recently,
K. Ishihara~\cite{ishihara} has given a computational algorithm which
is effective for some examples.

There is a simple description of a $(1,1)$-position in terms of a braid of
two points in a standard torus $T$ in $S^3$: Regard the braid as two arcs in
$T\times I\subset S^3$, connect the top two points with a small trivial arc
in the ``upper'' solid torus, and similarly for the bottom two points in
the ``lower'' solid torus. Many different braids can give equivalent
$(1,1)$-positions. Some of this ambiguity is resolved by using the quotient
of the braid group by its center, which we call the reduced braid group. A
braid that produces the $(1,1)$-position is called a braid description of
it. In Section~\ref{sec:braid_group}, we will examine braid descriptions
and the reduced braid group, before detailing in
Section~\ref{sec:last_step} the first of several ``maneuvers'' involving
braids and $(1,1)$-positions. Sections~\ref{sec:tunnels}-\ref{sec:principal}
contain a review of our general theory of tunnels, focusing on the parts
needed for this paper.

Our main results are Theorems~\ref{thm:unwinding} and~\ref{thm:winding},
which allow one to pass back and forth between a braid description of a
$(1,1)$-position and the cabling slope sequence of its upper (or its lower)
tunnel. This has several applications. In Example~\ref{ex:example}, we
show how to find a braid description and use it to calculate the slope
invariants for a more-or-less random example, the knot and tunnel drawn in
Figure~10 of~\cite{CMtree}. In Section~\ref{sec:2bridge} we use braid
descriptions for the $(1,1)$-positions of all $2$-bridge knots to recover
the general calculation of slope invariants obtained in~\cite{CMtree}. We
also give a precise characterization of the slope sequences that arise from
tunnels of $2$-bridge knots. In Section~\ref{sec:torus_knots}, we use braid
descriptions for the $(1,1)$-positions of torus knots (each has a unique
$(1,1)$-position) to recover the slope invariants for their upper and lower
tunnels, first found in~\cite{CMtorus}.

A more theoretical application is given in Section~\ref{sec:toroidal},
where we show that a certain property of the sequence of slope invariants
corresponds to a $(1,1)$-position in $T\times I$ with no critical points in
either of the $S^1$-directions. We call such positions \textit{toroidal}
positions. Among the $2$-bridge knots, only the
$(2n+1,\pm2)$-torus knots admit a toroidal position.

Our final applications make the procedure of passing between braid
descriptions and slope invariants completely algorithmic. Passing from the
sequence of slope invariants to a braid description is rather easy, as
seen in Section~\ref{sec:finding_braid_words}. The other direction,
detailed in Section~\ref{sec:word_simplify}, is more difficult, since
anomalous infinite-slope cablings can arise (technically speaking, these
are not even ``cablings'') when the braid word is put into its standard
form, and one must manipulate the word to eliminate these. Both of the
algorithms, as well as the general slope calculations for $2$-bridge knot
tunnels and $(1,1)$-tunnels of torus knots, are very effective and have
been implemented in software which is available at~\cite{slopes} (other
software there finds the invariants for the ``middle'' tunnels of torus
knots). Sample calculations are given in Section~\ref{sec:computation}.

\section{Braid descriptions of $(1,1)$-positions}
\label{sec:braid_group}

In this section, we recall the 2-braid group on the torus, and its quotient
by its center. The latter, which we call the \textit{reduced braid group
  $\mathcal{B}$,} or just the \textit{braid group}, will play a central
role in our work. We will also see how an element $\omega\in \mathcal{B}$
describes a knot $K(\omega)$, and moreover a $(1,1)$-position of that
knot.

Let $T$ be a standard torus in $S^3$, bounding a solid torus $W\subset
S^3$. In our figures, $W$ usually lies above $T$.
Denoting the unit interval $[0,1]$ by $I$, fix a collar
$T\times I \subset \overline{S^3-W}$ with $T=T\times\{0\}$, and denote by
$V$ the solid torus $\overline{S^3-(W\cup T\times I)}$.

Fix a point $b\in T$, which we will refer to as the \textit{black point.}
Fix standard meridian and longitude curves $m$ and $\ell$ in $T$ such that
\begin{enumerate}
\item $m\cap \ell = b$,
\item $m$ bounds a disk in $V\cup T\times I$, and
\item $\ell$ bounds a disk in $W$.
\end{enumerate}
Choose a point $w$ in $T$ that is not in $m\cup \ell$. We will refer to $w$
as the \textit{white point.}

A braid can be described geometrically as a pair of disjoint arcs properly
embedded in $T\times I$ such that each endpoint of the arcs is one of
$b\times\{0\}$, $b\times\{1\}$, $w\times \{0\}$, or $w\times\{1\}$, and
each of the arcs meets each $T\times\{s\}$ transversely in a single
point. There is an obvious multiplication operation on the collection of
such pairs defined by ``stacking'' two pairs.

Two such pairs are equivalent if there is an isotopy $J_t$ of $T\times I$
such that
\begin{enumerate}
\item[(1)] $J_0 = id_{T\times I}$,
\item[(2)] $J_t |_{T\times \partial I} =
id_{T\times \partial I}$ for $t \in [0, 1]$,
\item[(3)] $J_t (T\times\{s\})=
T\times\{s\}$ for $s \in [0, 1]$ and $t \in [0, 1]$, and
\item[(4)] $J_1$ sends one pair to the other pair.
\end{enumerate}
The multiplication operation induces a group structure on the set of
equivalence classes, producing $\mathcal{B}_2(T)$, the classical braid
group of two points in the torus.

A presentation of $\mathcal{B}_2(T)$ is given in \cite{B} and \cite{Ta}. We
rewrite it as
\begin{gather*}\langle \delta_m, \delta_\ell, \sigma\;|\; \delta_m \sigma
\delta_m \sigma = \sigma \delta_m \sigma \delta_m, \delta_\ell \sigma \delta_\ell
\sigma = \sigma \delta_\ell \sigma \delta_\ell,\\
\delta_m^{-1} \delta_\ell \delta_m
\delta_\ell^{-1} = \sigma^2, \sigma \delta_\ell \sigma \delta_m = \delta_m \sigma
\delta_\ell \sigma^{-1} \rangle\ .
\end{gather*}
In the notation of \cite{Ta}, $\delta_m = y_1$, $\delta_\ell = x_1^{-1}$,
and $\sigma= s_1$.

As seen in Figure~\ref{fig:classical_braid},
representatives of $\delta_m$ and $\delta_\ell$ slide the black point
around $m$ and $\ell$ respectively, while keeping the white point fixed. A
representative of $\sigma$ produces a half-twist of the two strands, as
shown.
\begin{figure}
\labellist
\pinlabel $m$ [B] at 116 285
\pinlabel $\ell$ [B] at 5 226
\endlabellist
\begin{center}
\includegraphics[width=22ex]{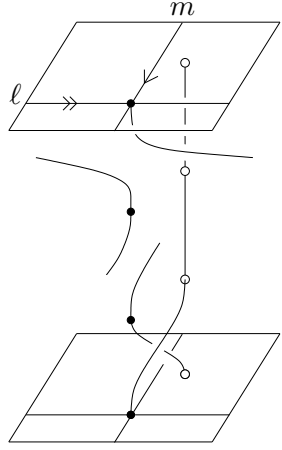}
\caption{An imbedding representing the element $\delta_\ell\delta_m\sigma$
in $\mathcal{B}_2(T)$. We read braids from the top down, so in this picture,
the solid torus $W$ lies above the upper copy of the torus, which is
$T=T\times \{0\}$.}
\label{fig:classical_braid}
\end{center}
\end{figure}

Now we weaken condition $(2)$ in the definition of equivalence of braids to
\begin{enumerate}
\item[($2'$)] $J_1 |_{T\times \partial I} = id_{T\times \partial I}$.
\end{enumerate}
That is, we do not require that each $J_t$ be the identity on $T\times \partial I$
for $t \in (0, 1)$. We call the new equivalence classes of the pairs of
arcs under this condition \textit{reduced braids}, and the group of all
reduced braids is the \textit{reduced braid group} denoted
by~$\mathcal{B}$.

The fundamental group $\pi_1(T) = \Z \times \Z$ can be regarded as a
subgroup of $\mathcal{B}_2(T)$, as the subgroup generated by $\delta_\ell
\sigma \delta_\ell\sigma$ and $\delta_m \sigma \delta_m \sigma$.
Figure~\ref{fig:braid1} illustrates a pair of arcs representing
$\delta_\ell \sigma \delta_\ell\sigma$ in $\mathcal{B}_2(T)$.  Using the
presentation of $\mathcal{B}_2(T)$, one can verify that $\pi_1(T)$ is
central in~$\mathcal B$.
\begin{figure}
\labellist
\pinlabel $m$ [B] at 116 176
\pinlabel $\ell$ [B] at 3 118
\endlabellist
\centering
\includegraphics[width=22 ex]{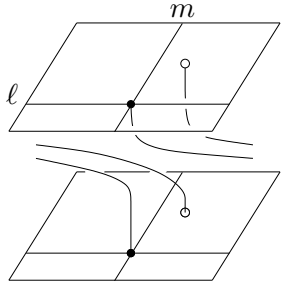}
\caption{A braid which represent $\delta_\ell \sigma \delta_\ell\sigma$. A
similar pair, winding in the $m$-direction, represents
$\delta_m\sigma\delta_m\sigma$.}
\label{fig:braid1}
\end{figure}

Weakening condition (2) to $(2')$ has the effect of making the braids
$\delta_\ell\sigma\delta_\ell\sigma$ and $\delta_m\sigma\delta_m\sigma$
trivial. On the other hand, the additional isotopies allowed by condition
$(2')$ are just products of the isotopies that move those two braids to the
trivial braid (or the reverses of these isotopies). Thus $(2')$ has the
effect of adding the two relations $\delta_m \sigma \delta_m\sigma$ and
$\delta_\ell \sigma \delta_\ell\sigma$ to the above presentation of
$\mathcal{B}_2(T)$, giving the following proposition.

\begin{proposition}
The reduced braid group $\mathcal B$ has the presentation
\begin{center}
$\langle \;\delta_m, \delta_\ell, \sigma \;|\;(\delta_m\sigma)^2 =
(\delta_\ell\sigma)^2 =1, \;\delta_m^{-1}\delta_\ell \delta_m
\delta_\ell^{-1} = \sigma^2 \; \rangle$.
\end{center}
\label{prop:braid_group_presentation}
\end{proposition}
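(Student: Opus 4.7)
The plan is to leverage the geometric analysis developed in the paragraphs just before the statement. That analysis shows that $\mathcal{B}$ is obtained from $\mathcal{B}_2(T)$ by killing exactly the two elements $\delta_m\sigma\delta_m\sigma$ and $\delta_\ell\sigma\delta_\ell\sigma$, which generate the central subgroup $\pi_1(T)$. Since these elements lie in the center, the normal closure equals the subgroup they generate, so it suffices to start from the four-relation presentation of $\mathcal{B}_2(T)$ given above, adjoin the two new relations $(\delta_m\sigma)^2=1$ and $(\delta_\ell\sigma)^2=1$, and then simplify by Tietze transformations.

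First I would dispose of the first two relations of the $\mathcal{B}_2(T)$ presentation. Both $\delta_m\sigma\delta_m\sigma=\sigma\delta_m\sigma\delta_m$ and $\delta_\ell\sigma\delta_\ell\sigma=\sigma\delta_\ell\sigma\delta_\ell$ become automatic once $(\delta_m\sigma)^2$ and $(\delta_\ell\sigma)^2$ are trivial, because conjugating the new relations by $\sigma^{-1}$ gives $(\sigma\delta_m)^2=1$ and $(\sigma\delta_\ell)^2=1$ as well. Thus these two original relations may be deleted without changing the group. The third relation $\delta_m^{-1}\delta_\ell\delta_m\delta_\ell^{-1}=\sigma^2$ is retained verbatim.

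The main step, and the only one requiring a short word calculation, is to show that the fourth relation $\sigma\delta_\ell\sigma\delta_m=\delta_m\sigma\delta_\ell\sigma^{-1}$ follows from the retained relations together with $(\delta_m\sigma)^2=1$ and $(\delta_\ell\sigma)^2=1$. From $(\delta_\ell\sigma)^2=1$ we obtain $\sigma\delta_\ell\sigma=\delta_\ell^{-1}$, so the left-hand side simplifies to $\delta_\ell^{-1}\delta_m$, and the right-hand side becomes $\delta_m\delta_\ell^{-1}\sigma^{-2}$. Substituting $\sigma^{-2}=\delta_\ell\delta_m^{-1}\delta_\ell^{-1}\delta_m$ from the third relation collapses the right-hand side to $\delta_\ell^{-1}\delta_m$ as well. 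This identification makes the fourth relation redundant, leaving precisely the three relations claimed.

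I expect the last verification to be the only real obstacle, in the sense of being a short but slightly opaque manipulation; the preceding geometric identification of $\mathcal{B}$ as the quotient by the center has already been established in the discussion above, and the removal of the first two relations is immediate. Nothing more than straightforward Tietze simplification is required once the key calculation is in hand.
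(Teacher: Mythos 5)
Your proposal is correct and follows essentially the same route as the paper: the paper likewise obtains $\mathcal{B}$ from the presentation of $\mathcal{B}_2(T)$ by adjoining the two relations $(\delta_m\sigma)^2=(\delta_\ell\sigma)^2=1$ coming from the central subgroup $\pi_1(T)$, and then passes directly to the stated presentation. The only difference is that the paper leaves the Tietze simplification implicit, whereas you carry it out explicitly; your verifications that the first two original relations become automatic and that the fourth reduces to $\delta_\ell^{-1}\delta_m=\delta_\ell^{-1}\delta_m$ via $\sigma\delta_\ell\sigma=\delta_\ell^{-1}$ and $\sigma^{-2}=\delta_\ell\delta_m^{-1}\delta_\ell^{-1}\delta_m$ are accurate.
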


As shown in Figure~\ref{fig:braids}(b), in $\mathcal{B}$
there are other representatives of $\delta_m$ and $\delta_\ell$ that slide
the white point backwards along loops parallel to $m$ and $\ell$
respectively.  An isotopy that moves $b\times\{0\}$ around the loop $\ell$
changes the first representative of $\delta_\ell$, that moves the black
point strand, to the second, that moves the white point strand in the other
direction. Similarly, an isotopy that moves $b\times\{0\}$ around $m$
changes the representative of $\delta_m$ that moves the black strand to one
that moves the white strand in the other direction. These correspond to the
facts that $\sigma^{-1}\delta_\ell^{-1}\sigma^{-1}=\delta_\ell$ and
similarly for~$\delta_m$.
\begin{figure}
\labellist
\pinlabel $m$ [B] at 115 365
\pinlabel $\ell$ [B] at 2 306
\pinlabel $m$ [B] at 340 365
\pinlabel $\ell$ [B] at 228 306
\pinlabel $\alpha_W$ [B] at 67 338
\pinlabel (a) [B] at 3 350
\pinlabel (b) [B] at 230 350
\pinlabel $\alpha_W$ [B] at 292 338
\pinlabel $M$ [B] at 162 49
\pinlabel $L$ [B] at 63 18
\pinlabel $M$ [B] at 387 49
\pinlabel $L$ [B] at 290 18
\pinlabel $\alpha_V$ [B] at 135 3
\pinlabel $\alpha_V$ [B] at 360 3
\endlabellist
\begin{center}
\includegraphics[width=47ex]{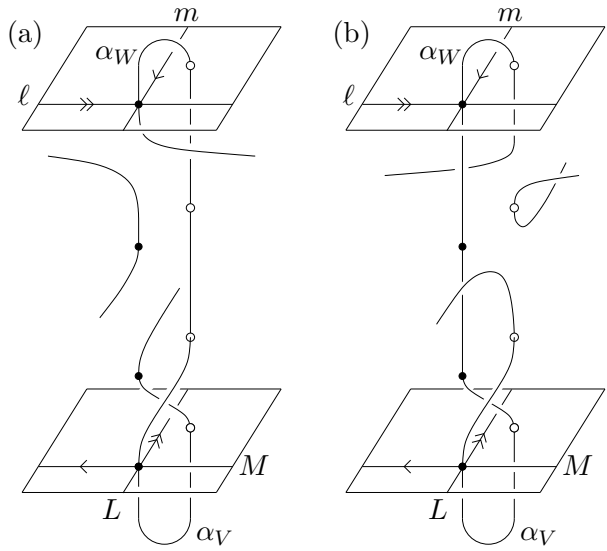}
\caption{Two imbeddings representing the element
$\delta_\ell\delta_m\sigma$ in $\mathcal{B}$. Also shown are the meridian
$M$ and longitude $L$ for the reverse braid $\sigma\delta_L\delta_M$, and
the trivial arcs $\alpha_W$ and $\alpha_V$ added to form the knot
$K(\delta_\ell\delta_m\sigma)$. It is the trivial knot.}
\label{fig:braids}
\end{center}
\end{figure}

A nice way to understand the relations in $\mathcal{B}$ is to consider the
picture of the universal cover of $T$ shown in Figure~\ref{fig:univ_cover}.
\begin{figure}
\labellist
\pinlabel $\delta_\ell$ [B] at 56 149
\pinlabel $\delta_\ell$ [B] at 127 149
\pinlabel $\delta_\ell$ [B] at 56 76
\pinlabel $\delta_\ell$ [B] at 127 76
\pinlabel $\delta_\ell$ [B] at 56 4
\pinlabel $\delta_\ell$ [B] at 127 4
\pinlabel $\delta_m$ [B] at 6 125
\pinlabel $\delta_m$ [B] at 79 125
\pinlabel $\delta_m$ [B] at 152 125
\pinlabel $\delta_m$ [B] at 6 52
\pinlabel $\delta_m$ [B] at 79 52
\pinlabel $\delta_m$ [B] at 152 52
\pinlabel $\sigma$ [B] at 39 105
\pinlabel $\sigma$ [B] at 110 105
\pinlabel $\sigma$ [B] at 39 34
\pinlabel $\sigma$ [B] at 110 34
\endlabellist
\begin{center}
\includegraphics[width=30ex]{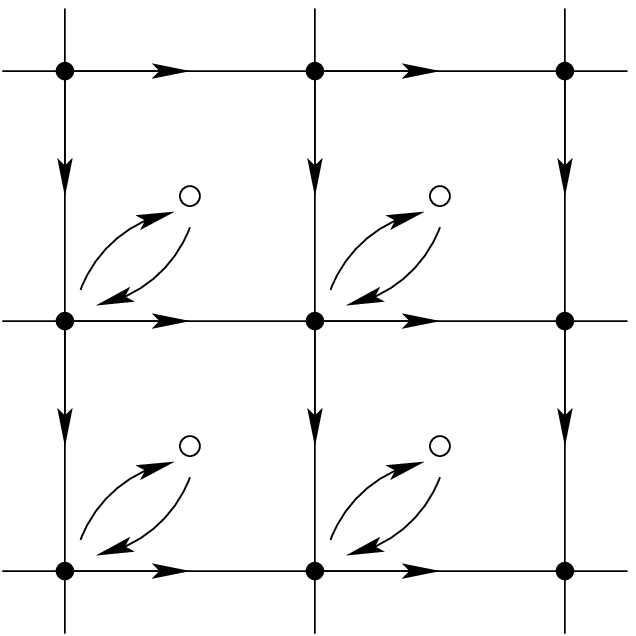}
\caption{The universal cover of $T$, illustrating the relations in
$\mathcal{B}$.}
\label{fig:univ_cover}
\end{center}
\end{figure}
In this picture, $\delta_\ell$ slides the black points one unit to the
right, $\delta_m$ slides the black points one unit downward, and $\sigma$
interchanges black and white points as indicated. The product
$\sigma\delta_\ell\sigma$ slides the white points one unit to the right, so
the effect of $\delta_\ell\sigma\delta_\ell\sigma$ is to slide both black
and white one unit to the right. The relation
$\delta_m\sigma\delta_m\sigma$ is similar, while the word
$\delta_m^{-1}\delta_\ell\delta_m\delta_\ell^{-1}$ corresponds to a
braid for which the white
points are fixed and the black points travel clockwise around the
squares in Figure~\ref{fig:univ_cover}, starting from the lower left-hand
corner, which is the effect of~$\sigma^2$.

Since we will have no further use for $\mathcal{B}_2(T)$, it is safe just
to call $\mathcal{B}$ \textit{the braid group,} and its elements
\textit{braids.}

Figure~\ref{fig:braids} also illustrates the \textit{reverse braid,} which
is obtained if one views the picture from below. The meridian and longitude
$M$ and $L$ seen from below are analogous to the meridian $m$ and longitude
$\ell$ seen from above. Note that they are interchanged, so that $\delta_m$
seen from below is $\delta_L$ and $\delta_\ell$ seen from below is
$\delta_M$. Although both have the reversed orientation, a braid
$\delta_m$ or $\delta_\ell$ seen from below has the point moving in
reversed time, so $\delta_m$ becomes $\delta_L$ and $\delta_\ell$ becomes
$\delta_M$. On the other hand, $\sigma$ seen from below still looks like
$\sigma$. Thus the reverse braid has $\delta_m$ replaced by $\delta_L$ and
$\delta_\ell$ replaced by $\delta_M$, with $\sigma$ unchanged, and the
order of the letters reversed.

Figure~\ref{fig:braids} also illustrates the \textit{knot described by the
  braid $\omega$.} One simply attaches the two standard arcs $\alpha_V$ and
$\alpha_W$ at the bottom and top.  In Figure~\ref{fig:braids}, $\alpha_V$
and $\alpha_W$ are obtained from arcs $\alpha \times \{1\}$ and $\alpha
\times \{0\}$ respectively, pushing the former slightly into $V$ and the
latter into $W$, where $\alpha$ is a fixed arc in $T$ connecting $b$ and
$w$, and meeting $m \cup \ell$ only in~$b$. There are four isotopy classes
of such arcs, and we select $\alpha$ to lie in the isotopy class that
at $b$ leaves $m$ in the direction of the positive orientation on $\ell$ (using
the orientation shown in of Figure~\ref{fig:braids}) and leaves $\ell$ in
the direction of the negative orienation on $m$. The lifts of $\alpha$ to
the universal covering space of $T$ shown in Figure~\ref{fig:univ_cover}
leave the preimages of $b$ from the lower-left hand corner of each square.

The resulting knot $K(\omega)$ is well-defined, indeed equivalent reduced
braids describe knots that are in $(1,1)$-position with respect to $T$ and
are $(1,1)$-isotopic (that is, isotopic by an isotopy of $S^3$ preserving
$T$ at all times). The notation $K(\omega)$ implicitly includes this
well-defined $(1,1)$-position.  We say that $K(\omega)$ is \textit{in braid
position,} and that the element $\omega$ is a \textit{braid description}
of the knot and its $(1,1)$-position (with respect to the fixed arc
$\alpha$). A braid and its reverse braid describe isotopic knots, but the
two $(1,1)$-positions have upper and lower tunnels interchanged.

Observe that $K(\omega)$ and $K(\omega\delta_m)$ are $(1,1)$-isotopic, by
an isotopy that pushes $K(\omega)$ across the core circle of the solid
torus $V$. Similarly, $K(\omega)$ is $(1,1)$-isotopic to
$K(\delta_\ell\omega)$, and $K(\omega)$ to both $K(\omega\sigma)$ and
$K(\sigma\omega)$. In general, if $W_0(\delta_\ell,\sigma)$ is a word in
the letters $\delta_\ell$ and $\sigma$, and $W_1(\delta_m,\sigma)$ is a
word in $\delta_m$ and $\sigma$, then $K(W_0(\delta_\ell,\sigma)\omega
W_1(\delta_m,\sigma))$ and $K(\omega)$ are $(1,1)$-isotopic. For example, the
knot $K(\delta_\ell\delta_m\sigma)$ in Figure~\ref{fig:braids} is
$(1,1)$-isotopic to $K(1)$, hence is trivial.

\begin{notation}\label{not:double_coset}
Let $\langle \delta_\ell,\sigma\rangle$ denote the subgroup of
$\mathcal{B}$ generated by $\delta_\ell$ and $\sigma$, and similarly for
$\langle \delta_m,\sigma\rangle$. We will write $\omega_1\sim \omega_2$ to
mean that $\omega_1$ and $\omega_2$ represent the same double coset of the
form $\langle \delta_\ell,\sigma\rangle \omega_1\langle
\delta_m,\sigma\rangle$, and consequently are braid descriptions of
the same $(1,1)$-position.
\end{notation}

\section{Tunnels in standard position}
\label{sec:last_step}

\begin{figure}
\labellist
\large \hair 2pt
\pinlabel $\alpha$ [B] at 85 114
\pinlabel $\alpha_W$ [B] at 486 206
\pinlabel (a) [B] at 80 -2
\pinlabel (b) [B] at 284 -2
\pinlabel (c) [B] at 485 -2
\endlabellist
\begin{center}
\includegraphics[width=64ex]{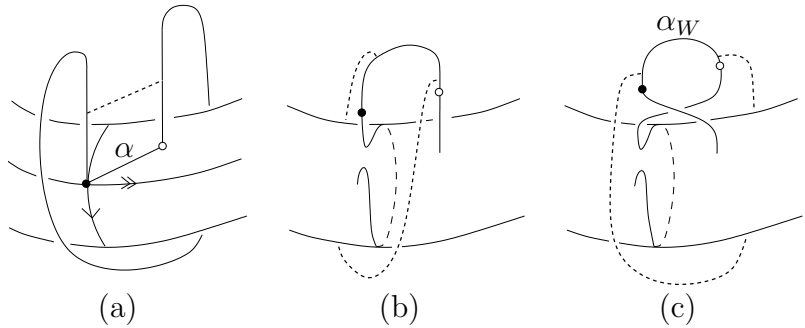}
\caption{Introduction of an initial $\sigma^{-1}\delta_m^{-1}$ in order to
reposition a tunnel into standard position.}
\label{fig:last_step}
\end{center}
\end{figure}
In this section, we will introduce a basic maneuver.
Figure~\ref{fig:last_step}(a) shows the torus $T=T\times\{0\}$, the portion of
a $(1,1)$-knot that lies in $W$, and a tunnel arc for the ``upper''
(1,1)-tunnel of the knot. The segment on $T$ connecting black and white
vertices is the fixed arc $\alpha$ used
to define the $(1, 1)$-knot from a braid
description as in Section~\ref{sec:braid_group}.
The knot is not in braid position,
because it does not meet $W$ in an arc parallel to
$\alpha$. There is an isotopy of the knot, preserving $(1,1)$-position,
that changes the placement in Figure~\ref{fig:last_step}(a) to the one in
Figure~\ref{fig:last_step}(b).  The arc of the knot in $W$ moves to a
different arc, and a braid letter $\delta_m^{-1}$ appears just inside the
solid torus. The tunnel arc is stretched out to wind around the core of
$W$. A further $(1,1)$-isotopy changes Figure~\ref{fig:last_step}(b) to
Figure~\ref{fig:last_step}(c), introducing another braid letter
$\sigma^{-1}$. If we push the knot down into $T\times I$ until its
intersection with $W$ is parallel to $\alpha$ (or equivalently enlarge $T\times I$ to
include the portion of the knot that lies below the black and white
points), then the knot is now in braid position (assuming that its
original intersection with $V\cup T\times I$ is consistent with braid position).

When the upper tunnel and the portion of $K$ in $W$ consist of a tunnel arc
and a trivial arc as in the portion above the black and white points in
Figure~\ref{fig:last_step}(c), we say that $K$ and the upper tunnel (arc)
are in \textit{standard position.}  Thus the effect of the maneuver just
described is to move a tunnel arc and $K$ that appear as in
Figure~\ref{fig:last_step}(a) so that $K$ and the tunnel are in standard
position, while changing the braid represented by the portion of $K$ in
$T\times I$ by premultiplication by $\sigma^{-1}\delta_m^{-1}$. Since
$\sigma^{-1}\delta_m^{-1}=(\delta_m\sigma)^{-1}=\delta_m\sigma$ in
$\mathcal{B}$, this is equivalent to premultiplication by $\delta_m\sigma$.

\section{Tunnels as disks}
\label{sec:tunnels}

Our previous articles \cite{CMtree,CMtorus,CMgiantsteps,CMdepth} give the
details of our general theory of knot tunnels. Of these, \cite{CMtree} is
the most complete, while of the shorter summaries in the other papers, the
material in \cite{CMtorus} is the closest to our present needs. For
convenience of the reader, we will provide in this and the next three
sections a review adapted to our work here. It also establishes notation that
is used in the rest of the paper.

This section gives a brief overview of the theory in~\cite{CMtree}. Fix a
``standard'' genus-2 unknotted handlebody $H$ in $S^3$. Regard a tunnel of
$K$ as a $1$-handle attached to a neighborhood of $K$ to obtain an
unknotted genus-$2$ handlebody. Moving this handlebody to $H$ by an isotopy
of $S^3$, a cocore disk for the $1$-handle moves to a nonseparating disk in
$H$. The indeterminacy due to the choice of isotopy is exactly the Goeritz
group, which is the group of path components of the space of
orientation-preserving homeomorphisms of $S^3$ that take $H$ onto
$H$. Consequently, the collection of all tunnels of all tunnel number~$1$
knots, up to orientation-preserving homeomorphism, corresponds to the
orbits of nonseparating disks in $H$ under the action of the Goeritz group.
Indeed, it is convenient for us to \textit{define} a tunnel to be an orbit
of nonseparating disks in $H$ under the action of the Goeritz group.

Work of Scharlemann, Akbas, and Cho~\cite{Akbas,Cho,ScharlemannTree} gives
a very good understanding of the way that the Goeritz group acts on the
disks in $H$. As detailed in~\cite{CMtree}, the orbits, i.e.~the tunnels,
can be arranged in a treelike structure which encodes much of the
topological structure of tunnel number~$1$ knots and their tunnels.

When a nonseparating disk $\tau \subset H$ is regarded as a representative
of a tunnel, or simply a tunnel itself, the corresponding knot is a core
circle of the solid torus that results from cutting $H$ along $\tau$. This
knot is denoted by~$K_\tau$. For example, in the handlebody shown in
Figure~\ref{fig:slopedisks}, a
core circle of the solid torus cut off by the middle disk is a trefoil
knot.

A disk $\tau$ in $H$ is called \textit{primitive} if there is a disk
$\tau'$ in $\overline{S^3-H}$ such that $\partial \tau$ and $\partial
\tau'$ cross in one point in $\partial H$. Equivalently, $K_\tau$ is the
trivial knot in~$S^3$. All primitive disks are equivalent under the action
of the Goeritz group. This equivalence class is the unique tunnel of the
trivial knot.

A \textit{primitive pair} is an isotopy class of two disjoint nonisotopic
primitive disks in $H$. A \textit{primitive triple} is defined
similarly. All primitive pairs are equivalent under the Goeritz group, as
are all primitive triples.

It is important to understand that a triple of nonseparating disks in $H$
corresponds to an isotopy class of $\theta$-curves in $H$, specifically,
the $\theta$-curve whose arcs are ``dual'' to the three disks--- each arc
cuts across exactly one of the three disks once, each disk meets exactly
one of the three arcs, and each of the balls obtained by cutting $H$ along
the union of the three disks deformation retracts to the portion of the
$\theta$-curve that it contains. This $\theta$-curve is ``unknotted'' in
$S^3$, that is, the closure of the complement of a regular neighborhood is
a genus-$2$ handlebody. Thus an orbit of such triples under the Goeritz
group corresponds to an isotopy class in $S^3$ of unknotted
$\theta$-curves.

\section{Slope disks and cabling arcs}
\label{sec:slopedisks}

This section gives the definitions needed for computing the slope
invariants that will be discussed in Section~\ref{sec:cabling}.
Fix a pair of disjoint nonseparating disks $\lambda$ and $\rho$ (for ``left'' and
``right'') in the standard unknotted handlebody $H$ in $S^3$, as shown
abstractly in Figure~\ref{fig:slopedisks}. The pair $\{\lambda, \rho\}$ is
arbitrary, so in the true picture in $H$ in $S^3$, they will typically look
a great deal more complicated than the pair shown in
Figure~\ref{fig:slopedisks}. Let $N$ be a regular neighborhood of
$\lambda\cup \rho$ and let $B$ be the closure of $H-N$. The frontier of $B$
in $H$ consists of four disks which appear vertical in
Figure~\ref{fig:slopedisks}. Denote this frontier by $F$, and let $\Sigma$
be $B\cap \partial H$, a sphere with four holes.
\begin{figure}
\labellist
\small \hair 2pt
\pinlabel $\lambda$ [B] at -18 136
\pinlabel $\rho$ [B] at 595 136
\endlabellist
\begin{center}
\includegraphics[width=65 ex]{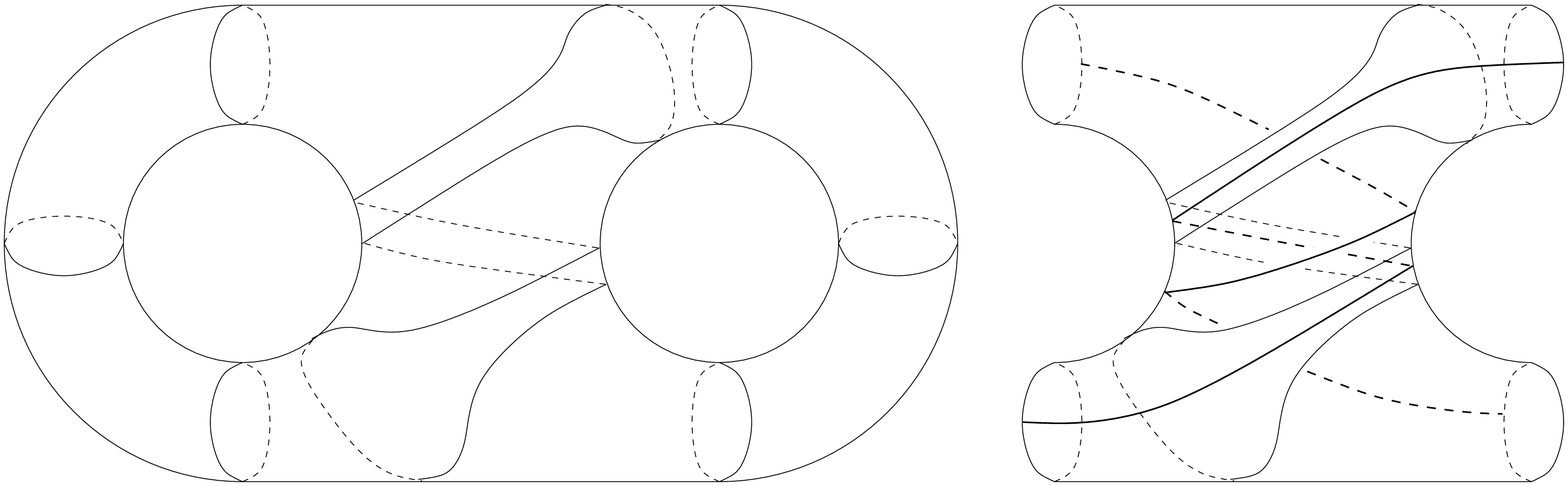}
\caption{A slope disk of $\{\lambda,\rho\}$, and a pair of its cabling arcs
contained in $B$.}
\label{fig:slopedisks}
\end{center}
\end{figure}

A \textit{slope disk for $\{\lambda,\rho\}$} is an essential disk in $H$,
possibly separating, which is contained in $B-F$ and is not isotopic to any
component of~$F$. Any loop in $\Sigma$ that is not homotopic into $\partial
\Sigma$ is the boundary of a unique slope disk.  (Throughout our work,
``unique'' means unique up to isotopy in an appropriate sense.) If two
slope disks are isotopic in $H$, then they are isotopic in~$B$. The
boundary of a slope disk always separates $\Sigma$ into two pairs of pants.

An arc in $\Sigma$ whose endpoints lie in two different boundary circles of
$\Sigma$ is called a \textit{cabling arc.}  Figure~\ref{fig:slopedisks} shows a
pair of cabling arcs disjoint from a slope disk. A slope disk is disjoint
from a unique pair of cabling arcs, and each cabling arc determines a
unique slope disk.

Each choice of nonseparating slope disk for a pair $\mu=\{\lambda,\rho\}$
determines a correspondence between~$\Q\cup\{\infty\}$ and the set of
isotopy classes of slope disks of $\mu$, as follows. Fixing a nonseparating
slope disk $\tau$ for $\mu$, write $(\mu;\tau)$ for the ordered pair
consisting of $\mu$ and~$\tau$.
\begin{definition} A \textit{perpendicular disk} for $(\mu;\tau)$
is a disk $\tau^\perp$, with the following properties:
\begin{enumerate}
\item $\tau^\perp$ is a slope disk for $\mu$.
\item $\tau$ and $\tau^\perp$ intersect transversely in one arc.
\item $\tau^\perp$ separates $H$.
\end{enumerate}
\end{definition}
There are infinitely many choices for $\tau^\perp$, but because $H\subset
S^3$ there is a natural way to choose a particular one, which we call
$\tau^0$. It is illustrated in Figure~\ref{fig:slope_coords}. To construct
it, start with any perpendicular disk and change it by Dehn twists of $H$
about $\tau$ until the core circles of the complementary solid tori have
linking number~$0$ in~$S^3$.
\begin{figure}
\labellist
\small \hair 2pt
\pinlabel $\lambda^+$ [B] at 148 298
\pinlabel $\rho^+$ [B] at 437 300
\pinlabel $\lambda^-$ [B] at 149 -25
\pinlabel $\rho^-$ [B] at 433 -21
\pinlabel $\lambda$ [B] at -15 140
\pinlabel $\tau$ [B] at 376 141
\pinlabel $\rho$ [B] at 594 140
\pinlabel $K_\rho$ [B] at 210 222
\pinlabel $K_\lambda$ [B] at 369 222
\pinlabel $\tau^0$ [B] at 291 298
\endlabellist
\begin{center}
\includegraphics[width=45 ex]{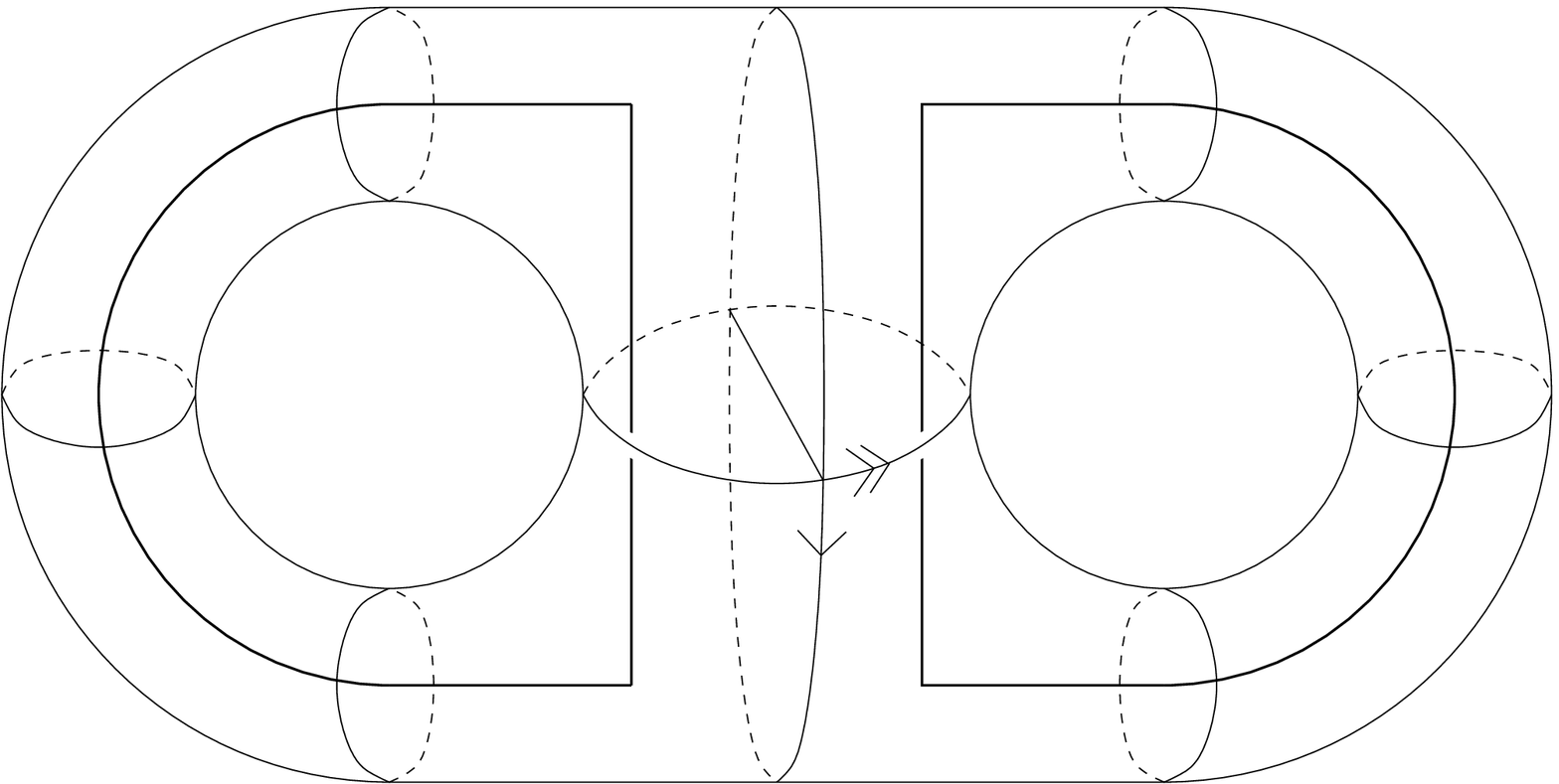}
\caption{The slope-zero perpendicular disk $\tau^0$. It is chosen so that
$K_\lambda$ and $K_\rho$ have linking number~$0$.}
\label{fig:slope_coords}
\end{center}
\end{figure}

For calculations, it is convenient to draw the picture as in
Figure~\ref{fig:slope_coords}, and orient the boundaries of $\tau$ and
$\tau^0$ so that the orientation of $\tau^0$ (the ``$x$-axis''), followed
by the orientation of $\tau$ (the ``$y$-axis''), followed by the outward
normal of $H$, is a right-hand orientation of $S^3$. At the other
intersection point, these give the left-hand orientation. The coordinates
will be unaffected by changing which of the disks in $\{\lambda,\rho\}$ is
called $\lambda$ and which is~$\rho$.

Let $\widetilde{\Sigma}$ be the covering space of $\Sigma$ such that:
\begin{enumerate}
\item $\widetilde{\Sigma}$ is the plane with an open disk of radius $1/8$
removed at each point with coordinates in $\Z\times \Z+(\frac12,\frac12)$.
\item The components of the preimage of $\tau$ are the vertical lines
with integer $x$-coordinate.
\item The components of the preimage of $\tau^0$ are the horizontal lines
with integer $y$-coordinate.
\end{enumerate}
\noindent Figure~\ref{fig:covering} shows a picture of $\widetilde{\Sigma}$
and a fundamental domain for the action of its group of covering
transformations, which is the orientation-preserving subgroup of the group
generated by reflections in the half-integer lattice lines (that pass
through the centers of the missing disks). Each circle of
$\partial\widetilde{\Sigma}$ double covers a circle of~$\partial \Sigma$.
\begin{figure}
\labellist
\small \hair 2pt
\pinlabel $\lambda^+$ [B] at 65 66
\pinlabel $\lambda^-$ [B] at 138 66
\pinlabel $\lambda^+$ [B] at 209 66
\pinlabel $\lambda^-$ [B] at 281 66
\pinlabel $\rho^+$ [B] at 65 142
\pinlabel $\rho^-$ [B] at 138 142
\pinlabel $\rho^+$ [B] at 209 142
\pinlabel $\rho^-$ [B] at 281 142
\pinlabel $\lambda^+$ [B] at 65 210
\pinlabel $\lambda^-$ [B] at 138 210
\pinlabel $\lambda^+$ [B] at 209 210
\pinlabel $\lambda^-$ [B] at 281 210
\pinlabel $\rho^+$ [B] at 65 286
\pinlabel $\rho^-$ [B] at 138 286
\pinlabel $\rho^+$ [B] at 209 286
\pinlabel $\rho^-$ [B] at 281 286
\pinlabel $\lambda^+$ [B] at 420 0
\pinlabel $\lambda^-$ [B] at 740 0
\pinlabel $\rho^+$ [B] at 420 310
\pinlabel $\rho^-$ [B] at 745 309
\pinlabel $\tau^0$ [B] at 738 158
\pinlabel $\tau$ [B] at 578 244
\endlabellist
\begin{center}
\includegraphics[width=\textwidth]{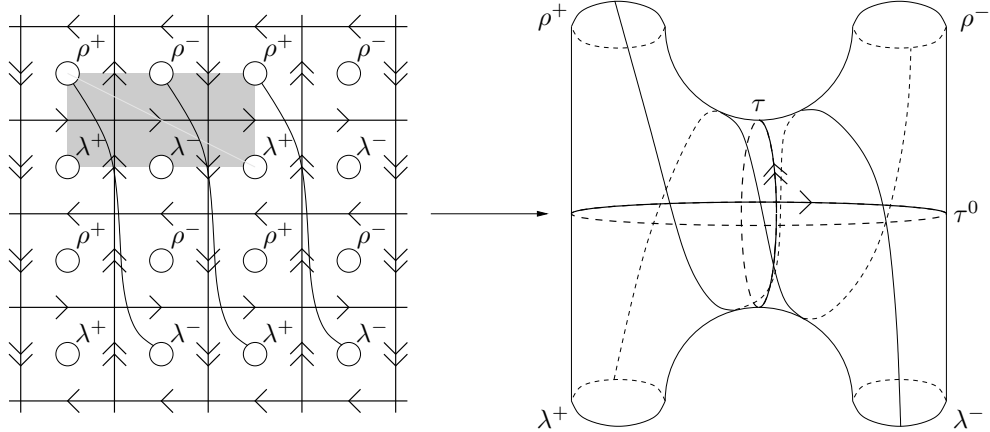}
\caption{The covering space $\widetilde{\Sigma}\to\Sigma$, and some lifts
of the pair of $[1,-3]$-cabling arcs from
Figure~\ref{fig:slopedisks}. The shaded region is a fundamental domain.}
\label{fig:covering}
\end{center}
\end{figure}

Each lift of a cabling arc $\alpha$ of $\Sigma$ to $\widetilde{\Sigma}$
runs from a boundary circle of $\widetilde{\Sigma}$ to one of its
translates by a vector $(p,q)$ of signed integers, defined up to
multiplication by the scalar $-1$. In this way $\alpha$ receives a
\textit{slope pair} $[p,q]=\{(p,q),(-p,-q)\}$, and is called a
\textit{$[p,q]$-cabling arc.} The corresponding slope disk is assigned the
slope pair $[p,q]$ as well, and can be called a \textit{$[p,q]$-slope
  disk.} The cabling arcs in Figure~\ref{fig:covering} are $[1,-3]$-cabling
arcs. A corresponding $[1,-3]$-slope disk is the one shown in
Figure~\ref{fig:slopedisks}.

An important observation is that a $[p,q]$-slope disk is nonseparating in
$H$ if and only if $q$ is odd. Both happen exactly when a corresponding
cabling arc has one endpoint in $\lambda^+$ or $\lambda^-$ and the other in
$\rho^+$ or~$\rho^-$.

\begin{definition} Let $\lambda$, $\rho$, and $\tau$ be as above, and let
$\mu=\{\lambda,\rho\}$. The \textit{$(\mu;\tau)$-slope} of a $[p,q]$-slope
disk or cabling arc is~$q/p\in \Q\cup\{\infty\}$.
\end{definition}
\noindent The $(\mu;\tau)$-slope of $\tau^0$ is $0$, the
$(\mu;\tau)$-slope of $\tau$ is~$\infty$, and the $(\mu;\tau)$-slope of a
slope disk that looks like the one in Figure~\ref{fig:slopedisks} is
$-3$. The $(\mu;\tau)$-slope can also be called the
$\{\tau,\tau^0\}$-slope, when the choice of $\mu$ is clear.

Slope disks for a primitive pair are called \textit{simple} disks, and are
handled in a special way. Rather than using a particular choice of $\tau$
from the context, one chooses $\tau$ to be some third primitive disk.
Altering this choice can change $[p,q]$ to any $[p+nq,q]$, but the quotient
$p/q$ is well-defined as an element of $\Q/\Z\cup\{\infty\}$. This element
$[p/q]$ is called the \textit{simple slope} of the slope disk. For example,
if $\mu$ is a primitive pair, the simple slope of the disk from
Figure~\ref{fig:slopedisks} is $[2/3]$. The simple slope of a slope disk is
$[0]$ exactly when the slope disk is itself primitive. Simple disks have
the same simple slope exactly when they are equivalent by an element of the
Goeritz group.

\section{The cabling construction}
\label{sec:cabling}

In a sentence, the cabling construction (sometimes just called a
\textit{cabling}) is to ``Think of the union of $K$ and the tunnel arc as a
$\theta$-curve, and rationally tangle the ends of the tunnel arc and one of
the arcs of $K$ in a neighborhood of the other arc of $K$.''  We sometimes
call this ``swap and tangle,'' since one of the arcs in the knot is
exchanged for the tunnel arc, then the ends of other arc of the knot and
the tunnel arc are connected by a rational tangle.

Figure~\ref{fig:cabling} illustrates two cablings, one starting with the
trivial knot and obtaining the trefoil, then another starting with the
tunnel of the trefoil.
\begin{figure}
\labellist
\small \hair 2pt
\pinlabel $\pi_0$ [B] at -7 178
\pinlabel $\pi$ [B] at 65 178
\pinlabel $\pi_1$ [B] at 120 178
\pinlabel $\pi_0$ [B] at 177 178
\pinlabel $\tau_0$ [B] at 239 227
\pinlabel $\pi_1$ [B] at 304 177
\pinlabel $\pi_1$ [B] at 66 121
\pinlabel $\tau_0$ [B] at 127 93
\pinlabel $\pi_0$ [B] at 93 39
\pinlabel $\tau_1$ [B] at 226 54
\pinlabel $\pi_0$ [B] at 250 32
\pinlabel $\tau_0$ [B] at 290 99
\endlabellist
\begin{center}
\includegraphics[width=50 ex]{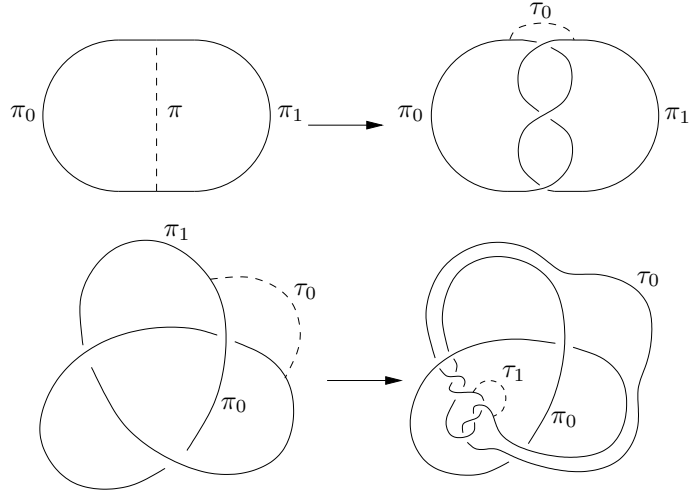}
\caption{Examples of the cabling construction.}
\label{fig:cabling}
\end{center}
\end{figure}

More precisely, begin with a triple $\{\lambda,\rho,\tau\}$, regarded as a
pair $\mu=\{\lambda,\rho\}$ with a slope disk $\tau$ which represents a
tunnel. Choose one of the disks in $\{\lambda,\rho\}$, say $\lambda$, and a
nonseparating slope disk $\tau'$ of the pair $\{\lambda,\tau\}$,
\textit{other than $\rho$.} This is a cabling operation producing the
tunnel $\tau'$ from $\tau$. In terms of the ``swap and tangle'' description
of a cabling, $\lambda$ is dual to the arc of $K_\tau$ that is retained,
and the slope disk $\tau'$ determines a pair of cabling arcs that form the
rational tangle that replaces the arc of $K_\tau$ dual to~$\rho$.

Provided that $\{\lambda,\rho,\tau\}$ was not a primitive triple, we define
the \textit{slope} of this cabling operation to be the
$(\{\lambda,\tau\};\rho)$-slope of~$\tau'$.  When $\{\lambda,\rho,\tau\}$
is primitive, the cabling construction starts with the tunnel of the
trivial knot and produces an upper or lower tunnel of a $2$-bridge knot,
unless $\tau'$ is primitive, in which case it is again the tunnel of the
trivial knot and the cabling is called \textit{trivial.} The slope of a
cabling starting with a primitive triple is defined to be the simple slope
of $\tau'$. The cabling is trivial when the simple slope is~$[0]$.

Since tunnel disks for knot tunnels are nonseparating, the slope invariant
of a cabling construction producing a knot tunnel is of the form $q/p$ with
$q$ odd (or $[p/q]$ with $q$ odd, for a simple slope). In this paper, all
cablings will use nonseparating disks and produce knots. In general, a
cabling construction can also use a separating disk as $\tau'$, which will
produce a tunnel of a tunnel number $1$ link, and no further cabling is
then possible. The slope invariant of such a cabling is defined in the same
way, and has $q$ even.

A nontrivial tunnel $\tau_0$ produced from the tunnel of the trivial knot
by a single cabling construction is called a \textit{simple} tunnel. These
are the well-known ``upper and lower'' tunnels of $2$-bridge knots. Not
surprisingly, the simple slope $m_0$ is a version of the standard rational
parameter that classifies the $2$-bridge knot~$K_{\tau_0}$.

A tunnel is called \textit{semisimple} if it is disjoint from a primitive
disk, but not from any primitive pair. The simple and semisimple tunnels
are exactly the $(1,1)$-tunnels, that is, the upper and lower tunnels of
knots in $1$-bridge position with respect to a standard torus of~$S^3$.
A tunnel is called \textit{regular} if it is neither primitive, simple, or
semisimple.

\section{The tunnel invariants and the principal vertex}
\label{sec:principal}

Theorem~13.2 of~\cite{CMtree} shows that every tunnel of every tunnel
number~$1$ knot can be obtained by a uniquely determined sequence of
cabling constructions. The associated cabling slopes form a
sequence
\[ m_0,\;m_1,\;\cdots\;,\;m_n = [p_0/q_0],\;q_1/p_1,\;\cdots\;,\;q_n/p_n\]
where $m_0\in\Q/\Z$ and each $q_i$ is odd, called the sequence of
\textit{slope invariants} of the tunnel, or just its \textit{slope sequence.}

The unique sequence of cabling constructions producing a tunnel $\tau$
begins with a primitive triple $\{\lambda_{-1},\rho_{-1},\tau_{-1}\}$,
where $\tau_{-1}$ is regarded as the tunnel of the trivial knot. The
cabling constructions produce triples $\{\lambda_i,\rho_i,\tau_i\}$ for
$0\leq i\leq n$, each $\{\lambda_i,\rho_i,\tau_i\}$ is either
$\{\lambda_{i-1},\tau_{i-1},\tau_i\}$ or $\{\tau_{i-1},
\rho_{i-1},\tau_i\}$. The triple $\{\lambda_n,\rho_n,\tau_n\}$ is called
the \textit{principal vertex} of $\tau$. It is called a vertex because it
corresponds to a vertex in the ``tree of knot tunnels''
of~\cite{CMtree}). The uniquenss of the sequence of cabling constructions
producing $\tau$ is really just the fact that in this tree there is a
unique arc between any two vertices--- in this case the unique vertex
corresponding to the primitive triple and the nearest vertex containing
$\tau$--- and each cabling construction corresponds to a step along this
arc.

As we noted in Section~\ref{sec:tunnels}, the principal vertex is dual to a
specific isotopy class of $\theta$-curves in $S^3$. The arc of this
$\theta$-curve that is dual to the tunnel disk furnishes a canonical tunnel
arc representing the tunnel (and the other two arcs form the knot). Indeed,
each of the triples $\{\lambda_i,\rho_i,\tau_i\}$ determines the canonical
tunnel arc representative of the tunnel $\tau_i$.  Geometrically, each
cabling construction along the way produces the canonical tunnel arc of the
resulting tunnel.

There is a second set of invariants associated to a tunnel. Each $m_i$ is
the slope of a cabling that begins with a triple of disks
$\{\lambda_{i-1},\rho_{i-1},\tau_{i-1}\}$ and finishes with
$\{\lambda_i,\rho_i,\tau_i\}$. For $i\geq 2$, put $s_i=1$ if
$\{\lambda_i,\rho_i,\tau_i\}=\{\tau_{i-2},\tau_{i-1},\tau_i\}$,
and $s_i=0$ otherwise. In terms of the swap-and-tangle construction, the
invariant $s_i$ is $1$ exactly when the rational tangle replaces the arc of
the knot that was retained by the previous cabling (for $i=1$, the choice
does not matter, as there is an element of the Goeritz group that preserves
$\tau_0$ and interchanges $\lambda_0$ and $\rho_0$).

A tunnel is simple or semisimple if and only if all $s_i=0$. The reason is
that both conditions characterize cabling sequences in which one of the
original primitive disks is retained in every cabling; this corresponds to
the fact that there exists a tunnel arc for the tunnel (the canonical
tunnel arc, in fact) whose union with one of the arcs of the knot is
unknotted.

\section{Standard tangles}
\label{sec:standard_tangles}

As we have seen, the cabling construction involves the replacement of a
portion of a knot by a rational tangle in the ball $B$. In our later slope
calculations, the rational tangle is positioned in $B$ according to the
picture seen in Figure~\ref{fig:standard_tangle}, which we call a standard
tangle. Notice that a standard tangle is isotopic (keeping endpoints in the
frontier of $B$) to a unique cabling arc. So it
makes sense to speak of the slope of a standard tangle. The next
proposition gives a simple expression for this slope.

\begin{figure}
\labellist
\pinlabel $b_n$ [B] at 120 90
\pinlabel $b_{n-1}$ [B] at 313 90
\pinlabel $b_1$ [B] at 580 90
\pinlabel $a_n$ [B] at 215 50
\pinlabel $a_1$ [B] at 675 50
\endlabellist
\begin{center}
\includegraphics[width=62 ex]{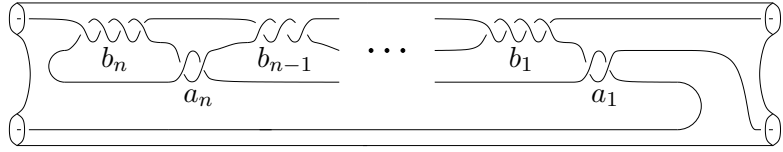}
\caption{A standard tangle of type $(a_1,b_1,\ldots, a_n,b_n)$. For the
$a_i$, each left-hand half twist counts as $+1$, while for the $b_i$, each
right-hand half twist does.}
\label{fig:standard_tangle}
\end{center}
\end{figure}
\begin{proposition} In the coordinates coming from the pair
  $\{\tau,\tau^0\}$ shown in the first drawing in
  Figure~\ref{fig:2bridge_cable}, the standard tangle of type
  $(a_1,b_1,\ldots, a_n,b_n)$ has slope given by the continued fraction
  $[a_1,b_1,\ldots, a_n,b_n]$.\par
\label{prop:slope_of_standard_tangle}
\end{proposition}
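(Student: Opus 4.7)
The plan is to prove this by induction on $n$, building up the standard tangle from the inside out and tracking how each block of half-twists acts on the slope of a cabling arc.

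First I would pin down the action of the two types of half-twists on slopes. A complete pair of $a$-type half-twists is a Dehn twist along one of the slope disks, and a complete pair of $b$-type half-twists is a Dehn twist along the other. Using the coordinates from $\{\tau,\tau^0\}$ described in Section~\ref{sec:slopedisks}, a Dehn twist about $\tau$ sends $(p,q)\mapsto(p,q+p)$ and so shifts the slope $s=q/p$ by $+1$; a Dehn twist about $\tau^0$ sends $(p,q)\mapsto(p+q,q)$ and so sends $s\mapsto s/(1+s)$. By lifting to the universal cover $\widetilde\Sigma$ from Figure~\ref{fig:covering} and working at the level of half-twists (rather than full twists), one sees that the $a$-twists and $b$-twists act on the endpoints of cabling arcs by conjugates of these two transformations, differing by the half-integer translation symmetries of $\widetilde\Sigma$.

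Base case ($n=1$): For a standard tangle of type $(a_1,b_1)$, I would lift a cabling arc to $\widetilde\Sigma$ directly from Figure~\ref{fig:standard_tangle}, and read off the translation vector relating the two endpoint lifts. The $b_1$ right-handed half-twists contribute $b_1$ steps in one lattice direction and the $a_1$ left-handed half-twists contribute $a_1$ steps in the perpendicular direction, yielding slope $a_1+1/b_1=[a_1,b_1]$.

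Inductive step: Assume the standard tangle of type $(a_2,b_2,\ldots,a_n,b_n)$ has slope $s'=[a_2,b_2,\ldots,a_n,b_n]$. The tangle of type $(a_1,b_1,\ldots,a_n,b_n)$ is obtained by wrapping $b_1$ and then $a_1$ outer half-twists around this inner tangle. From the analysis in the first paragraph, the effect of the outer $b_1$-twists on the inner slope $s'$ is $s'\mapsto b_1+1/s'$, and the subsequent $a_1$-twists apply $t\mapsto a_1+1/t$, giving
\[a_1+\cfrac{1}{b_1+\cfrac{1}{s'}}=[a_1,b_1,a_2,b_2,\ldots,a_n,b_n],\]
which closes the induction.

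The main obstacle is Step~1: verifying which Möbius transformation corresponds to a left-handed half-twist versus a right-handed one, in a way that is consistent with the right-hand orientation convention for $\{\tau,\tau^0\}$ and the sign conventions in Figure~\ref{fig:standard_tangle} (where left half-twists count positively for the $a_i$ and right half-twists for the $b_i$). Once the orientations and signs are checked—most cleanly by drawing the relevant lifts of one $a$-twist and one $b$-twist in $\widetilde\Sigma$ and reading off their effect on the fundamental domain—the rest of the argument is a direct continued-fraction recursion.
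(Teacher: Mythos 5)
Your overall strategy is the same as the paper's: identify the action of the two kinds of half-twist blocks on slope coordinates and then iterate from the inside of the tangle outward. The paper encodes this by letting a single half-twist of the bottom (resp.\ right) half of $B$ act as the elementary matrix $U$ (resp.\ $L$) on the column vector of slope coordinates, so the tangle of type $(a_1,b_1,\ldots,a_n,b_n)$ has coordinates $U^{a_1}L^{b_1}\cdots U^{a_n}L^{b_n}\left[\begin{smallmatrix}1\\0\end{smallmatrix}\right]$, and then cites Lemma~14.3 of \cite{CMtree} for the continued-fraction evaluation of that product; your M\"obius-transformation recursion is the same computation.

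There is, however, an internal inconsistency in your inductive step that you should repair. From your own first step, a single $b$-type half-twist sends $s\mapsto s/(1+s)$ and a single $a$-type half-twist sends $s\mapsto s+1$; hence the block of $b_1$ twists acts by $s'\mapsto s'/(1+b_1s')=\dfrac{1}{b_1+1/s'}$, and the block of $a_1$ twists acts by $t\mapsto t+a_1$. You instead assert the actions $s'\mapsto b_1+1/s'$ and $t\mapsto a_1+1/t$, each of which differs from the correct map by an extra inversion. The two spurious inversions cancel, so your composite $a_1+\frac{1}{b_1+1/s'}$ is correct and the induction closes, but neither individual assertion follows from your Step~1 (nor is either true). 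The same looseness appears in your base case: the lifted arc's translation vector after $b_1$ then $a_1$ half-twists is $(b_1,\,a_1b_1+1)$, not ``$b_1$ steps in one direction and $a_1$ in the perpendicular direction,'' i.e., not $(b_1,a_1)$ --- the $a_1$ twists shear the already-wound arc rather than adding independent steps. With the twist actions stated consistently (as $L^{b_1}$ then $U^{a_1}$ on slope coordinates), your argument is exactly the paper's proof.
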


\begin{proof} We write
$U=\begin{bmatrix}1&1\\0&1\end{bmatrix}$ and
$L=\begin{bmatrix}1&0\\1&1\end{bmatrix}$, and
refer to Figure~\ref{fig:2bridge_cable}.
\begin{figure}
\labellist
\pinlabel $\tau^0$ [B] at 158 315
\pinlabel $\tau$ [B] at 230 274
\pinlabel $b_n$ [B] at 703 287
\pinlabel $b_n$ [B] at 540 90
\pinlabel $a_n$ [B] at 633 53
\endlabellist
\begin{center}
\includegraphics[width=66ex]{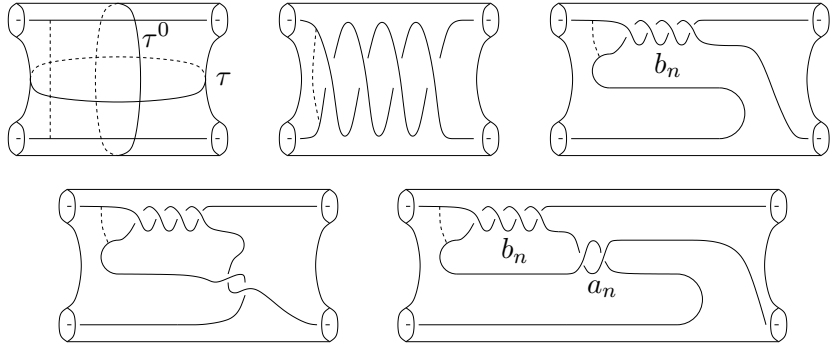}
\caption{Calculating the slope of a standard tangle.}
\label{fig:2bridge_cable}
\end{center}
\end{figure}
The pair of arcs in the first picture of $B$
has slope $\infty$, and performing $b_n$ left-hand half twists of the right
half of $B$ produces the pair in the second picture, which
has slope coordinates $[b_n,1]$. Regarding a pair of slope
coordinates $[p,q]$ as a column vector $\begin{bmatrix} q \\
p\end{bmatrix}$, this is expressed algebraically by the calculation
\[ L^{b_n}\,\begin{bmatrix} 1 \\ 0\end{bmatrix}
= \begin{bmatrix}1&0\\b_n&1\end{bmatrix}\,
\begin{bmatrix} 1 \\  0\end{bmatrix}
= \begin{bmatrix} 1 \\ b_n\end{bmatrix} \ ,\] in which the resulting column
vector gives the slope coordinates of the resulting cable.
Next, we perform $a_n$ right-hand half twists of the bottom half
of $B$. As seen in the third, fourth, and fifth pictures of
Figure~\ref{fig:2bridge_cable}, this moves the arcs to the standard
tangle of type $(a_n,b_n)$. The effect of a right-hand half twist on
slope coordinates is to send $[p, q]$ to $[p, q + p]$, which is the
effect of multiplication by $U$. So the resulting slope coordinates from
$a_n$ twists are
\[ U^{a_n}L^{b_n}\,\begin{bmatrix} 1 \\ 0\end{bmatrix}
= \begin{bmatrix}1&a_n\\0&1\end{bmatrix}\,
\begin{bmatrix} 1 \\  b_n\end{bmatrix}
= \begin{bmatrix} 1 + a_nb_n \\ b_n\end{bmatrix} \ ,\]
and the slope is $a_n+1/b_n=[a_n,b_n]$. An inductive calculation shows
that
\[ U^{a_1}L^{b_1}\cdots U^{a_n}L^{b_n}\,\begin{bmatrix} 1 \\ 0\end{bmatrix}
= \begin{bmatrix} q \\  p\end{bmatrix}
\]
where $q/p=[a_1,b_1,\ldots,a_n,b_n]$ (see \cite[Lemma 14.3]{CMtree}),
verifying the proposition.
\end{proof}

\section{Unwinding rational tangles}
\label{sec:unwinding}

In this section, we introduce two isotopy maneuvers similar to the one in
Section~\ref{sec:last_step}. They reposition a knot that is produced by a
cabling construction on a knot and tunnel in the standard position detailed
in Section~\ref{sec:last_step}. This leads to our first main result,
Theorem~\ref{thm:unwinding}, which tells how performing a cabling
construction on an upper tunnel in standard position changes a braid
description of the $(1,1)$-position.

\begin{figure}
\labellist
\pinlabel $2a_1$ [B] at 310 412
\pinlabel $b_1$ [B] at 253 326
\pinlabel {\large(a)} [B] at 273 -20
\pinlabel {\large(b)} [B] at 943 -20
\endlabellist
\begin{center}
\includegraphics[width=65ex]{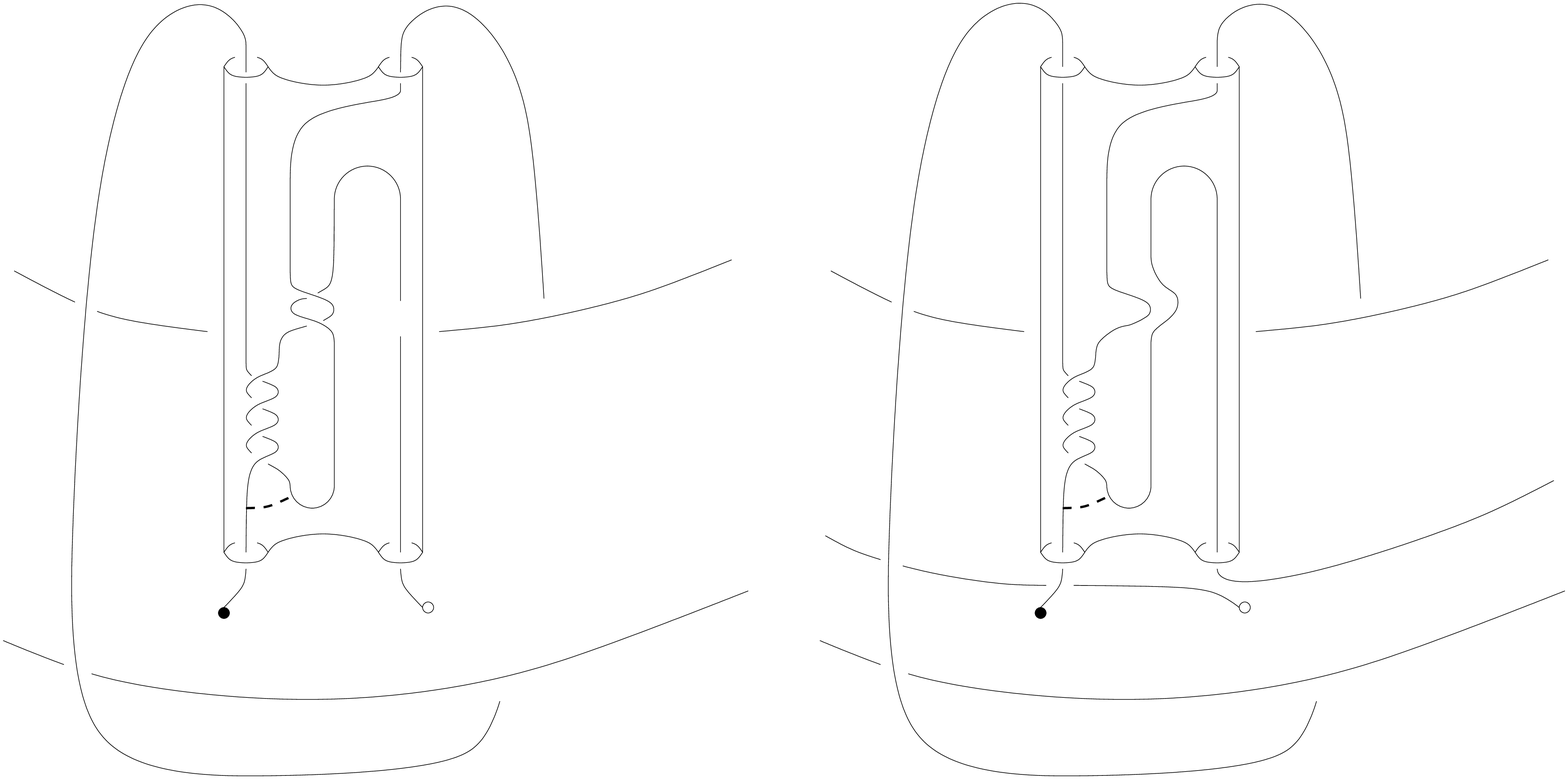}
\caption{Unwinding full twists of the middle two strands.}
\label{fig:unwinding}
\end{center}
\end{figure}
Figure~\ref{fig:unwinding}(a) shows a knot and tunnel produced by a cabling
construction, starting from a tunnel in the standard position seen in
Figure~\ref{fig:last_step}(c). Notice that the original tunnel arc seen in
Figure~\ref{fig:last_step}(c) now appears as an arc of the new knot--- this
is the ``swap'' part of the cabling construction. The arc labeled
$\alpha_W$ in Figure~\ref{fig:last_step}(c), that was an arc of the
original knot, is replaced by a standard tangle in the position shown in
Figure~\ref{fig:unwinding}(a). The new tunnel arc is the dotted arc at the
lower left of the two-bridge configuration. Provided that the original
tunnel arc was the canonical tunnel arc of the original knot, the new
tunnel arc is the canonical tunnel arc of the new knot. This is because the
cablings in the unique sequence producing a tunnel produce the canonical
tunnel arcs for each tunnel (see Section~\ref{sec:principal}).

The knot resulting from this cabling construction depends on the element
$\omega \in \mathcal{B}$, not just on its double coset $\langle
\delta_\ell,\sigma\rangle \omega \langle \delta_m,\sigma\rangle$.

Since the slope of any cabling producing a knot (rather than a
two-component link) is of the form $q/p$ with $q$ odd, Lemma~14.2
of~\cite{CMtree} shows that $q/p$ has a continued fraction expansion of the
form $[2a_1,b_1,2a_2,b_1,\ldots,\allowbreak 2a_n,b_n]$. So we can and will
assume that the standard tangle in Figure~\ref{fig:unwinding} has type of
the form $(2a_1,b_1,2a_2,\ldots,2a_n,b_n)$, and conseqeuntly the middle two
strands in the tangle have only full twists.

The first maneuver unwinds one left-hand full twist of the middle two
strands at the top of the braid, adding a letter $\delta_\ell^{-1}$ at the
beginning of the braid description of the previous knot. During the isotopy, the
knot cuts once across a core circle of $W$. The resulting knot is shown in
Figure~\ref{fig:unwinding}(b). If the twist is right-hand, the isotopy is
similar but a letter $\delta_\ell$ is added.

\begin{figure}
\labellist
\pinlabel {\large(a)} [B] at 260 -15
\pinlabel {\large(b)} [B] at 850 -15
\endlabellist
\begin{center}
\includegraphics[width=60ex]{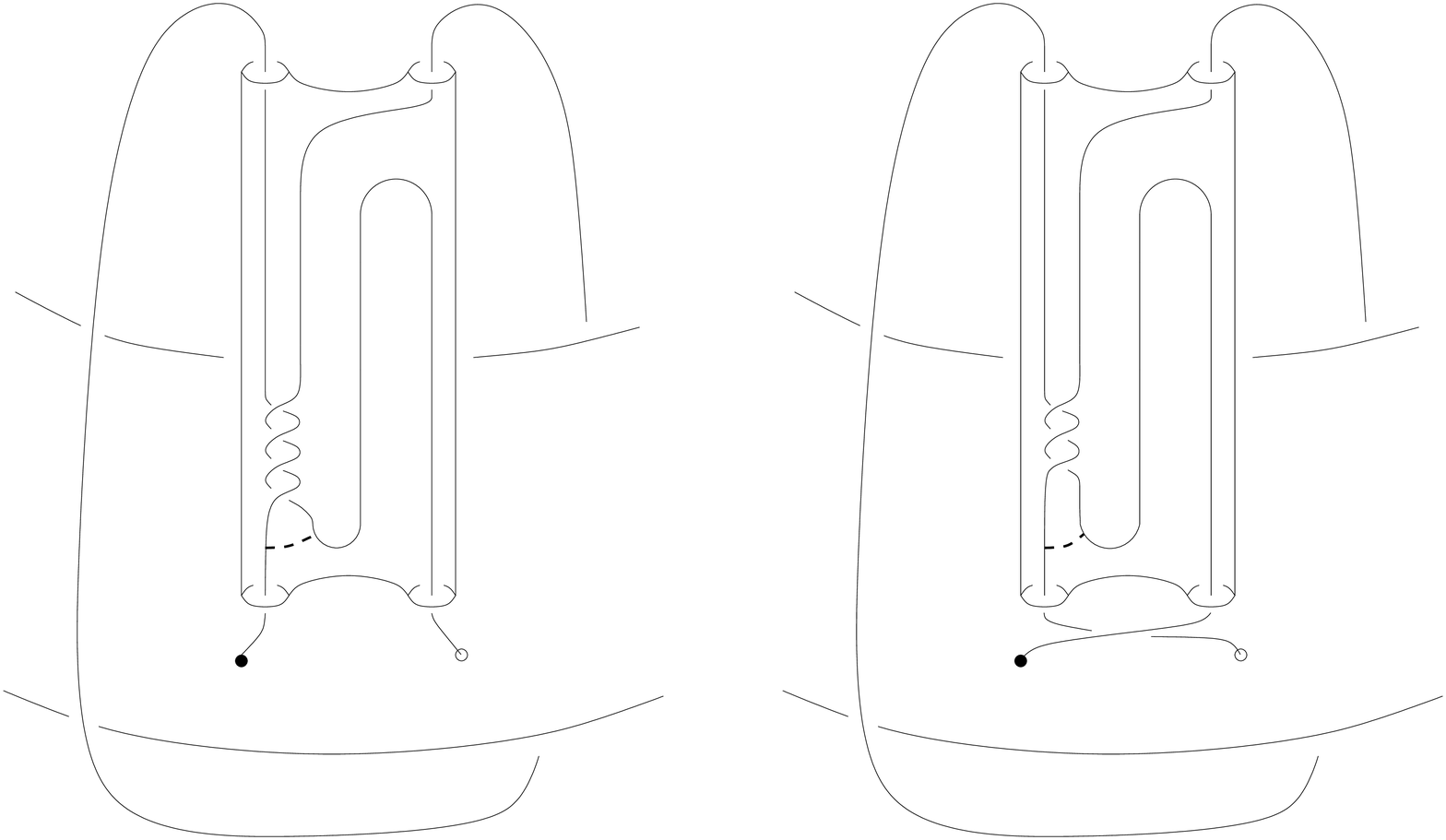}
\caption{Unwinding half twists of the left two strands.}
\label{fig:unwinding_sigma}
\end{center}
\end{figure}
The second maneuver is possible when there are no twists of the middle two
strands at the top of the braid, as in Figures~\ref{fig:unwinding}(b)
and~\ref{fig:unwinding_sigma}(a). It is similar to the first maneuver, but
unwinds half twists of the left two strands at the expense of an initial
powers of $\sigma$ to $\omega$.  During the isotopy, the knot need not pass
through a core circle of $W$; it can be fixed outside a small neighborhood
of the ball $B$ that contains the standard tangle. As seen in
Figure~\ref{fig:unwinding_sigma}(b), unwinding a single half twist adds an
initial letter $\sigma$ or $\sigma^{-1}$ to the braid description,
according as the half-twist is right-handed or left-handed.

If there are additional twists of the middle or left strands lying below
those shown in Figure~\ref{fig:unwinding}, they can be unwound by repeating
the previous two maneuvers. Thus the sequence of full twists of
the middle two strands and half twists of the left two strands unwinds to add
$\sigma^{b_n}\delta_\ell^{-a_n}\cdots \sigma^{b_1}\delta_\ell^{-a_1}$ at
the start of the braid description. The knot and tunnel are then in the position
in Figure~\ref{fig:last_step}(a), and the maneuver of
Section~\ref{sec:last_step} puts the knot and tunnel into the standard
position of Figure~\ref{fig:last_step}(c), adding $\delta_m\sigma$ to the
front of the braid description. This establishes our first main result:
\begin{theorem}[Unwinding Theorem] Suppose that a $(1,1)$-knot $K$ and its upper
$(1,1)$-tunnel are in standard position with braid description $w\in
  \mathcal{B}$. Perform a cabling construction that introduces a standard
  tangle of type $(2a_1,b_1,\ldots, 2a_n,b_n)$, positioned as shown in
  Figure~\ref{fig:unwinding}. Then using $(1,1)$-isotopy, the new knot and
  tunnel can be put into standard position with braid description
\[\delta_m\sigma\cdot \sigma^{b_n}\delta_\ell^{-a_n}\sigma^{b_{n-1}}\cdots
\sigma^{b_1}\delta_\ell^{-a_1}\cdot w\ .\]
\label{thm:unwinding}
\end{theorem}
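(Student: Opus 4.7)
The plan is to assemble the theorem as an iteration of three geometric maneuvers already described in the paper, reading off at each stage which braid letter is introduced at the \emph{front} of the braid word. Since the final braid word is built by successive premultiplications, I need to be careful that the maneuvers are applied in the order that matches the stated product, and that each one really does change the braid by the claimed letter.

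First I would record the starting picture: after performing the cabling construction introducing a standard tangle of type $(2a_1,b_1,\ldots,2a_n,b_n)$ on the knot and upper tunnel in standard position, the resulting knot and tunnel look like Figure~\ref{fig:unwinding}(a); the original tunnel arc has become an arc of the new knot (the ``swap''), and the former arc $\alpha_W$ has been replaced by the standard tangle. Because $(1,1)$-isotopy is the only tool used, the underlying braid description $w$ of the old knot persists below the tangle. Next I would invoke the fact from Proposition~\ref{prop:slope_of_standard_tangle} together with Lemma~14.2 of~\cite{CMtree} to ensure that the continued fraction form $[2a_1,b_1,\ldots,2a_n,b_n]$ is actually available, so that the middle strands carry only \emph{full} twists; this is what makes the first maneuver applicable.

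Then I would run the two isotopy maneuvers from the section, treating them as a loop: each left-handed full twist of the middle two strands is unwound using the first maneuver, crossing the core circle of $W$ once and prepending $\delta_\ell^{-1}$ (a right-handed one prepends $\delta_\ell$); once those twists are gone, each half twist of the left two strands is unwound using the second maneuver, prepending $\sigma^{\pm1}$ without crossing the core of $W$. Doing this for the innermost twisting box $(2a_n,b_n)$ first premultiplies $w$ by $\sigma^{b_n}\delta_\ell^{-a_n}$, and then iterating through the remaining boxes from the bottom of the tangle upward premultiplies by $\sigma^{b_{n-1}}\delta_\ell^{-a_{n-1}}$, and so on, until one reaches $\sigma^{b_1}\delta_\ell^{-a_1}$ at the outside. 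The key bookkeeping point is that every new unwinding step inserts its letters immediately to the \emph{left} of whatever has already been accumulated, so the final accumulated prefix is
\[
\sigma^{b_n}\delta_\ell^{-a_n}\sigma^{b_{n-1}}\cdots\sigma^{b_1}\delta_\ell^{-a_1}\,.
\]
At this stage the knot and new tunnel are in the configuration of Figure~\ref{fig:last_step}(a).

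Finally I would apply the maneuver of Section~\ref{sec:last_step}, which takes the picture in Figure~\ref{fig:last_step}(a) to the standard position of Figure~\ref{fig:last_step}(c) at the cost of premultiplying the braid description by $\sigma^{-1}\delta_m^{-1}=\delta_m\sigma$ (the equality holding in $\mathcal{B}$ by Proposition~\ref{prop:braid_group_presentation}). Premultiplying the prefix already built by $\delta_m\sigma$ gives the asserted braid description. The main obstacle is not a computation but a geometric one: verifying that the maneuvers can legitimately be iterated, i.e.\ that after unwinding the $(2a_n,b_n)$ box the remaining picture is geometrically of the same form as Figure~\ref{fig:unwinding}(a) with tangle type $(2a_1,b_1,\ldots,2a_{n-1},b_{n-1})$, so the same two maneuvers apply again. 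This is essentially a direct inspection of the pictures in Figures~\ref{fig:unwinding} and~\ref{fig:unwinding_sigma}, together with the observation that the ``outside'' of the standard tangle is unaffected by unwinding its innermost twists.
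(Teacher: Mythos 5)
Your outline is the paper's own proof: the cabling picture of Figure~\ref{fig:unwinding}(a), the continued-fraction normalization so the middle strands carry only full twists, the two unwinding maneuvers iterated, and the final maneuver of Section~\ref{sec:last_step} contributing $\delta_m\sigma=\sigma^{-1}\delta_m^{-1}$. However, the order in which you iterate the unwinding is reversed, and in this theorem the order is essentially the whole content. The first maneuver applies only to full twists of the middle strands \emph{at the top of the braid}, and in the positioning of Figure~\ref{fig:unwinding}(a) those are the $2a_1$ twists; the block $(2a_n,b_n)$ sits at the bottom of the tangle and cannot be touched until every block above it has been cleared. So the unwinding must proceed $2a_1,\,b_1,\,2a_2,\,\ldots,\,2a_n,\,b_n$, not the other way around.

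This is not merely a labelling quibble, because your own bookkeeping rule (each newly created letter is prepended to the \emph{entire} word accumulated so far) applied to your stated order --- $(2a_n,b_n)$ first, then $(2a_{n-1},b_{n-1})$, ending with $(2a_1,b_1)$ --- yields
\[
\sigma^{b_1}\delta_\ell^{-a_1}\cdots\sigma^{b_n}\delta_\ell^{-a_n}\cdot w\,,
\]
the reverse of the prefix you then assert. The consistent derivation is: unwinding the $2a_1$ twists first prepends $\delta_\ell^{-a_1}$, the $b_1$ half twists then prepend $\sigma^{b_1}$ to the left of that, and so on, with $b_n$ unwound last and therefore landing leftmost; this is what produces $\sigma^{b_n}\delta_\ell^{-a_n}\cdots\sigma^{b_1}\delta_\ell^{-a_1}$ before the final premultiplication by $\delta_m\sigma$. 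With the iteration order corrected, the rest of your argument (including the remark that unwinding a block leaves a configuration of the same form with one fewer block, so the maneuvers can be repeated) is exactly the paper's.
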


\section{The Slope Theorem}
\label{sec:rho0}

To calculate the slope invariant of a cabling as in
Figure~\ref{fig:unwinding}, we must first find the slope-zero perpendicular
disk $\rho^0$ of $\rho$, where $\rho$ is the slope disk in
Figure~\ref{fig:rho0link} (a).  Then we determine the $\{\rho,
\rho^0\}$-slope of the standard tangle of type
$(2a_1,b_1,\ldots,2a_n,b_n)$.  In this section, we will carry these out,
leading to our second main result, Theorem~\ref{thm:winding}. It gives a
simple expression for the slope of a cabling construction of the type
considered in Theorem~\ref{thm:unwinding}. The expression involves an
integer that counts the number of turns the knot makes around the solid
torus $T\times I\cup V$. Definition~\ref{def:algebraic_winding_number} and
Proposition~\ref{prop:algebraic_winding_number} will show how to compute
this integer from a braid description of the $(1,1)$-position.

\begin{figure}
\labellist
\pinlabel $\rho$ [B] at 261 310
\pinlabel {$t$ times} [B] at 470 160
\pinlabel {\large(a)} [B] at 264 0
\pinlabel {\large(b)} [B] at 841 0
\endlabellist
\begin{center}
\includegraphics[width=67ex]{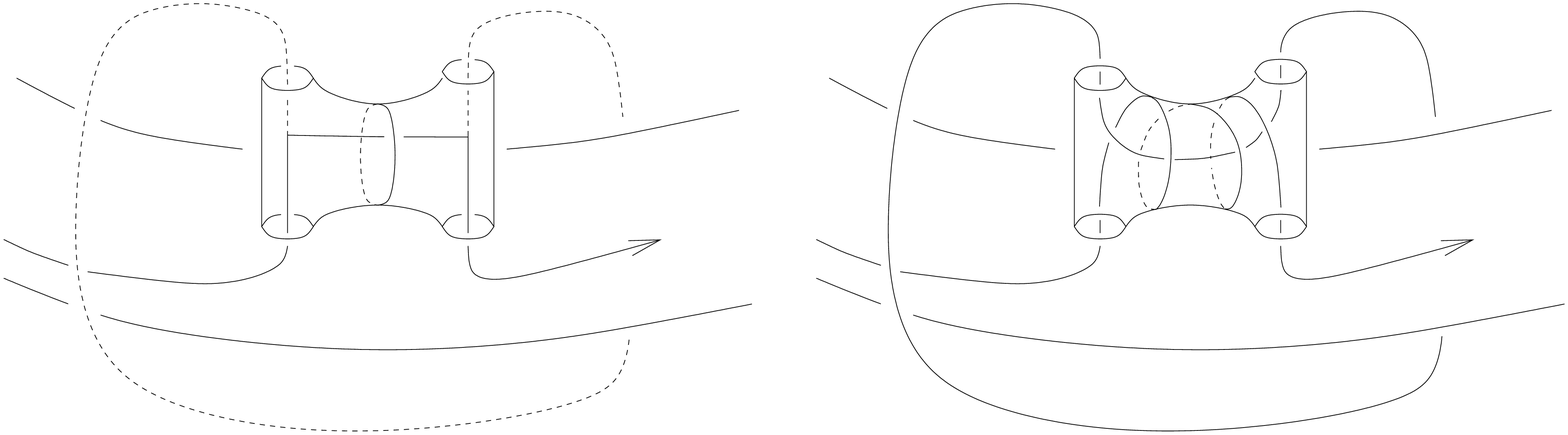}
\caption{Finding the slope-zero perpendicular disk $\rho^0$. The first drawing
shows $\rho$ and indicates the direction of algebraically positive
winding of the knot around $V$. The second shows the $0$-linking pair
for $\rho^0$, for the case $t=3$. The corresponding disk $\rho^0$ appears
in Figure~\ref{fig:rho0}(b).}
\label{fig:rho0link}
\end{center}
\end{figure}

Assuming that the upper tunnel of $K$ is in standard position as in
Figure~\ref{fig:last_step}(c), we always choose the orientation on $K$
to be directed over the top arch $\alpha_W$ from the black point to the
white point. The \textit{algebraic winding number} for this position of $K$
is defined to be the net number of turns that $K$ makes in
the direction of positive
orientation on the longitude $\ell$ of $T\times I\cup V$
(the direction indicated in Figure~\ref{fig:rho0link}(a)), that is, the
algebraic intersection of $K$ with a meridian disk of $T\times I\cup V$
bounded by the loop~$m$.
\begin{definition}\label{def:algebraic_winding_number}
For $\omega\in \mathcal{B}$, define $t(\omega)\in \Z$ as follows. For each
appearance of $\delta_\ell^{\epsilon}$ ($\epsilon=\pm1$) in $\omega$, write
$\omega=\omega_1\delta_\ell^{\epsilon}\omega_2$, and let $k$ be the total
exponent of $\sigma$ in $\omega_1$. Assign the value $(-1)^{k+1}\epsilon$
to this appearance of $\delta_\ell^\epsilon$, and sum these over all
appearances to give $t(\omega)$.
\end{definition}

\begin{proposition}\label{prop:algebraic_winding_number}
If $K=K(\omega)$, then $t(\omega)$ equals the algebraic winding number of
$K$.
\end{proposition}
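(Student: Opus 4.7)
The plan is to compute the algebraic winding number of $K(\omega)$ directly as a signed intersection number with a meridian disk, and match it term-by-term with Definition~\ref{def:algebraic_winding_number}. First I would take the meridian disk $D_m$ of the solid torus $V\cup T\times I$ (bounded by $m$) in a position that meets $T\times I$ in the annulus $m\times I$ and is disjoint from the capping arcs $\alpha_W$ and $\alpha_V$, so that only the two braid strands contribute to the intersection number $[K]\cdot[D_m]$. Let $A$ be the strand of the braid starting at the black point on top and $B$ the strand starting at the white point on top, and write $d(A), d(B)$ for their signed intersection numbers with $m\times I$ when given the downward (top-to-bottom) orientation. Because the chosen orientation of $K$ sends it over $\alpha_W$ from black to white, $K$ traverses $B$ downward and $A$ upward regardless of how the braid permutes the strands at the bottom and how $\alpha_V$ reconnects them. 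Thus $W(K(\omega)) = d(B) - d(A)$.

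Next I would induct on the length of a word $g_1\cdots g_n$ representing $\omega$, tracking at each height which of $A, B$ occupies the ``black position'' (the position occupied by the black point at $t=0$): it is $A$ after an even total $\sigma$-exponent in the prefix and $B$ after an odd one. I would fix geometric representatives in which $\sigma$ is a half-twist in a small disk of $T$ disjoint from $m$, $\delta_m^{\pm1}$ is a loop around $m$ meeting $m\times I$ algebraically zero times, and $\delta_\ell^{\pm1}$ is a loop around $\ell$ that crosses $m\times I$ once with sign equal to its exponent. With these choices, appending $\delta_m^{\pm1}$ or $\sigma^{\pm1}$ leaves both $d(A)$ and $d(B)$ unchanged, while appending $\delta_\ell^{\epsilon}$ adds $\epsilon$ to whichever of $d(A), d(B)$ is in the black position at that height. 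The resulting change in $d(B) - d(A)$ is $-\epsilon$ when the prefix $\sigma$-exponent $s$ is even and $+\epsilon$ when $s$ is odd, i.e.\ exactly $(-1)^{s+1}\epsilon$, matching the contribution assigned to this occurrence of $\delta_\ell^\epsilon$ in the definition of $t(\omega)$. The base case $\omega=1$ gives $W=0=t(1)$ after pushing the trivial strands off $D_m$.

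The main technical point is selecting the geometric representatives so that the bookkeeping is unambiguous: since $b$ itself lies on $m$, a small perturbation is needed before intersection signs of $\sigma$ and $\delta_m^{\pm1}$ with $m\times I$ can be assigned, and one must check that after this perturbation they indeed contribute $0$ while $\delta_\ell^{\pm1}$ contributes precisely its exponent. Once that is in place the induction is a routine signed count. A pleasant byproduct, though not needed for the statement, is that $t$ descends to a well-defined function on $\mathcal{B}$, which is not evident from its word-level definition.
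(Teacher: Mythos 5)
Your argument is correct and is essentially the paper's own proof, recast as an explicit induction: the paper likewise observes that only the $\delta_\ell^{\pm1}$ letters meet the meridian disk, that each contributes $\pm1$ according to which strand carries the turn, and that the parity of the preceding total $\sigma$-exponent governs the sign, reading the base sign off Figure~\ref{fig:braids}(a) just as you fix it by a choice of geometric representative. Your bookkeeping with $d(A)$, $d(B)$ and the ``black position'' is a more formal rendering of the same count, and your closing remark that this shows $t$ descends to $\mathcal{B}$ is indeed used implicitly later in the paper.
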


\begin{proof}
For our designated orientation on $K$ and choice of direction of positive
winding, an initial letter $\delta_\ell$ in $\omega$ as in the example of
Figure~\ref{fig:braids}(a) would contribute $-1$ to the algebraic winding
number of $K$. If it were $\delta_\ell^{-1}$ it would contribute $+1$.
When $\delta_\ell^\epsilon$ is not the initial letter, each of the
appearances of $\sigma$ preceding an appearance of $\delta_\ell^\epsilon$
in $\omega$ reverses the direction in which the orientation of $K$ is
directed around the turn corresponding to this $\delta_\ell^\epsilon$
term. So if there are $k$ such appearances of $\sigma$, this appearance of
$\delta_\ell^\epsilon$ contributes $(-1)^k(-\epsilon)=(-1)^{k+1}\epsilon$
to the algebraic winding number. Apart from this effect of $\sigma$ on the
signs of these terms, the appearances of $\delta_m$ and $\sigma$ in
$\omega$ make no contribution to the algebraic winding number.
\end{proof}

We can now state our second main result.
\begin{theorem}[Slope Theorem]\label{thm:winding}
Let $K=K(\omega)$ be a knot in braid position with upper tunnel in standard
position as shown in Figure~\ref{fig:last_step}(c). Suppose that a cabling
construction as in Figure~\ref{fig:unwinding} is performed using a standard
tangle of type $(2a_1,b_1,\ldots,2a_n,b_n)$. Then the slope of the cabling
is given by the continued fraction $[2t(\omega)+2a_1,b_1,2a_2,\ldots,
  b_n]$.
\end{theorem}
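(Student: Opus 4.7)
The plan is to compute the cabling slope in two stages: first in a natural ``local'' coordinate system coming from the ball $B$ that contains the standard tangle in Figure~\ref{fig:unwinding}, and then correct for the change of coordinates needed to pass to the slope-zero perpendicular disk $\rho^0$ required by the definition of cabling slope.

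For the first stage, let $\rho^{0,\mathrm{loc}}$ denote the perpendicular disk for $\rho$ that appears naturally inside $B$, playing the role of $\tau^0$ in Figure~\ref{fig:2bridge_cable}. Proposition~\ref{prop:slope_of_standard_tangle} then gives the $\{\rho,\rho^{0,\mathrm{loc}}\}$-slope of the standard tangle of type $(2a_1,b_1,\ldots,2a_n,b_n)$ as the continued fraction $[2a_1,b_1,\ldots,2a_n,b_n]$. For the second stage, I would compare $\rho^{0,\mathrm{loc}}$ with the genuine $\rho^0$. Both are perpendicular disks of $\rho$, so they differ by some integer number of Dehn twists of $H$ about~$\rho$; the heart of the theorem is the claim that this integer is exactly $t(\omega)$. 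I would establish this by computing the linking number in $S^3$ of the two cores of the decomposition of $H$ cut along $\rho^{0,\mathrm{loc}}$ and comparing with the defining condition for $\rho^0$ (zero linking). Cutting $H$ along $\rho^{0,\mathrm{loc}}$ produces two solid tori whose cores are, respectively, a push-off of the arc dual to $\lambda$ (the portion of $K$ inside $T\times I\cup V$) and a push-off of the union of the tunnel arc and the arc dual to $\rho$. The linking of these two cores in $S^3$ is exactly the net number of full turns that $K(\omega)$ makes around the longitude $\ell$ of $T\times I\cup V$; by Proposition~\ref{prop:algebraic_winding_number} this number equals $t(\omega)$, and the picture in Figure~\ref{fig:rho0link}(b) for $t=3$ makes this geometrically transparent.

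Finally, I would convert the $t(\omega)$-twist correction into a shift in continued fractions. In the $\{\rho,\cdot\}$-coordinates of Section~\ref{sec:slopedisks}, a single Dehn twist of the perpendicular disk about $\rho$ changes the slope of every slope disk by exactly~$2$; the factor of $2$ reflects the $2$-to-$1$ nature of the cover $\widetilde{\Sigma}\to\Sigma$, or equivalently is forced because a Dehn twist must take $[p,q]$-slope disks to $[p,q']$-slope disks with $q'$ of the same parity as~$q$ (since nonseparating disks have $q$ odd). Consequently, twisting $\rho^{0,\mathrm{loc}}$ by $t(\omega)$ to obtain $\rho^0$ shifts the cabling slope by $2t(\omega)$, so that
\[ [2a_1,b_1,\ldots,2a_n,b_n] + 2t(\omega) = [2a_1+2t(\omega),\,b_1,\,2a_2,\,\ldots,\,b_n] \]
is the slope of the cabling, as desired. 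The main obstacle is the geometric identification in the middle stage, and in particular getting the sign and the factor of~$2$ correct via the orientation conventions of Sections~\ref{sec:braid_group} and~\ref{sec:slopedisks}; once this is in hand, the rest reduces to invoking Propositions~\ref{prop:slope_of_standard_tangle} and~\ref{prop:algebraic_winding_number} together with elementary continued-fraction arithmetic.
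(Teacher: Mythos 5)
Your proposal is correct and follows essentially the same route as the paper: it identifies the twisting needed to pass from the naturally positioned perpendicular disk (the paper's $\rho^\perp$, your $\rho^{0,\mathrm{loc}}$) to the slope-zero disk $\rho^0$ via the linking-number/algebraic-winding-number computation of Proposition~\ref{prop:algebraic_winding_number}, and then converts the $t(\omega)$ twists into a shift of $2t(\omega)$ in the first continued-fraction entry. The only cosmetic difference is that the paper realizes this last step by applying $u^{-t}$ to the tangle and re-invoking Proposition~\ref{prop:slope_of_standard_tangle}, whereas you argue directly that each full Dehn twist about $\rho$ shifts slopes by $2$; these are the same calculation.
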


\begin{proof}
Figure~\ref{fig:rho0link}(b) shows the link formed by the core circles of
the solid tori into which the slope disk in Figure~\ref{fig:rho0}(b) will
cut a handlebody neighborhood of the union of the knot and the tunnel. The
lower component is $(1,1)$-isotopic to $K$, while the upper component is a
core circle of the solid torus $W$. Let $t$ be the number of full
left-handed twists of the right half of $B$ needed to change the disk
$\rho^\perp$ in Figure~\ref{fig:rho0}(a) to the disk in
Figure~\ref{fig:rho0}(b). Recalling that the algebraic winding number of
$K$ is its algebraic intersection number with a meridian disk of $T\times
I\cup V$, we see that the linking number of the lower component with the
upper component is $t$ less than the algebraic winding number of $K$. If
we choose $t$ to equal this algebraic winding number, then the linking
number will be $0$, and therefore the disk in Figure~\ref{fig:rho0}(b) will
be the canonical zero-slope disk $\rho^0$. According to
Proposition~\ref{prop:algebraic_winding_number}, the algebraic winding
number of $K$ is $t(\omega)$, so the condition is that $t=t(\omega)$.
\begin{figure}
\labellist
\pinlabel $\tau_+$ [B] at -11 136
\pinlabel $\tau_-$ [B] at 180 136
\pinlabel $\tau_+$ [B] at 245 136
\pinlabel $\tau_-$ [B] at 435 136
\pinlabel $\rho$ [B] at 82 126
\pinlabel $\rho^\bot$ [B] at 180 80
\pinlabel $\rho^0$ [B] at 435 70
\pinlabel $\lambda_+$ [B] at -11 30
\pinlabel $\lambda_-$ [B] at 180 30
\pinlabel $\lambda_+$ [B] at 245 30
\pinlabel $\lambda_-$ [B] at 435 30
\pinlabel {\large(a)} [B] at 85 0
\pinlabel {\large(b)} [B] at 340 0
\endlabellist
\begin{center}
\includegraphics[width=58ex]{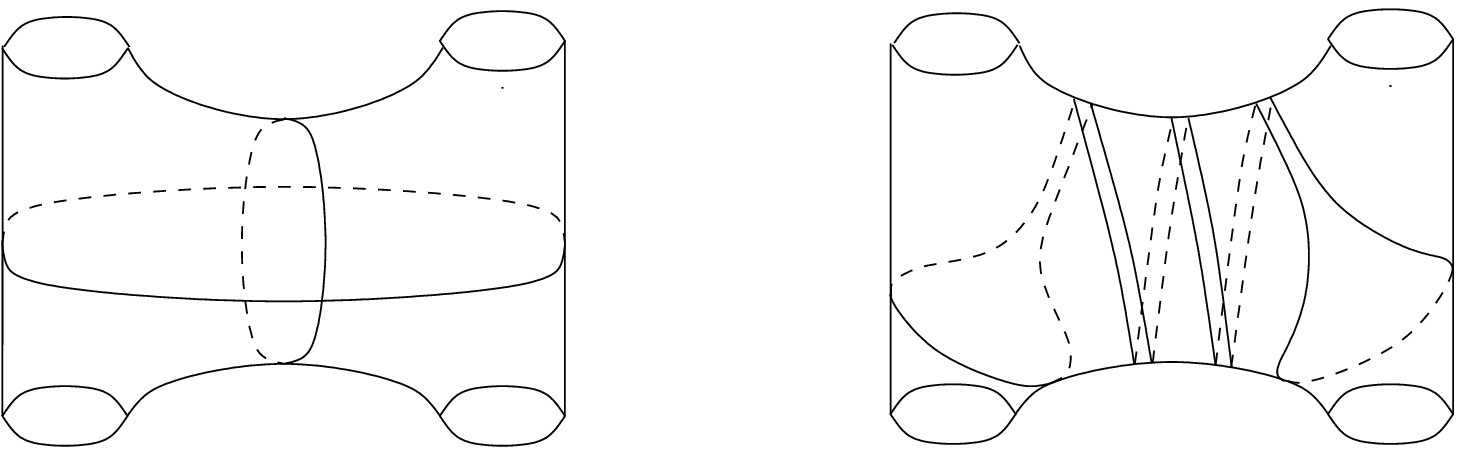}
\caption{A perpendicular disk $\rho^\perp$ and the
slope-zero perpendicular disk $\rho^0$ for the case $t=3$.}
\label{fig:rho0}
\end{center}
\end{figure}

Now consider a standard tangle $K_0$ of type $(2a_1,b_1,\ldots, 2a_n,b_n)$,
as shown in Figure~\ref{fig:standard_tangle}. Regard it as contained in
the portion of the handlebody shown in Figure~\ref{fig:rho0}, as in
Figure~\ref{fig:unwinding}.
Proposition~\ref{prop:slope_of_standard_tangle} gives the slope of
$K_0$ with respect to the pair $\{\rho,\rho^\perp\}$ in
Figure~\ref{fig:rho0}(a) to be $[2a_1,b_1,\ldots, 2a_n,b_n]$. We denote this
slope by $m(K_0,\{\rho,\rho^\perp\})$, and by $m(K_0,\{\rho,\rho^0\})$ the
slope with respect to $\{\rho,\rho^0\}$.

Let $u$ denote a full left-hand twist of the right-hand side of the ball
in Figure~\ref{fig:rho0}. We have already seen that
$u^t(\rho^\perp)=\rho^0$, and we note also that $u^t(\rho)=\rho$. In
the view of Figure~\ref{fig:standard_tangle}, $u$ is a full left-hand
twist of the bottom half of $B$, so $u^{-t}$ moves $K_0$ to the standard
tangle of type $(2t+2a_1,b_1,2a_2,b_2,\ldots, 2a_n,b_n)$. We can now
compute the slope of the cabling as
\begin{gather*}
m(K_0,\{\rho,\rho^0\}) = m(K_0,\{\rho,u^t(\rho^\perp)\})
= m(K_0,\{u^t(\rho),u^t(\rho^\perp)\})\\
= m(u^{-t}(K_0),\{\rho,\rho^\perp\}) = [2t+2a_1,b_1,\ldots,2a_n,b_n]
\end{gather*}
where Proposition~\ref{prop:slope_of_standard_tangle} gives the final equality.
\end{proof}

\begin{example}\label{ex:example}
Figure~\ref{fig:figure_10} shows the knot of Figure~9 of~\cite{CMtree}
moved into $(1,1)$-position. The upper right-hand drawing is the original
knot and its upper and lower tunnels $\tau_1$ and $\tau_2$. In the
$(1,1)$-position in Figure~9 of~\cite{CMtree}, $\tau_2$ is the upper
tunnel. From the bottom-right drawing, we read off a braid description of
$\tau_2$ as
$\omega=\delta_m^{-1}\sigma^{-1}\delta_\ell\delta_m^{-1}\sigma^3
\delta_\ell^{-1}=\sigma\delta_m\delta_\ell\delta_m^{-1}\sigma^3\delta_\ell^{-1}\sim
\delta_m\delta_\ell\delta_m^{-1}\sigma^3\delta_\ell^{-1}$.
\begin{figure}
\labellist
\pinlabel $\tau_1$ [B] at 66 128
\pinlabel $\tau_2$ [B] at 66 188
\pinlabel $\tau_1$ [B] at 194 153
\pinlabel $\tau_2$ [B] at 223 176
\pinlabel $\tau_1$ [B] at 60 17
\pinlabel $\tau_2$ [B] at 61 70
\pinlabel $\tau_1$ [B] at 200 12
\pinlabel $\tau_2$ [B] at 199 75
\endlabellist
\begin{center}
\includegraphics[width=65 ex]{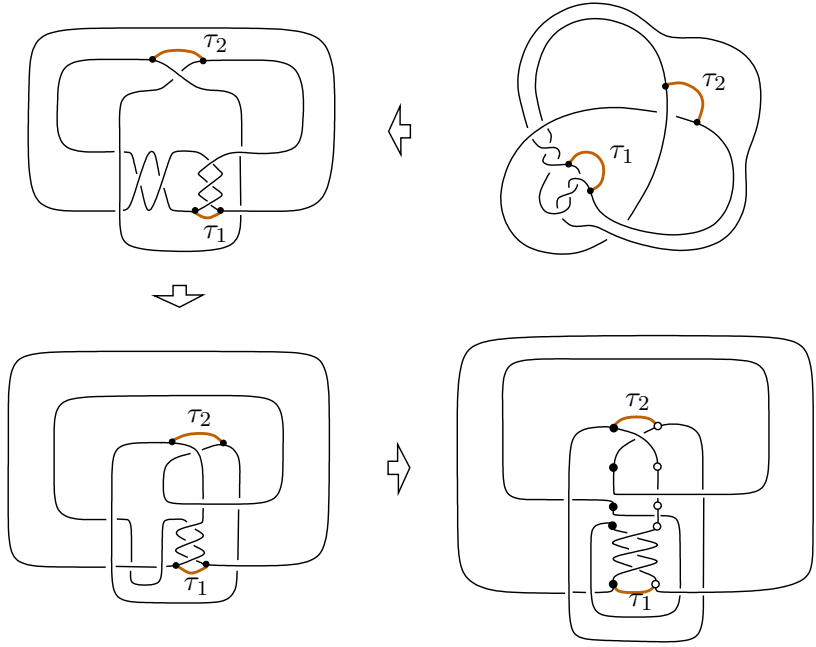}
\caption{Putting a knot into $(1, 1)$-position}
\label{fig:figure_10}
\end{center}
\end{figure}

To compute the slope invariants for
the tunnel $\tau_2$, we use the relation
$\delta_m^{-1}=\sigma\delta_m\sigma$ to put $\omega$ into the form
\[\delta_m\sigma\cdot \omega_1(\delta_\ell,\sigma)\cdot \delta_m\sigma\cdot
\omega_0(\delta_\ell,\sigma)=\delta_m\sigma\cdot
\sigma^{-1}\delta_\ell\sigma \cdot \delta_m\sigma\cdot
\sigma^3\delta_\ell^{-1}\ .\] We now use Theorems~\ref{thm:unwinding}
and~\ref{thm:winding} to read off the slopes. The first cabling starts from
the trivial knot $K$, which has algebraic winding number $t(1)=0$. Since
the cabling corresponds to the portion $\delta_m\sigma\cdot
\omega_0(\delta_\ell,\sigma)=\delta_m\sigma\cdot \sigma^3\delta_\ell^{-1}$,
Theorem~\ref{thm:unwinding} shows that the standard tangle used in the
cabling is of type $(2a_1,b_1)=(2,3)$.  By Theorem~\ref{thm:winding}, the
ordinary slope of the first cabling is given by the continued fraction
$[0+2,3]=7/3$, so the simple slope is $[3/7]$ in $\Q/\Z$.  The second cabling begins
with this knot, so has algebraic winding number $t(\delta_m\sigma\cdot
\sigma^3\delta_\ell^{-1})= (-1)^{4+1}\cdot (-1)=1$.  From
Theorem~\ref{thm:unwinding}, we have $a_1=0$, $b_1=1$, $a_2=-1$, and
$b_2=-1$, so by Theorem~\ref{thm:winding} the second cabling slope is given
by the continued fraction $[2+0,1,-2,-1]=7/2$. Therefore the slope sequence
of $\tau_2$ is $[3/7], 7/2$.

The tunnel $\tau_1$ is the upper tunnel of the $(1,1)$-position
described by the reverse braid of~$\omega$,
\[ \delta_m^{-1} \sigma^3\delta_\ell^{-1}\delta_m\delta_\ell\sim
\delta_m\sigma\cdot \sigma^3\delta_\ell^{-1}\cdot \delta_m\sigma\cdot
\sigma^{-1}\delta_\ell\ ,\] giving the first slope of $\tau_1$ to be
$[0+(-2),-1]=-3$ and consequently its simple slope to be $[-1/3]=[2/3]\in \Q/\Z$.
For the second slope, we have
$t(\delta_m\delta_\ell)=(-1)^{0+1}\cdot 1=-1$, $a_1=1$, and $b_1=3$, giving
the slope $[2\cdot(-1)+2,3]=1/3$. Therefore the cabling slope
sequence of the lower tunnel is $[2/3],1/3$.
\end{example}

\section{Tunnels of $2$-bridge knots}
\label{sec:2bridge}

In this section we will give braid descriptions of the tunnels of
$2$-bridge knots, and use them to calculate the slope invariants. We
obtain, of course, the same values as in the calculation of \cite[Section
15]{CMtree}. In Theorem~\ref{thm:slope_sequence_characterization} we give
a characterization of which sequences of rational numbers (with initial
term in $\Q/\Z$) occur as slope sequences of tunnels of $2$-bridge knots.

A convenient reference for the tunnels of $2$-bridge knots is K. Morimoto
and M. Sakuma~\cite[Section (1.6)]{M-S}. In \cite[Section (1.1)]{M-S}, the
authors give a definition of \textit{dual} tunnels, and the discussion in
\cite[Section (1.2)]{M-S} shows that two tunnels of a knot in $S^3$ are
dual exactly when they are the upper and lower tunnels for the same
$(1,1)$-position of the knot. As we saw in Section~\ref{sec:braid_group},
the dual of a tunnel given by a braid description $\omega$ has a braid
description by the reverse braid of $\omega$.

\begin{figure}
\labellist
\pinlabel $U$ [B] at -10 45
\pinlabel $US$ [B] at 66 83
\pinlabel $L$ [B] at 376 50
\pinlabel $LS$ [B] at 354 78
\pinlabel $2a_1$ [B] at 100 45
\pinlabel $2a_2$ [B] at 198 45
\pinlabel $b_1$ [B] at 150 12
\pinlabel $b_n$ [B] at 335 12
\endlabellist
\begin{center}
\includegraphics[width=58 ex]{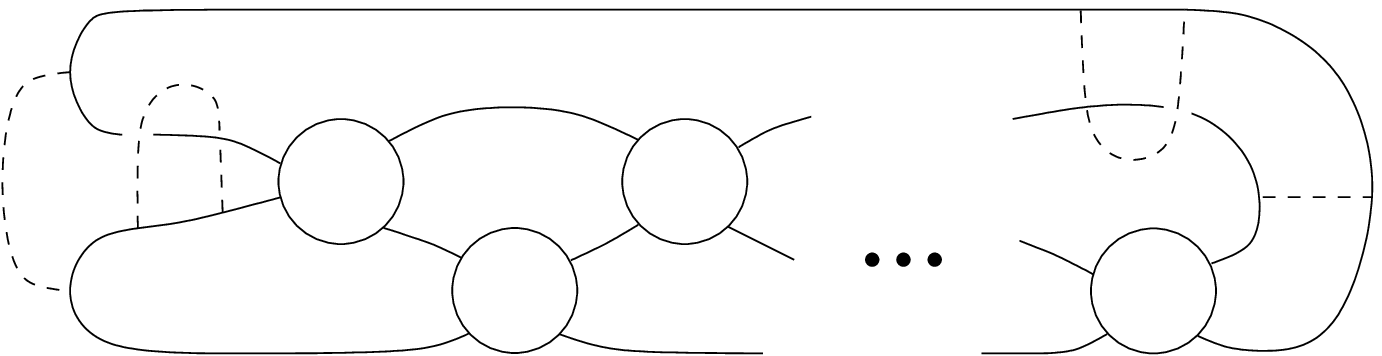}
\caption{A $2$-bridge knot with classifying invariant given by the
continued fraction
$1/[2a_1,b_1,2a_2,\ldots,2a_n,b_n]$. The upper and lower tunnels
$U$ and $L$ and the upper and lower semisimple tunnels $US$ and $LS$ are
shown.}
\label{fig:2bridge}
\end{center}
\end{figure}
Figure~\ref{fig:2bridge} shows a $2$-bridge knot, where the regions labeled
$2a_i$ indicate $2a_i$ left-hand half-twists and those labeled $b_i$
indicate $b_i$ right-hand half-twists. The upper, lower, upper semisimple,
and lower semisimple tunnels are shown; from \cite[Section (1.6)]{M-S}, the
upper semisimple and lower simple tunnels are dual, as are the lower
semisimple and upper simple tunnels.

This position is assigned to the rational number $a/b$ given by the continued
fraction $[2a_1,b_1,2a_2,\ldots,2a_n,b_n]$. Changing the position by
$(1,1)$-isotopy if need be, we may assume that $2a_1$ and $b_n$ are
nonzero (indeed we may assume that no $a_i$ or $b_i$ is zero, although for
some calculations it is convenient to allow zero values), and we always
choose $a$ positive, so have $0<|b|<a$. Also, $a$ is odd (the values when
$a$ is even correspond to $2$-bridge links).

Notice that there is an isotopy moving the knot in Figure~\ref{fig:2bridge}
to the position given by the continued fraction
$[-b_n,-2a_n,-b_{n-1},\ldots,-b_1,-2a_1]$. The first step is to move the
top horizontal strand down to the bottom. The twists $b_i$
then appear in the middle and the $2a_i$ at the top. Then the
entire knot is rotated until it looks as in Figure~\ref{fig:2bridge} except
with the twists $b_i$ in the middle and the $2a_i$
at the bottom; the minus signs are due to the convention about which
directions of twists are considered to be positive for the middle versus
the bottom two strands. The upper tunnel for the second position is the
lower tunnel for the original position. Similarly, the upper semisimple
tunnel for the first position is the lower semisimple tunnel for the
second.

If $[2a_1,b_1,\ldots, 2a_n,b_n]=a/b$, then
$[-b_n,-2a_n,-b_{n-1},\ldots,-b_1,-2a_1]$ is $a/b'$ where $0<|b|<a$,
$0<|b'|<a$, and $bb'\equiv 1\bmod a$. This and many other basic facts
about continued fraction expansions can be verified using \cite[Lemma
14.3]{CMtree}. For suppose we use Lemma~14.2 of \cite{CMtree} (which is
itself a consequence of Lemma~14.3 of \cite{CMtree}) to write
$[2a_1,b_1,\ldots,2a_n,b_n]=a/b$. By Lemma~14.3 of \cite{CMtree}, we have
$U^{2a_1}L^{b_1}\cdots U^{2a_n}L^{b_n}=\begin{bmatrix} a & r \\ b &
s\end{bmatrix}$ where
$[-b_n,-2a_n,-b_{n-1},\ldots,-b_1,-2a_1]
=-[b_n,2a_n,b_{n-1},\ldots,b_1,2a_1]=a/(-r)$. Since $as-rb=1$, we have
$(-r)b=1\bmod a$.

The $2$-bridge knot is actually classified up to isotopy by the pair of
(possibly equal) values $b/a$ and $b'/a$ in $\Q/\Z$. Replacing $a/b$ by
$a/(b\pm a)$, if necessary, and applying Lemma~14.2 of~\cite{CMtree}, we
may assume that all terms in the continued fraction expansion of $a/b$ are
even. The corresponding $2$-bridge position, having only full twists of the
left two strands and the middle two strands, is called the \textit{Conway
  position} of the $2$-bridge knot.

\begin{proposition}\label{prop:2bridge_slopes}
The lower simple tunnel has slope invariant
\[m_0=[1/[2a_1,b_1,\ldots, 2a_n,b_n]]\ ,\]
and the upper simple tunnel has
slope invariant
\[m_0=[1/[-b_n,-2a_n,-b_{n-1},\ldots,-b_1,-2a_1]]\ .\]\par
\end{proposition}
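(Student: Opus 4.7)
The plan is to realize each simple tunnel as the upper tunnel of a $(1,1)$-position obtained by performing a single cabling on the trivial knot, and then to read off the slope directly from Theorem~\ref{thm:winding}. Start with the trivial knot $K(1)$ in standard position, whose braid description is the identity word with $t(1)=0$. Since the two-bridge knot of Figure~\ref{fig:2bridge} is obtained from the trivial knot by the cabling that replaces the arc $\alpha_W$ by the standard tangle of type $(2a_1,b_1,\ldots,2a_n,b_n)$, the lower simple tunnel $L$ arises as the new tunnel arc shown (before repositioning) in the lower left of the $B$-ball in Figure~\ref{fig:unwinding}(a). After the maneuver of Section~\ref{sec:last_step}, this new tunnel is the upper tunnel of the resulting braid-position, and Theorem~\ref{thm:unwinding} already records the new braid word explicitly.

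Now apply the Slope Theorem. With $w=1$ and $t(w)=0$, the slope of the single cabling is
\[ [2t(1)+2a_1,\,b_1,\,2a_2,\,\ldots,\,2a_n,\,b_n]\;=\;[2a_1,b_1,\ldots,2a_n,b_n]\;=\;a/b.\]
Because the cabling begins from the primitive triple of the trivial knot, its slope invariant is the simple slope $m_0\in\Q/\Z\cup\{\infty\}$, which is $[p/q]$ when the ordinary slope is $q/p$. Converting, $m_0=[b/a]=[1/[2a_1,b_1,\ldots,2a_n,b_n]]$, establishing the formula for the lower simple tunnel.

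For the upper simple tunnel $U$, the plan is to invoke the alternative presentation of the same two-bridge knot, noted in the paragraphs preceding the proposition: the knot given by $[2a_1,b_1,\ldots,2a_n,b_n]$ is $(1,1)$-isotopic to the one given by $[-b_n,-2a_n,-b_{n-1},\ldots,-b_1,-2a_1]$, via the rotation that brings the $b_i$-twists to the middle and the $2a_i$-twists to the bottom. Under this repositioning the roles of upper and lower simple tunnels are interchanged, so $U$ becomes the lower simple tunnel of the alternate $(1,1)$-position. Applying the first part of the proof to this new continued fraction then yields $m_0=[1/[-b_n,-2a_n,\ldots,-b_1,-2a_1]]$.

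The main obstacle is the bookkeeping in the identification step, namely verifying that the new tunnel arc produced by the cabling in Figure~\ref{fig:unwinding}, once the knot and tunnel are put back into standard position, truly corresponds to the tunnel labeled $L$ in Figure~\ref{fig:2bridge} rather than to $U$. This rests on checking that the new tunnel arc lives in the $B$-ball of the $T\times I$ layer (hence below the standard-tangle portion of the knot in the picture of Figure~\ref{fig:2bridge}), so that after the standard-position maneuver it is the arc shown at the bottom of that figure. Once this figure-matching is confirmed, the rest is a direct application of Theorems~\ref{thm:unwinding} and~\ref{thm:winding} together with the sign conventions used for the two-bridge continued fraction.
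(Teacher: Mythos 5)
Your proposal is correct and follows essentially the same route as the paper: realize the lower simple tunnel via a single cabling on the trivial knot, use $t(1)=0$ in the Slope Theorem to get ordinary slope $[2a_1,b_1,\ldots,2a_n,b_n]$ and hence simple slope its reciprocal class, then handle the upper simple tunnel by passing to the rotated position with continued fraction $[-b_n,-2a_n,\ldots,-b_1,-2a_1]$. The only detail the paper adds that you omit is the parenthetical remark that applying Theorem~\ref{thm:winding} in the second case formally requires an isotopy making the $-b_i$ even, which does not change the value of the continued fraction.
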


\begin{proof}
Figure~\ref{fig:lower_tunnel_slope}, a case of Figure~\ref{fig:unwinding}(a),
shows a $2$-bridge knot $K$ obtained from the trivial knot $K_0$ by a
single cabling. The tunnel arc is the lower simple tunnel of $K$. Since the
algebraic winding number $t(K_0)$ is $0$, Theorem~\ref{thm:winding} gives
the slope of this cabling to be $[2a_1,b_1,\ldots, 2a_n,b_n]$, so the
simple slope of the lower tunnel is $[1/[2a_1,b_1,\ldots,
2a_n,b_n]]$. Since the upper simple tunnel is the lower simple tunnel
for the position of $K$ corresponding to the continued fraction
$[-b_n,-2a_n,-b_{n-1},\ldots,-b_1,-2a_1]$, the simple slope of the upper
tunnel is as given in the proposition. (To apply Theorem~\ref{thm:winding},
the position would have to be moved by isotopy to change the $-b_i$ to be
even, but this would not change the value of the continued fraction.)
\begin{figure}
\labellist
\pinlabel $2a_1$ [B] at 300 370
\pinlabel $b_1$ [B] at 247 280
\endlabellist
\begin{center}
\includegraphics[width=40ex]{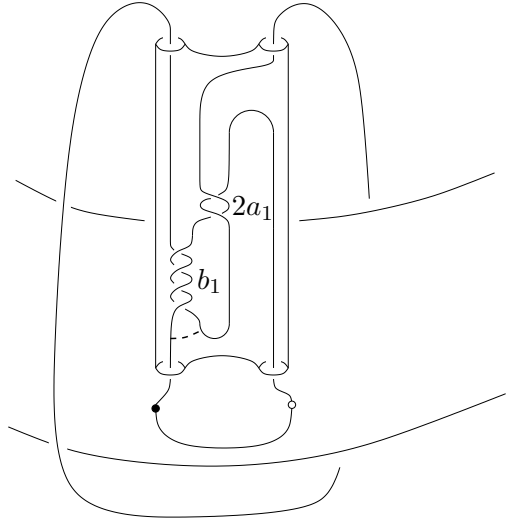}
\caption{The slope calculation for the lower tunnel of a $2$-bridge knot.}
\label{fig:lower_tunnel_slope}
\end{center}
\end{figure}
\end{proof}

From Proposition~\ref{prop:2bridge_slopes}, we have
\begin{corollary} Let the rational invariant of the $2$-bridge knot be
given by the continued fraction $a/b=[2a_1,b_1,2a_2,\ldots, 2a_n,b_n]$,
with $0<b<a$. Let $b'$ be the integer with $0<b'<a$ and $bb'\equiv 1\mod
a$. Then the simple slope of the upper tunnel of $K$ is $[b'/a]$, and the
simple slope of the lower tunnel is $[b/a]$.
\label{coro:simple_slopes}
\end{corollary}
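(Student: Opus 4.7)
The plan is to deduce the corollary almost directly from Proposition~\ref{prop:2bridge_slopes} together with the continued-fraction identity established in the paragraph preceding it. There is essentially no new geometric content; everything is arithmetic.

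First I would handle the lower tunnel, which is immediate. Proposition~\ref{prop:2bridge_slopes} gives the simple slope of the lower tunnel as $[1/[2a_1,b_1,\ldots,2a_n,b_n]]$. Substituting the hypothesis $[2a_1,b_1,\ldots,2a_n,b_n]=a/b$ yields $[b/a]$, which since $0<b<a$ is already in the canonical representative form in $\Q/\Z$.

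For the upper tunnel, I would invoke the second formula in Proposition~\ref{prop:2bridge_slopes}, giving the simple slope as $[1/[-b_n,-2a_n,\ldots,-b_1,-2a_1]]$. Here I would quote the matrix computation already laid out just before Proposition~\ref{prop:2bridge_slopes}: writing $U^{2a_1}L^{b_1}\cdots U^{2a_n}L^{b_n}=\begin{bmatrix}a&r\\b&s\end{bmatrix}$, the identity $[-b_n,-2a_n,\ldots,-b_1,-2a_1]=a/(-r)$ was verified there, together with the relation $as-rb=1$, which forces $(-r)b\equiv 1\pmod a$. So the simple slope becomes $[-r/a]$, and since $(-r)b\equiv 1\equiv b'b\pmod a$ with $0<b'<a$, we have $-r\equiv b'\pmod a$, yielding $[-r/a]=[b'/a]$ in $\Q/\Z$.

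The only step that merits any care is the sign and reduction bookkeeping in identifying $-r$ with $b'$ modulo $a$; the matrix relation makes this transparent, so no separate obstacle remains. I would conclude by simply combining the two calculations and noting that the definition of $b'$ in the corollary matches the residue class produced by the continued-fraction reversal.
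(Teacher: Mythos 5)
Your proposal is correct and matches the paper's own (implicit) derivation: the paper states the corollary with only the words ``From Proposition~\ref{prop:2bridge_slopes}, we have,'' relying on exactly the matrix identity $U^{2a_1}L^{b_1}\cdots U^{2a_n}L^{b_n}=\begin{bmatrix}a&r\\b&s\end{bmatrix}$ and the congruence $(-r)b\equiv 1\pmod a$ established in the paragraph before the proposition. You have simply written out the arithmetic the paper leaves to the reader, and it is all in order.
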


We turn now to the semisimple tunnels, whose slope invariants were
calculated in~\cite{CMtree}. We will obtain a braid description such that
the upper semisimple tunnel is the upper tunnel for the associated
$(1,1)$-position, then use it to recover the slope calculation of
\cite{CMtree}. We will also prove a new result,
Theorem~\ref{thm:slope_sequence_characterization}, which characterizes the
slope sequences of these tunnels.

Braid descriptions of these $(1,1)$-positions were given by A. Cattabriga
and M. Mulazzani~\cite{Cat-Mul} and more recently in the dissertation of
A. Seo~\cite{Seo}. Here is the braid description that we will use:
\begin{lemma} The braid word
$\delta_m^{-a_1}\sigma^{b_1}\cdots\delta_m^{-a_n}\sigma^{b_n}\delta_\ell^{-1}$
  describes a $(1,1)$-position of the
$2$-bridge knot $K$ given by
  $[2a_1,b_1,\ldots,2a_n,b_n]$. The upper tunnel of this
$(1,1)$-position is the upper semisimple tunnwl of $K$.\par
\label{lem:semisimple_braid_word}
\end{lemma}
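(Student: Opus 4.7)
The plan is to verify the lemma by an explicit $(1,1)$-isotopy that relates the braid description to the standard $2$-bridge picture of Figure~\ref{fig:2bridge}. The argument has two parts: identifying $K(\omega)$ as the claimed $2$-bridge knot, and identifying the upper tunnel of the resulting $(1,1)$-position as the upper semisimple tunnel.

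For the first part, I would draw $\omega$ as a two-strand braid in $T\times I$ and close it up with the standard arcs $\alpha_W$ in $W$ and $\alpha_V$ in $V$. Reading the word block by block, each $\sigma^{b_i}$ contributes $b_i$ half-twists between the two braid strands, each $\delta_m^{-a_i}$ winds the black strand $a_i$ times around the meridian of $V$, and the trailing $\delta_\ell^{-1}$ winds the black strand around $\ell$, a curve bounding a disk in $W$. The main geometric step is a straightening isotopy that converts this cylindrical picture into the planar $2$-bridge diagram. Each full meridional winding contributed by $\delta_m^{-a_i}$ unwinds through $\alpha_V$ into two adjacent left-handed half-twists of the left pair of strands in Figure~\ref{fig:2bridge}, so the block $\delta_m^{-a_i}$ produces the $2a_i$ left-handed half-twists; the $\sigma^{b_i}$ blocks produce the $b_i$ right-handed half-twists of the middle two strands; and the final $\delta_\ell^{-1}$ performs the longitudinal closure through $W$ that sews the two free ends into the $2$-bridge shape. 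An inductive formulation on $n$ (inserting one more block $\delta_m^{-a_{n+1}}\sigma^{b_{n+1}}$ immediately before $\delta_\ell^{-1}$ nests one further level of the continued fraction) makes the argument uniform and reproduces the presentation $[2a_1,b_1,\ldots,2a_n,b_n]$ of Figure~\ref{fig:2bridge}.

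For the second part, I would chase the trivial arc in $W$ parallel to $\alpha_W$, which represents the upper tunnel of the $(1,1)$-position, through the same straightening isotopy. None of the moves disturb this arc other than carrying it rigidly with $\alpha_W$, so at the end it occupies the position labeled $US$ in Figure~\ref{fig:2bridge}. The main obstacle is careful bookkeeping: one must verify that the signs of the half-twists come out exactly as in Figure~\ref{fig:2bridge}, and must distinguish the resulting arc from the upper simple tunnel $U$ of the same figure. Once the identification is believed, a convenient consistency check is to feed $\omega$ through Theorems~\ref{thm:unwinding} and~\ref{thm:winding} and compare the resulting slope sequence to the invariants of the upper semisimple tunnel computed in \cite[Section~15]{CMtree}.
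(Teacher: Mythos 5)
Your proposal is correct and follows essentially the same route as the paper's second, ``direct'' argument: one starts from the trivial knot with braid description $\delta_\ell^{-1}$ (whose upper tunnel is already in the upper-semisimple position), observes that a full twist of two strands of the $2$-bridge diagram is the representative of $\delta_m^{-1}$ that moves the white point around $m$ while a half twist of the other pair is $\sigma$, and then inserts the standard tangle block by block, carrying the tunnel arc along to the position $US$ of Figure~\ref{fig:2bridge}. Be aware that the paper also gives a quicker first proof--- the upper semisimple tunnel is dual to the lower simple tunnel, whose $(1,1)$-position is produced by a single cabling on the trivial knot and hence by Theorem~\ref{thm:unwinding} has braid description $\delta_m\sigma^{b_n+1}\delta_\ell^{-a_n}\cdots\sigma^{b_1}\delta_\ell^{-a_1}$, so the desired word is just its reverse braid--- and that your ``bookkeeping'' caveat is exactly where the content lies: which pair of strands carries the $2a_i$ versus the $b_i$ twists, and the fact that dropping the trailing $\delta_\ell^{-1}$ would leave a word in $\langle\delta_m,\sigma\rangle$ describing the trivial knot, so that letter is doing more than closing up the plat.
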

\longpage

A quick way to obtain this braid description is to use the fact that the
upper semisimple tunnel is dual to the lower simple tunnel. As seen in
Figure~\ref{fig:lower_tunnel_slope}, the lower simple tunnel is obtained
from the upper tunnel of the trivial knot by a single cabling of type
$(2a_1,b_1,\ldots,2a_n,b_n)$. By Theorem~\ref{thm:unwinding}, this
$(1,1)$-position is described by the braid
$\delta_m\sigma^{b_n+1}\delta_\ell^{-a_n}\cdots
\sigma^{b_1}\delta_\ell^{-a_1}$. Since the upper semisimple tunnel is dual
to the lower simple tunnel, it is the upper tunnel of the $(1,1)$-position
described by the reverse of this word, which~is
\[\delta_m^{-a_1}\sigma^{b_1}\cdots
\delta_m^{-a_n}\sigma^{b_n+1}\delta_\ell
\sim \delta_m^{-a_1}\sigma^{b_1}\cdots
\delta_m^{-a_n}\sigma^{b_n}\delta_\ell^{-1}\ .\]

\begin{figure}
\labellist
\pinlabel $m$ [B] at 203 415
\pinlabel $\ell$ [B] at 180 350
\pinlabel $\delta^{-1}_m$ [B] at 315 130
\pinlabel $\sigma$ [B] at 137 35
\pinlabel $\ell$ [B] at 573 262
\pinlabel $m$ [B] at 655 195
\endlabellist
\begin{center}
\includegraphics[width=52ex]{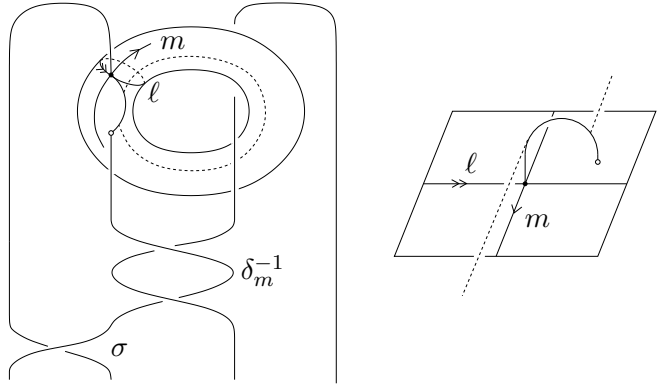}
\caption{Calculation of the braid description for the upper semisimple tunnel of a
$2$-bridge knot, as seen from the outside and from the inside of the
standard torus.}
\label{fig:2bridge_word}
\end{center}
\end{figure}
\begin{figure}
\labellist
\pinlabel $m$ [B] at 198 305
\pinlabel $\ell$ [B] at 180 241
\endlabellist
\begin{center}
\includegraphics[width=30ex]{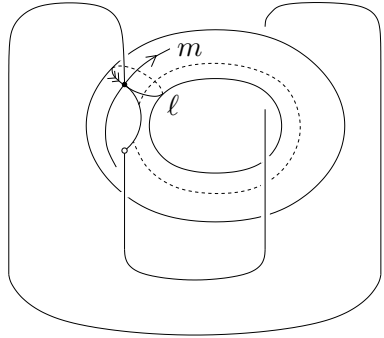}
\caption{The initial position of the trivial tunnel as an upper semisimple
tunnel has braid description $\delta_\ell^{-1}$.}
\label{fig:initial}
\end{center}
\end{figure}
A second, perhaps more satisfying way to obtain
Lemma~\ref{lem:semisimple_braid_word} is to see the braid directly.
Figure~\ref{fig:2bridge_word} shows the setup. The second drawing shows the
view from inside $W$, similar to the view of Figure~\ref{fig:braids}, and
the first shows the view from $T\times I$ looking at $W$ from the
outside. Observe that a full twist of the middle two strands represents
the braid that moves the white point around $m$ in the positive direction,
which is $\delta_m^{-1}$, and the half twist of the left two strands
represents $\sigma$.

Figure~\ref{fig:initial} shows the tunnel of the trivial knot as
the upper semisimple tunnel of the trivial $2$-bridge knot. It is the upper
tunnel for the $(1,1)$-position of the trivial knot described by the braid
$\delta_\ell^{-1}$ that moves the white point around $\ell$ in the positive
direction, that is, $K(\delta_\ell^{-1})$.

Now, modify the trivial knot in Figure~\ref{fig:initial} by inserting a
standard tangle of type $(2a_1,b_1,\ldots,2a_n,b_n)$ into $T\times I$, in
the position seen in Figure~\ref{fig:2bridge_word}. The portion labeled
$\delta_m^{-1}$ in Figure~\ref{fig:2bridge_word} will be $2a_1$ left-hand
half twists, the portion labeled $\sigma$ will be $b_1$ right-hand half
twists, below this will be $2a_2$ left-hand half twists, and so on, ending
with $b_n$ half twists and then the $\delta_\ell^{-1}$ already present.
This produces the knot seen in Figure~\ref{fig:2bridge}, and the upper
semisimple tunnel seen there, and the braid description of
Lemma~\ref{lem:semisimple_braid_word}.

We are now ready to calculate the slope invariants. By allowing the
possibility that $b_i=0$, we may assume that every $a_i$ is $\pm 1$ (since
continued fractions have the property that
$[\cdots,n_1+n_2,\cdots]=[\cdots,n_1,0,n_2,\cdots]$). We may further assume
that if the last term $b_n=\pm1$ then $a_n$ and $b_n$ have the same sign.

It is convenient to reindex the continued fraction as
$[2a_d,b_d,2a_{d-1},\ldots,\allowbreak 2a_0,b_0]$. We first consider four
cases with $i\geq 1$:
\smallskip

\noindent\textsl{Case I:} $a_i=1$, $a_{i-1}=1$
\smallskip

In this case the braid appears as
\[\omega_1\;\delta_m^{-1}\sigma^{b_i}\;\delta_m^{-1}\sigma^{b_{i-1}}\;\omega_2
= \omega_1\;\sigma \;(\delta_m\sigma)\sigma^{b_i+1}\;\delta_m\sigma^{b_{i-1}+1}\;\omega_2\]
and the cabling corresponding to $a_i$ has slope $[2t(\omega),b_i+1]$, where
$\omega=\delta_m\sigma^{b_{i-1}+1}\;\omega_2$.
\smallskip

\noindent\textsl{Case II:} $a_i=-1$, $a_{i-1}=1$
\smallskip

The braid is
\[\omega_1\; \delta_m\sigma^{b_i}\;\delta_m^{-1}\sigma^{b_{i-1}}\;\omega_2
= \omega_1\; (\delta_m\sigma)\sigma^{b_i}\;\delta_m\sigma^{b_{i-1}+1}\;\omega_2\]
and the cabling corresponding to $a_i$ has slope
$[2t(\omega),b_i]$, where again $\omega=\delta_m\sigma^{b_{i-1}+1}\;\omega_2$.
\smallskip

\noindent\textsl{Case III:} $a_i=1$, $a_{i-1}=-1$
\smallskip

The braid is
\[ \omega_1\; \delta_m^{-1}\sigma^{b_i}\;\delta_m\sigma^{b_{i-1}}\;\omega_2
= \omega_1\; \sigma \;(\delta_m\sigma)\sigma^{b_i}\;\delta_m\sigma^{b_{i-1}}\;\omega_2\]
and the cabling corresponding to $a_i$ has slope $[2t(\omega),b_i]$,
but this time $\omega=\delta_m\sigma^{b_{i-1}}\;\omega_2$.\par
\smallskip

\noindent\textsl{Case IV:} $a_i=-1$, $a_{i-1}=-1$
\smallskip

The braid is
\[\omega_1\; \delta_m\sigma^{b_i}\;\delta_m\sigma^{b_{i-1}}\;\omega_2
= \omega_1\; (\delta_m\sigma)\sigma^{b_i-1}\;\delta_m\sigma^{b_{i-1}}\;\omega_2\]
and the cabling corresponding to $a_i$ has slope
$[2t(\omega),b_i-1]$, where again $\omega=\delta_m\sigma^{b_{i-1}}\;\omega_2$.\par
\medskip

For the initial cabling, we have

\noindent\textsl{Case V:} $a_0=1$
\smallskip

The braid is
\[ \cdots \sigma^{b_1}\,\delta_m^{-1}\sigma^{b_0}\delta_\ell^{-1}
= \cdots \sigma^{b_1+1} \,(\delta_m\sigma)\sigma^{b_0}\;\delta_\ell^{-1}\]
and the initial cabling has simple slope $[1/[2,b_0]]=[b_0/(2b_0+1)]$
\smallskip

\noindent\textsl{Case VI:} $a_0=-1$
\smallskip

The braid is
\[\cdots \sigma^{b_1}\,\delta_m\sigma^{b_0}\delta_\ell^{-1}
= \cdots \sigma^{b_1}\,(\delta_m\sigma)\sigma^{b_0-1}\;\delta_\ell^{-1}\]
and the initial cabling has simple slope
$[1/[2,b_0-1]]=[(b_0-1)/(2b_0-1)]$.\par
\medskip

From Cases~V and~VI, we have $m_0=[b_0/(2b_0+1)]$ or
$m_0=[(b_0-1)/(2b_0-1)]$ according as $a_0$ is $1$ or $-1$.

To compute the remaining $m_i$, we assume that the knot is in Conway
position so that all the $b_i$ are even. We then have
\[t(\omega_2)
=t(\delta_m^{-a_{i-2}}\sigma^{b_{i-2}}\cdots
\delta_m^{-a_0}\sigma^{b_0}\delta_\ell^{-1})
=(-1)^{b_{i-2}+\cdots+b_0}=1\ ,\]
and from Cases~I-IV, using the fact that $b_{i-1}$ is even,
$t(\omega)$ equals $-1$ when $a_{i-1}=1$ and $1$ when $a_{i-1}=-1$. That is,
$t(\omega)=-a_{i-1}$. Summarizing, we have
\begin{proposition}
Let $K$ be in the $2$-bridge position corresponding to the continued
fraction $[2a_d,2b_d,\ldots,2a_0,2b_0]$, with $b_0\neq 0$ and each $a_i=\pm
1$. Then the slope invariants of the upper semisimple tunnel of $K$ are as
follows:
\begin{enumerate}
\item[(i)] $m_0=\left[\displaystyle\frac{2b_0}{4b_0+1}\right]$ or
$m_0=\left[\displaystyle\frac{2b_0-1}{4b_0-1}\right]$
according as $a_0$ is $1$ or $-1$.
\item[(ii)] For $1\leq i\leq d$, $m_i=-2a_{i-1}+1/k_i$, where
\begin{enumerate}
\item[(a)] $k_i=2b_i+1$ if $a_i=a_{i-1}=1$,
\item[(b)] $k_i=2b_i$ if $a_i$ and $a_{i-1}$ have opposite signs, and
\item[(c)] $k_i=2b_i-1$ if $a_i=a_{i-1}=-1$.
\end{enumerate}
\end{enumerate}
\label{prop:semisimple_slopes}
\end{proposition}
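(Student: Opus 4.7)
The plan is to derive the slope invariants directly from the braid description given by Lemma~\ref{lem:semisimple_braid_word}, peeling off one cabling at a time from the left by applying Theorems~\ref{thm:unwinding} and~\ref{thm:winding}. In Conway position the braid description reads
\[
\omega \;=\; \delta_m^{-a_d}\sigma^{2b_d}\,\delta_m^{-a_{d-1}}\sigma^{2b_{d-1}}\cdots\delta_m^{-a_0}\sigma^{2b_0}\,\delta_\ell^{-1}.
\]
The key algebraic tool is the identity $\delta_m^{-1} = \sigma\delta_m\sigma$, which follows from $(\delta_m\sigma)^2 = 1$. Applying it to each appearance of $\delta_m^{-1}$ in consecutive segments $\delta_m^{-a_i}\sigma^{2b_i}\,\delta_m^{-a_{i-1}}$ reshuffles the $\sigma$ factors so that the contribution of the $i$-th cabling takes the form $(\delta_m\sigma)\sigma^{k_i}$ demanded by Theorem~\ref{thm:unwinding}, with the three possibilities for $k_i$ ($2b_i+1$, $2b_i$, or $2b_i-1$) corresponding to the four sign combinations worked out in Cases~I--IV.

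Once the braid has been put into this form, Theorem~\ref{thm:unwinding} identifies the standard tangle of the $i$-th cabling as having type $(0,k_i)$ (since the intervening word $\omega_i(\delta_\ell,\sigma) = \sigma^{k_i}$ contains no $\delta_\ell$), and Theorem~\ref{thm:winding} then gives its slope as $[2t(\omega)+0,k_i] = 2t(\omega)+1/k_i$, where $\omega$ is the residual braid describing the $(1,1)$-position immediately before the cabling. I would then evaluate $t(\omega)$ using Definition~\ref{def:algebraic_winding_number}: the only $\delta_\ell^{\pm1}$ appearing in the residual braid is the terminal $\delta_\ell^{-1}$, so $t(\omega)$ is determined by the parity of the total $\sigma$-exponent preceding it. The Conway condition makes every $2b_j$ even, so this parity is governed entirely by the single odd adjustment $\sigma^{e_{i-1}}$ produced when the $(i-1)$-st cabling was peeled off. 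Case-by-case this yields $t(\omega) = -a_{i-1}$, and substitution gives $m_i = -2a_{i-1}+1/k_i$, proving~(ii).

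For part~(i), I would peel off the last (first-performed) cabling separately. Its residual braid is simply $\delta_\ell^{-1}$ (the trivial knot in the starting position of Figure~\ref{fig:initial}), for which Definition~\ref{def:algebraic_winding_number} gives $t(\delta_\ell^{-1}) = 1$. Cases~V and~VI yield cabling factor $(\delta_m\sigma)\sigma^{2b_0}$ or $(\delta_m\sigma)\sigma^{2b_0-1}$ according to whether $a_0 = +1$ or $-1$, so Theorem~\ref{thm:winding} produces cabling slope $[2,2b_0]$ or $[2,2b_0-1]$. Inverting each to convert from an ordinary slope to a simple slope in $\Q/\Z$ (as in the proof of Proposition~\ref{prop:2bridge_slopes}) gives the formulas $[2b_0/(4b_0+1)]$ and $[(2b_0-1)/(4b_0-1)]$.

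The main obstacle is the bookkeeping in the four sign cases: both the rewrites via $\delta_m^{-1} = \sigma\delta_m\sigma$ and the parity computation $t(\omega) = -a_{i-1}$ depend on carefully tracking which $\sigma$ factors migrate between adjacent cabling segments, and on verifying that the adjustment $\sigma^{e_{i-1}}$ is precisely what flips the total $\sigma$-exponent from even to odd before the terminal $\delta_\ell^{-1}$. This is what produces the three distinct values of $k_i$ in part~(ii).
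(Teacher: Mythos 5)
Your proposal is correct and follows essentially the same route as the paper: starting from the braid word of Lemma~\ref{lem:semisimple_braid_word}, rewriting each $\delta_m^{-1}$ as $\sigma\delta_m\sigma$ to isolate cabling segments $(\delta_m\sigma)\sigma^{k_i}$ (the paper's Cases I--VI), and then using the Conway-position parity of the $\sigma$-exponent before the terminal $\delta_\ell^{-1}$ to get $t(\omega)=-a_{i-1}$ and hence $m_i=-2a_{i-1}+1/k_i$. The only cosmetic difference is in part (i), where you keep the terminal $\delta_\ell^{-1}$ in the residual braid (so $t(\delta_\ell^{-1})=1$ and tangle type $(0,2b_0)$) while the paper absorbs it into the tangle word (so $t=0$ and type $(2,2b_0)$); since the slope $[2t+2a_1,b_1,\ldots]$ depends only on $t+a_1$, both give the same simple slopes.
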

\noindent This agrees with the calculation obtained in \cite[Section 15]{CMtree}.

Using Proposition~\ref{prop:semisimple_slopes}, we can characterize the
slope sequences of semi\-simple tunnels of $2$-bridge knots.
\begin{theorem} Let $m_0,m_1,\ldots\,$, $m_d$ be a sequence with $m_0\in
\Q/\Z$ and $m_i\in \Q$ for $i>0$. Then $m_0,m_1,\ldots\,$, $m_d$ is the
slope sequence for a semisimple tunnel of a $2$-bridge knot if and only if
it satisfies the following:
\begin{enumerate}
\item[(i)] $m_0=\left[ \displaystyle\frac{n_0}{2n_0+1}\right]$ for some $n_0\notin
\{-1,0\}$.
\item[(ii)] For $i>0$, $m_i=\pm 2 + \displaystyle\frac{1}{k_i}$ for some
integer $k_i\neq 0$.
\item[(iii)] $m_1$ is positive or negative according as $n_0$ is odd
or even.
\item[(iv)] For $1\leq i\leq d$, $m_i$ has the same sign as $m_{i-1}$ if and
only if $k_{i-1}$ is odd.
\end{enumerate}
\label{thm:slope_sequence_characterization}
\end{theorem}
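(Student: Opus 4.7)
The approach is to use Proposition~\ref{prop:semisimple_slopes} as a dictionary between normalized data $(a_i,b_i)_{0\le i\le d}$, with $a_i\in\{\pm 1\}$, $b_0\ne 0$, and no degenerate $b_i=0$ when $a_i\ne a_{i-1}$, and slope sequences $(m_0,\ldots,m_d)$. For necessity, start from a $2$-bridge knot in the Conway-normalized position $[2a_d,2b_d,\ldots,2a_0,2b_0]$. The two formulas for $m_0$ produce $n_0=2b_0$ (even) when $a_0=1$ and $n_0=2b_0-1$ (odd) when $a_0=-1$; the constraint $b_0\ne 0$ becomes $n_0\notin\{0,-1\}$, yielding (i). The identity $m_i=-2a_{i-1}+1/k_i$ with $a_{i-1}=\pm 1$ gives the form in (ii), with $k_i\ne 0$ automatic since $k_i$ is odd in cases (a) and (c) of the proposition and $b_i\ne 0$ by normalization in case (b). A direct check shows $-2+1/k<0$ and $2+1/k>0$ for every nonzero integer $k$, so $\operatorname{sign}(m_1)=-a_0$, and the parity correspondence between $n_0$ and $a_0$ yields (iii). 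Finally $\operatorname{sign}(m_i)=-a_{i-1}$, so $m_i$ and $m_{i-1}$ share a sign iff $a_{i-1}=a_{i-2}$, which by the case division happens iff $k_{i-1}$ is odd, proving (iv).

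For sufficiency, given $(m_i)$ satisfying (i)--(iv), reconstruct $(a_i,b_i)$ inductively. Set $a_0=1$, $b_0=n_0/2$ if $n_0$ is even, and $a_0=-1$, $b_0=(n_0+1)/2$ if $n_0$ is odd; the hypothesis $n_0\notin\{0,-1\}$ forces $b_0\ne 0$. For $i\ge 1$, write $m_i=\varepsilon_i\cdot 2+1/k_i$ with $\varepsilon_i\in\{\pm 1\}$, provisionally define $a_{i-1}^{\mathrm{new}}=-\varepsilon_i$, set $a_i=a_{i-1}^{\mathrm{new}}$ if $k_i$ is odd and $a_i=-a_{i-1}^{\mathrm{new}}$ if $k_i$ is even, and recover $b_i$ from $k_i$ by inverting the case formulas (so $b_i=(k_i\mp 1)/2$ or $k_i/2$). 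The resulting continued fraction $[2a_d,2b_d,\ldots,2a_0,2b_0]$ defines a $(1,1)$-position as in Figure~\ref{fig:2bridge}; since $U^{2k}\equiv L^{2k}\equiv I\pmod{2}$, the associated rational has odd numerator and therefore represents a knot rather than a link. Proposition~\ref{prop:semisimple_slopes} applied to this $2$-bridge knot returns exactly the given $(m_i)$ as the slope sequence of its upper semisimple tunnel.

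The main obstacle is verifying that the reconstruction is \emph{consistent}: the value $a_{i-1}^{\mathrm{new}}$ read off from $m_i$ must agree with the $a_{i-1}$ already fixed at the previous step (from $m_{i-1}$, or from $n_0$ when $i=1$). Condition (iii) is precisely what is needed at $i=1$, since it forces $\varepsilon_1=-a_0$. For $i\ge 2$, condition (iv) asserts $\operatorname{sign}(m_i)=\operatorname{sign}(m_{i-1})$ iff $k_{i-1}$ is odd; combined with $\operatorname{sign}(m_{i-1})=-a_{i-2}$ (inductively) and the rule defining $a_{i-1}$ from the parity of $k_{i-1}$ (equal to $a_{i-2}$ iff $k_{i-1}$ is odd), this forces $\operatorname{sign}(m_i)=-a_{i-1}$, i.e.\ $a_{i-1}^{\mathrm{new}}=a_{i-1}$, which closes the induction and completes the proof.
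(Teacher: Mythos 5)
Your proposal is correct and follows essentially the same route as the paper: necessity by reading the four conditions off Proposition~\ref{prop:semisimple_slopes} together with the sign analysis of $\pm 2+1/k$, and sufficiency by reconstructing the Conway-position continued fraction $[2a_d,2b_d,\ldots,2a_0,2b_0]$ from the $\varepsilon_i$ and the parities of the $k_i$. You are in fact slightly more explicit than the paper on two points the published proof leaves implicit --- that $k_i\neq 0$ in case (b) because the inserted zero $b$'s only occur between same-sign $a$'s, and that conditions (iii) and (iv) are exactly what make the reconstructed $a_{i-1}$ consistent with the sign of $m_i$ --- but the argument is the same.
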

\begin{proof}
First assume that this is a slope sequence for a semisimple tunnel. Part
(i) follows from Proposition~\ref{prop:semisimple_slopes}(i), with the
excluded cases corresponding to the cases when $b_0=0$. Part (ii) is
immediate from Proposition~\ref{prop:semisimple_slopes}(ii). In
Proposition~\ref{prop:semisimple_slopes}(ii), $m_1$ has the opposite sign
from $a_0$, and Proposition~\ref{prop:semisimple_slopes}(i) shows that
$a_0$ is negative or positive according as $n_0$ is odd or even. This
establishes part (iii). For part (iv),
Proposition~\ref{prop:semisimple_slopes}(ii) shows that the signs of $m_i$
and $m_{i-1}$ differ exactly when $a_{i-1}$ and $a_{i-2}$ have opposite
signs. By Proposition~\ref{prop:semisimple_slopes}(ii)(a)(b)(c), this is
exactly the case when $k_{i-1}=2b_{i-1}$, and when $a_{i-1}$ and
$a_{i-2}$ have the same sign, $k_{i-1}=2b_{i-1}\pm 1$ which is odd.

For the converse, given a sequence $m_0,\ldots\,$, $m_d$ as in the
statement of the Proposition, we will construct the continued fraction
expansion $a/b=[2a_d,2b_d,\ldots,2a_0,2b_0]$ for the Conway position, with each
$a_i=\pm1$. Let $a_0$ be $-1$ or $1$ according to whether $n_0$ is odd or
even. For ascending $i$ with $1\leq i\leq d$, let $a_i$ be $a_{i-1}$ if
$k_i$ is odd, and $-a_{i-1}$ if it is even. For the $b_i$, put
$2b_0=n_0$ or $n_0+1$ according as $n_0$ is even or odd. Cases~I-IV now
determine the choices of the remaining $2b_i$ which produce the correct
values for the $m_i$: When $a_i=-a_{i-1}$, $k_i$ is even and Cases~II
and~III give $2b_i=k_i$. When $a_i=a_{i-1}$, $k_i$ is odd and Cases~I
and~IV show that $2b_i=k_i-1$ if $a_i=1$ and $2b_i=k_i+1$ if $a_i=-1$.
\end{proof}

\section{Semisimple tunnels of torus knots}
\label{sec:torus_knots}

To set notation, consider a (nontrivial) $(p,q)$-torus knot $K_{p,q}$,
contained in our standard torus $T$. It represents $p$ times a generator in
$\pi_1(V\cup T\times I)$, and $q$ times a generator in $\pi_1(W)$.

The tunnels of torus knots were classified by M. Boileau, M. Rost, and
H. Zieschang \cite{B-R-Z} and Y. Moriah~\cite{Moriah}.  The \textit{middle
  tunnel} of $K_{p,q}$ is represented by an arc in $T$ that meets $K_{p,q}$
only in its endpoints. The \textit{upper tunnel} of $K_{p,q}$ is
represented by an arc $\alpha$ properly imbedded in $W$, such that the
circle which is the union of $\alpha$ with one of the two arcs of $K_{p,q}$
with endpoints equal to the endpoints of $\alpha$ is a deformation retract
of $W$. The \textit{lower tunnel} is like the upper tunnel, but
interchanging the roles of $V\cup T\times I$ and~$W$. For some choices of
$p$ and $q$, some of these tunnels are equivalent.

A braid description for torus knots was obtained by A. Cattabriga and
M. Mulazzani~\cite[Section 4]{Cat-Mul1}. Here we will use a similar
description due to A. Seo~\cite{Seo}. Fix $(p,q)$ relatively prime, and
suppose for now that $p,q\geq 2$. In $\R^2$ we construct a polygonal path
$P_{p,q}$ from $(0,0)$ to $(p,q)$, as indicated in
Figure~\ref{fig:torus_word} for the cases when $(p,q)$ is $(3,7)$ and
$(7,3)$, as follows: Regard $\R^2$ as made up of squares with side length
$1$, whose corners have integer coordinates.  Consider the rectangle $R$
with corners $(0,0)$ and $(p,q)$, and let $S$ be the union of the squares
in $\R^2$ whose bottom sides contain no points above the line containing
$(0,0)$ and $(p,q)$. Then $P_{p,q}$ is $R\cap \partial S$.

\begin{figure}
\begin{center}
\includegraphics[width=53ex]{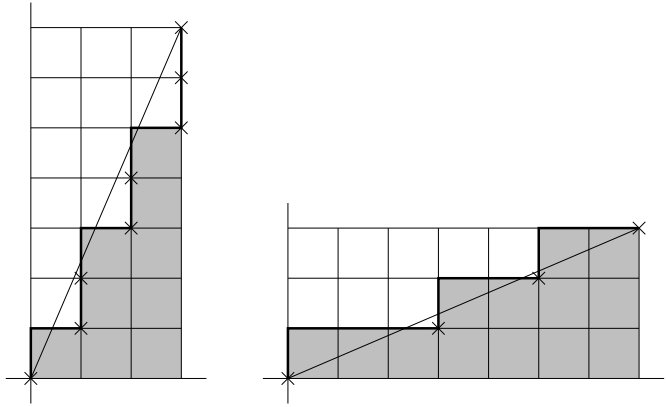}
\caption{The points $(p_k,k)$ for $0\leq k\leq q$ for
the cases $(p,q)=(3,7)$ and $(p,q)=(7,3)$.}
\label{fig:torus_word}
\end{center}
\end{figure}
Explicitly, for $0\leq k\leq q$ put $p_k=\lceil kp/q\rceil$.  The points
$(p_k,k)$ are indicated in Figure~\ref{fig:torus_word}. The intersection
point of the diagonal of $R$ with the line $y=k$ has $x$-coordinate $kp/q$,
so $\lceil kp/q\rceil$ is the $x$-coordinate of the first integral lattice
point on $y=k$ that lies on or to the right of this intersection point.
The path $P_{p,q}$ is the union for $1\leq i\leq q$ of the segments (some
of which may have length $0$) from $(p_{i-1},i-1)$ to $(p_{i-1},i)$ and
from $(p_{i-1},i)$ to $(p_i,i)$.

We will now use $P_{p,q}$ to obtain a braid description of $K_{p,q}$.  As
usual, the braid portion will start in $T$.  Referring again to
Figure~\ref{fig:torus_word}, we may assume that the plane picture is drawn
so that the lifts of the fixed arc $\alpha$ in $T$ from the black point to
the white point are short straight line segments positioned so that each
meets only the translate of the diagonal of $R$ that contains its lower
left point (that is, its black point), and apart from that point lies below
that translate.

Consider the braid $\beta$ with the following description. As we descend in
$T\times I$, the $T$-coordinate of the white point stays fixed, while the
$T$-coordinate of the black point moves \textit{backward} along the
$(p,q)$-curve which is the image of the diagonal of the rectangle (that is,
the lift to $\R^2$ of its path in $T$ starting $(p,q)$ travels along the
diagonal of $R$ to $(0,0)$). The choice of the backward direction is not
essential, but leads to a simpler calculation. Since each lift of $\alpha$
lie below the translate of the diagonal that it meets, the diagonal of $R$
is isotopic, not crossing any lift of $\alpha$ and in particular not
crossing any lift of the white point, to $P_{p,q}$. This implies that
$\beta$ is represented by the braid word whose letters correspond to the
horizontal and vertical steps along $P_{p,q}$ (starting from the upper
right), with each downward step being $\delta_m$ and each leftward step
being $\delta_\ell^{-1}$.  This word is
\[\omega_{(p,q)}=\delta_\ell^{p_{q-1}-p_q}\delta_m\;\cdots\;
\delta_\ell^{p_0-p_1}\delta_m\ .\]

Similar considerations give a braid description for the case when
$p>0$ and $q<0$. For $0\leq k\leq p$ put $q_k=\lceil kq/p \rceil$. The
braid is then
$\omega_{(p,q)}=\prod_{k=0}^{p-1}\delta_\ell\delta_m^{q_k-q_{k+1}}$.
\longpage

Assuming as before that $p,q\geq 2$,
we now use $\omega_{(p,q)}$ to compute the slope coefficients of the upper
tunnel of $K_{p,q}$. We have
\begin{gather*} \omega_{(p,q)}\sim \delta_m\delta_\ell^{p_{q-2}-p_{q-1}}\;
\cdots\;
\delta_m\delta_\ell^{p_1-p_2}\;\delta_m\delta_\ell^{p_0-p_1}\\
=\delta_m\sigma\;\sigma^{-1}\delta_\ell^{p_{q-2}-p_{q-1}}\;
\cdots\;
\delta_m \sigma\;\sigma^{-1}\delta_\ell^{p_1-p_2}\;
\delta_m \sigma\;\sigma^{-1}\delta_\ell^{p_0-p_1}\ .
\end{gather*}
\noindent
Putting $\omega_j=\delta_m\delta_\ell^{p_{j-1}-p_j}\;
\cdots\;\delta_m\delta_\ell^{p_1-p_2}\;\delta_m\delta_\ell^{-p_1}$,
we have
\[ t(\omega_j)=(p_j-p_{j-1}) + \cdots + (p_2-p_1) + (p_1-p_0) =
p_j-p_0=p_j\ .\]

Now, working from the right, Theorem~\ref{thm:winding} finds the
cabling slope sequence for the upper tunnel:
\begin{gather*}
[2(p_1-p_0),-1]=2p_1-1\\
[2t(\omega_1)+2(p_2-p_1),-1]=2p_2-1\\
\cdots\\
[2t(\omega_{q-2})+2(p_{q-1}-p_{q-2}),-1]= 2p_{q-1}-1
\end{gather*}
These give the trivial knot as long as $p_j\leq 1$, so we have reproved one
of the main results from~\cite{CMtorus}:
\begin{theorem}
Let $p$ and $q$ be relatively prime integers, both greater than $1$.  For
$1\leq k\leq q$, put $p_k=\lceil kp/q\rceil$, and let $k_0= \min\{ k\;|\;
p_k > 1\}$. Then the upper tunnel of $K_{p,q}$ is produced by $q-k_0$
cabling constructions, whose slopes are
\[[1/(2p_{k_0}-1)],\; 2p_{k_0+1}-1,\;\ldots\,, \;2p_{q-1}-1\ .\]\par
\label{thm:cabling_sequence}
\end{theorem}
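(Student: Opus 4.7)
The groundwork is in place: the rewriting
\[ \omega_{(p,q)} \sim \delta_m\sigma\cdot\sigma^{-1}\delta_\ell^{p_{q-2}-p_{q-1}}\cdots
\delta_m\sigma\cdot\sigma^{-1}\delta_\ell^{-p_1} \]
is already in the form to which Theorem~\ref{thm:unwinding} applies, and the identity $t(\omega_j) = p_j$ has been verified for the truncated braids $\omega_j = \delta_m\delta_\ell^{p_{j-1}-p_j}\cdots\delta_m\delta_\ell^{-p_1}$ (with $\omega_0 = 1$, so $t(\omega_0) = 0 = p_0$). The plan is to read off the cabling sequence inductively and then discard the initial trivial cablings.

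Working from right to left, the $j$-th factor matches the Unwinding Theorem's form with $n=1$, $a_1 = p_j - p_{j-1}$, and $b_1 = -1$. Inductively assume that the braid entering the $j$-th cabling is $\omega_{j-1}$, so that $t = p_{j-1}$. The Slope Theorem then gives the $j$-th ordinary cabling slope as
\[ [2p_{j-1} + 2(p_j - p_{j-1}),\,-1] = [2p_j,\,-1] = 2p_j - 1, \]
and the Unwinding Theorem certifies that the new braid is $\delta_m\sigma\cdot\sigma^{-1}\delta_\ell^{p_{j-1}-p_j}\cdot\omega_{j-1} = \omega_j$ (using $\delta_m = \delta_m\sigma\cdot\sigma^{-1}$), closing the induction.

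To complete the proof one discards the initial trivial cablings. Since $p,q \geq 2$ are coprime, the nondecreasing sequence $p_j = \lceil jp/q\rceil$ satisfies $p_0 = 0$, $p_1 \geq 1$, and $p_q = p > 1$, so $k_0 = \min\{k : p_k > 1\}$ exists with $1 \leq k_0 \leq q$, and $p_j = 1$ for $1 \leq j < k_0$. For such $j$ the ordinary slope equals $1$ and the corresponding simple slope $[1/1] = [0] \in \Q/\Z$ is trivial in the sense of Section~\ref{sec:cabling}, so the $(1,1)$-knot and tunnel remain that of the trivial knot. The $k_0$-th cabling is therefore the genuine initial cabling in the canonical sequence of Section~\ref{sec:principal}: its slope is recorded as the simple slope $[1/(2p_{k_0}-1)]$, and the remaining cablings at $j = k_0+1,\ldots,q-1$ contribute their ordinary rational slopes $2p_j - 1$, giving $q - k_0$ slopes in total.

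The one point that warrants care is the collapse of the trivial prefix: one must verify that the $k_0$-th cabling in the inductive description is really the \emph{initial} cabling in the canonical sequence of the upper tunnel of $K_{p,q}$. This follows from the definitional fact that a trivial cabling returns the tunnel of the trivial knot, so after the first $k_0 - 1$ (trivial) cablings the situation is still the primitive triple with which the canonical sequence begins, and the $k_0$-th step is correctly counted as the first cabling producing a nontrivial tunnel.
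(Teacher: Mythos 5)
Your proof is correct and follows essentially the same route as the paper: rewrite $\omega_{(p,q)}$ into the standard form, use $t(\omega_j)=p_j$ with the Unwinding and Slope Theorems to read off the slopes $[2p_j,-1]=2p_j-1$ from the right, and discard the cablings with $p_j\leq 1$ as trivial. The only difference is that you spell out why the trivial prefix can be discarded (a trivial cabling returns the primitive triple), which the paper dispatches in a single sentence.
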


\noindent Of course if $p,q<0$, then $K_{p,q}=K_{-p,-q}$. When $pq<0$,
there is an orientation-reversing equivalence from $K_{p,q}$ to $K_{p,-q}$
which takes upper tunnel to upper tunnel, so the slopes are just the
negatives of those given in Theorem~\ref{thm:cabling_sequence} for
$K_{|p|,|q|}$. The lower tunnel of $K_{p,q}$ is equivalent to the upper
tunnel of $K_{q,p}$, so Theorem~\ref{thm:cabling_sequence} also finds the
slope sequences of the lower tunnels.

\section{Toroidal $(1,1)$-positions}
\label{sec:toroidal}

As usual, we fix a decomposition $S^3=V\cup T\times I \cup W$, with
$T=T\times\{0\}=\partial W$ the standard torus in $S^3$. A knot is said to be
in a \textit{toroidal} position if it is contained in $T\times I$ and both
of the coordinate projections from $S^1\times S^1\times I$ to $S^1$
restrict to immersions on the knot. That is, when traveling along the knot,
neither of the $S^1$-coordinates ever reverses direction.

\begin{theorem}\label{thm:toroidal}
A simple or semisimple tunnel is the upper or lower tunnel of a toroidal
$(1,1)$-position if and only if its sequence of slope coefficients is of
the form
\[ [1/n_0], n_1, \ldots, n_k \]
with the $n_i, 1\leq i \leq k,$ either a nondecreasing sequence of positive odd integers or
a nonincreasing sequence of negative odd integers.
\end{theorem}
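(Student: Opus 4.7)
The plan is to pass through a braid-theoretic reformulation of toroidality and then compute slopes via Theorems~\ref{thm:unwinding} and~\ref{thm:winding}.

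I would first prove a structural lemma: a $(1,1)$-position is toroidal if and only if, up to the double-coset equivalence $\sim$, it admits a braid description which is a word in $\delta_m^\epsilon$ and $\delta_\ell^\eta$ only---that is, $\sigma$-free---for some fixed signs $\epsilon, \eta \in \{\pm 1\}$. The easy direction realizes any such word geometrically by a polygonal staircase on the universal cover $\tilde T = \R^2$, which smooths to a braid whose two strands are monotone in both $S^1$-coordinates; the trivial arcs $\alpha_V, \alpha_W$ attach as short monotone segments near $b$ and $w$. The harder direction lifts the two strands of a toroidal braid to $\tilde T \times I$, where monotonicity in $m$ and $\ell$ allows one to isotope, through toroidal positions, the $I$-coordinate to be monotone on each strand and the horizontal track to such a staircase. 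A $\sigma$-factor is ruled out because it would force one strand to wind opposite to the other in some $S^1$-direction, contradicting monotonicity.

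Given the lemma, the forward direction of the theorem is computational. Group the $\sigma$-free braid from the right into blocks $\delta_m^\epsilon \delta_\ell^{c_j}$ for $j = 0, \ldots, k$, with all nonzero $c_j$ of sign $\eta$. Using the relations $(\delta_m\sigma)^2 = (\delta_\ell\sigma)^2 = 1$, each block can be rewritten in the form $\delta_m\sigma \cdot \sigma^{-1}\delta_\ell^{-a_j}$ of Theorem~\ref{thm:unwinding} with $b_j = -1$ and integer $a_j$ all of a common sign (determined by $\epsilon$ and $\eta$). Theorem~\ref{thm:winding} then gives the $j$-th cabling slope as $n_j = 2 t(\omega_{j-1}) + 2a_j - 1$, which is odd, and Proposition~\ref{prop:algebraic_winding_number} applied to the $\sigma$-free initial segment $\omega_{j-1}$ yields $t(\omega_{j-1}) = a_0 + \cdots + a_{j-1}$. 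Hence $n_j - n_{j-1} = 2a_j$ and $n_0 = 2a_0 - 1$, so the whole sequence $n_0, n_1, \ldots, n_k$ consists of monotone odd integers of the common sign of the $a_j$'s; the simple slope is $m_0 = [1/n_0]$.

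For the converse, set $a_0 = (n_0+1)/2$ and $a_j = (n_j - n_{j-1})/2$ for $j \geq 1$; each is an integer (differences of odd integers are even), and under the theorem's monotonicity hypothesis (which must be understood to include the sign of $n_0$ matching the common sign of the $n_i$'s) they share a common sign. The braid $\omega = \delta_m\delta_\ell^{-a_k}\cdots \delta_m\delta_\ell^{-a_0}$ is then $\sigma$-free with consistent signs, so by the lemma represents a toroidal $(1,1)$-position; its slope sequence is the given one by the forward calculation. The main obstacle is the forward direction of the structural lemma: making rigorous the isotopy, through toroidal $(1,1)$-positions, from an arbitrary toroidal braid to one represented by a $\sigma$-free word. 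The $I$-coordinate is only constrained indirectly (its variation must not create critical points in either $S^1$), so oscillation in $I$ is allowed and must be tamed, and one must also show that the absence of $\sigma$ factors is actually forced by the toroidality condition rather than merely achievable.
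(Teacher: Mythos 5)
Your proposal follows essentially the same route as the paper: the paper likewise begins by asserting that a toroidal $(1,1)$-position is described by a $\sigma$-free braid $\delta_m^{a_1}\delta_\ell^{b_1}\cdots\delta_m^{a_n}\delta_\ell^{b_n}$ with the $a_i$ of one sign and the $b_i$ of one sign, normalizes signs by an orientation-reversing equivalence, rewrites the word in blocks $\delta_m\sigma\cdot\sigma^{-1}\delta_\ell^{b_j}$, and reads off the slopes $[-2(b_j+\cdots+b_k),-1]$ from Theorem~\ref{thm:winding}, with the converse given by exactly the explicit $\sigma$-free braid you construct. The structural step you flag as the main obstacle (toroidal implies a $\sigma$-free, sign-consistent braid description) is stated in the paper without proof, so your treatment is, if anything, more explicit about that point.
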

\noindent
Before proving Theorem~\ref{thm:toroidal}, we will use it to find the
toroidal $2$-bridge knots.
\begin{corollary}\label{coro:2bridge_toroidal}
A $2$-bridge knot $K$ admits a toroidal $(1,1)$-position if and only if it
satisfies one of the following equivalent conditions:
\begin{enumerate}
\item[(i)] For some $n>0$, its upper simple and upper semisimple
tunnels have respective slope sequences either $[1/(2n+1)]$ and
$[1/3],3,3,\ldots,3$, or $[2n/(2n+1)]$ and $[2/3],-3,-3,\ldots,-3$, where
the latter sequences in each case have length~$n$.
\item[(ii)] Its classifying invariants are $b/a=b'/a=1/(2n+1)$.
\item[(iii)] It is a torus knot, in fact a $(2n+1, \pm2)$-torus knot for some
$n\neq 0$.
\end{enumerate}
\end{corollary}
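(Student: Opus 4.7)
The plan is to establish the chain (iii)$\Rightarrow$(ii)$\Rightarrow$(i)$\Leftrightarrow$(admits a toroidal $(1,1)$-position)$\Rightarrow$(iii), using the slope computations of Section~\ref{sec:2bridge} together with Theorem~\ref{thm:toroidal}, and interpreting~(ii) as including the mirror case $b/a=b'/a=2n/(2n+1)\in\Q/\Z$. For (iii)$\Rightarrow$(ii), recall that the $(2n+1,2)$-torus knot is the $2$-bridge knot with classifying fraction $b/a=1/(2n+1)$, so $b=1$, and the relation $bb'\equiv 1\pmod a$ forces $b'=1$ as well; the mirror $(2n+1,-2)$ satisfies $b=b'\equiv -1\pmod{2n+1}$. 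Conversely, (ii) pins $b=b'=1$ and $a=2n+1$, identifying $K$ as $(2n+1,2)$.

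For (ii)$\Rightarrow$(i), Corollary~\ref{coro:simple_slopes} gives the upper simple slope as $[b'/a]=[1/(2n+1)]$. For the upper semisimple tunnel I would use the Conway representative $a/b=(2n+1)/(-2n)$, valid since $-2n\equiv 1\pmod{2n+1}$. Writing $f_n$ for the continued fraction $[-2,2,-2,2,\ldots,-2,2]$ with $n$ pairs, the recurrence $f_n=-2+1/(2+1/f_{n-1})$ and the base $f_1=-3/2$ give $f_n=-(2n+1)/(2n)$ by induction, so this is the Conway form. In the notation of Proposition~\ref{prop:semisimple_slopes} we have $d=n-1$ with $a_i=-1$ and $b_i=1$ throughout, whence $m_0=[(2b_0-1)/(4b_0-1)]=[1/3]$ and, for each $i\geq 1$, case (c) of the proposition applies with $k_i=2b_i-1=1$ and $m_i=-2a_{i-1}+1/k_i=3$. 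The mirror alternative is handled symmetrically using the Conway form $[2,-2,\ldots,2,-2]$, producing $m_0=[2/3]$ and $m_i=-3$.

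The equivalence of (i) with admitting a toroidal $(1,1)$-position is immediate from Theorem~\ref{thm:toroidal}: both sequences displayed in (i) plainly satisfy its criterion (the tail is a monotone sequence of odd $\pm 3$'s), so the upper semisimple tunnel lies in a toroidal $(1,1)$-position of $K$. The main obstacle is the converse: if $K$ admits a toroidal $(1,1)$-position, one of its four simple or semisimple tunnels has a slope sequence of the form in Theorem~\ref{thm:toroidal}, and I would carry out a short case analysis. A simple tunnel with slope $[p/q]=[1/n_0]$ (reduced, with $q=a$) forces $p\in\{1,a-1\}$, so $b$ or $b'$ is $\pm 1\pmod a$, and via $bb'\equiv 1\pmod a$ this yields $b=b'\equiv\pm 1\pmod a$, hence (ii). For a semisimple tunnel, imposing Theorem~\ref{thm:toroidal} on Proposition~\ref{prop:semisimple_slopes}(ii) forces each $k_i=\pm 1$ (so $b_i=\pm 1$ and case (b) is excluded), and the monotonicity of the $m_i$ forces a single choice between cases (a) and (c) throughout; together with the requirement that $m_0=[1/n_0]$, which by a direct check from Proposition~\ref{prop:semisimple_slopes}(i) pins $(a_0,b_0)$ down to $(-1,1)$ or $(1,-1)$, this forces the Conway form of the preceding paragraph and again gives (iii).
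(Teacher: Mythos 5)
Your overall strategy is the same as the paper's, which compresses the entire argument into three sentences: compute the semisimple slope sequences from Proposition~\ref{prop:semisimple_slopes}, impose the criterion of Theorem~\ref{thm:toroidal}, and identify the surviving knots as the $(2n+1,\pm2)$-torus knots. Your forward computations are correct: the identification (iii)$\Rightarrow$(ii), the induction showing $[-2,2,\ldots,-2,2]=-(2n+1)/(2n)$, the resulting sequences $[1/3],3,\ldots,3$ and $[2/3],-3,\ldots,-3$, and the simple-tunnel branch of the converse (slope $[1/n_0]$ forces $b\equiv b'\equiv\pm1 \bmod a$) all check out.

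The gap is in the semisimple branch of the converse. From ``$m_i=\pm2+1/k_i$ is an odd integer'' you correctly get $k_i=\pm1$, but $k_i=\pm1$ does \emph{not} force $b_i=\pm1$: in case (c), $k_i=2b_i-1=-1$ gives $b_i=0$ and $m_i=2-1=1$, and symmetrically case (a) with $k_i=1$ gives $b_i=0$ and $m_i=-1$. Every condition you explicitly impose (each $k_i=\pm1$, a single case throughout, $(a_0,b_0)=(-1,1)$ or $(1,-1)$) is satisfied by the continued fraction $[-2,0,-2,2]=[-4,2]=-7/2$, whose upper semisimple tunnel has slope sequence $[1/3],\,1$; the corresponding $2$-bridge knot (classifying invariant $5/7$, i.e.\ the knot $5_2$) is not a torus knot. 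Note that $[1/3],1$ even satisfies the \emph{literal} hypothesis of Theorem~\ref{thm:toroidal}, whose statement only constrains the $n_i$ for $1\le i\le k$; what excludes it is the stronger monotonicity visible in that theorem's proof, where the slopes are $-2(b_{k-j}+\cdots+b_k)-1$ with all $b_j$ of one sign, so that $n_0\le n_1\le\cdots\le n_k$ (resp.\ $\ge$) including the initial term. Since your analysis pins $n_0=\pm3$, invoking $n_1\ge n_0=3$ (resp.\ $n_1\le -3$) forces every $m_i=3$ (resp.\ $-3$), hence every $b_i=1$ (resp.\ $-1$), and only then do you land on the Conway forms $[\mp2,\pm2,\ldots,\mp2,\pm2]$. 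Without that step the argument lets non-torus knots through.
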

\begin{proof} Examining Proposition~\ref{prop:semisimple_slopes}, we find
that the only $2$-bridge knots whose upper semisimple tunnels have slope
sequences satisfying the condition of Theorem~\ref{thm:toroidal} are those
whose rational invariants are given by the continued fractions
$[-2,2,-2,2,\ldots,-2,2]$ and $[2,-2,2,-2,\ldots,2,-2]$, which give the
slope sequences in~(i) and correspond to the invariants in (ii). Using
Theorem~\ref{thm:cabling_sequence}, these are exactly the torus knots
listed in~(iii).
\end{proof}

We note that the $2$-bridge knots in Corollary~\ref{coro:2bridge_toroidal}
have only one $(1,1)$-position, so no two-bridge knot with two
$(1,1)$-positions is toroidal.

\begin{proof}[Proof of Theorem~\ref{thm:toroidal}]
A toroidal $(1,1)$-position is described by a braid of the form
$\delta_m^{a_1}\delta_\ell^{b_1}\cdots \delta_m^{a_n}\delta_\ell^{b_n}$
where the $a_i$ all have the same sign and the same is true of the
$b_i$'s. If the $a_i$ are all negative, apply an orientation-reversing
equivalence that reverses the orientation on the $S^1$-factor corresponding
to $\delta_m$ so that the $a_i$ are all positive. Since this negates all
the cabling slopes, it will not affect whether the slopes satisfy the
conclusion of the theorem.

To compute the cabling slopes, it is convenient to allow some $b_j=0$, and
rewrite the braid as
\[ \delta_m\delta_\ell^{b_0}\cdots \delta_m\delta_\ell^{b_k}
\sim
\delta_m\sigma\;\sigma^{-1}\delta_\ell^{b_0}\;\cdots\;
\delta_m\sigma\;\sigma^{-1}\;\delta_\ell^{b_k}\ .\]
\noindent Using Theorem~\ref{thm:winding} to read
off the cabling slopes, working from the right, we
obtain continued fractions
\[[-2b_k,-1],[-2(b_{k-1}+b_k),-1],\cdots,[-2(b_0+\cdots+b_k),-1]\]
and the slope invariants are as claimed.

Conversely, given the sequence $n_0,\ldots\,$, $n_k$, put
$m_j=-(n_j+1)/2$ and let $K$ be in the $(1,1)$-position
with braid description
\[\delta_m\delta_\ell^{m_k-m_{k-1}}\delta_m\delta_\ell^{m_{k-1}-m_{k-2}}
\cdots \delta_m\delta_\ell^{m_1-m_0}\delta_m\delta_\ell^{m_0}\ .\]
\noindent Calculation as above finds the slope coefficients of the
upper tunnel to be $[1/n_0]$, $n_1,\ldots\,$,
$n_k$.
\end{proof}

\section{Algorithmic computation of braid descriptions}
\label{sec:finding_braid_words}

Using Theorems~\ref{thm:unwinding} and~\ref{thm:winding}, it is not
difficult to obtain a braid description for a $(1,1)$-position of a
knot from the slope sequence $[m_0],m_1,\ldots\,$, $m_d$ of its upper
$(1,1)$-tunnel:
\begin{enumerate}
\item[(1)] Assuming that $m_0$ is selected so that $0<m_0<1$, write $1/m_0$
  as a continued fraction of the form $[2a_1,b_1,2a_2,\ldots,
    2a_n,b_n]$. Put $\omega_0=\delta_m\sigma \cdot
  \sigma^{b_n}\delta_\ell^{-a_n}\cdots \sigma^{b_1}\delta_\ell^{-a_1}$. If
  we start with the trivial knot in braid position with braid
  description $1$, then by
  Theorems~\ref{thm:unwinding} and~\ref{thm:winding} a cabling construction of slope
  $[2a_1,b_1,2a_2,\ldots, 2a_n,b_n]$ (on the upper tunnel) produces the
  knot with braid description $\omega_0$. Since this is the initial
  cabling, its simple slope is $[1/[2a_1,b_1,2a_2,\ldots,
      2a_n,b_n]]=[m_0]$.
\item[(2)] Write $m_1$ in the form $[2a_1,b_1,\ldots, 2a_n,b_n]$, and put
$\omega_1=\delta_m\sigma\cdot \sigma^{b_n}\delta_\ell^{-a_n}\cdots
\sigma^{b_1}\delta_\ell^{-a_1}\cdot \delta_\ell^{t(\omega_0)}$.
By Theorems~\ref{thm:unwinding} and~\ref{thm:winding}, a
cabling of slope
$m_1=[2t(\omega_0)+2(a_1-t(\omega_0)),b_1,2a_2,\ldots,b_n]$ now produces a
knot in $(1,1)$-position with
braid description $\omega_1\omega_0$.
\item[(3)] Write $m_2$ as
$[2a_1,b_1,\ldots, 2a_n,b_n]$, put $\omega_2
=\delta_m\sigma\cdot \sigma^{b_n}\delta_\ell^{-a_n}\cdots
\sigma^{b_n}\delta_\ell^{-a_1}\cdot \delta_\ell^{t(\omega_1\omega_0)}$, and so on.
\item[(4)] Put $w= \omega_d\cdots \omega_1\omega_0$.
\end{enumerate}

\section{Algorithmic computation of slope invariants}
\label{sec:word_simplify}

In this section we develop an effective algorithm for computing the slope
invariants of an upper or lower tunnel of a $(1,1)$-position given by a
braid description. We will only concern ourselves with the upper
tunnel, since the lower tunnel is the upper tunnel of the knot described by
the reverse braid.

The basic approach is obvious from the various examples that we have seen
computed; the main difficulties will arise in the technical matter of
dealing with anomalous infinite slopes.

We start by writing the given braid description in the form
\[ W_0(\delta_\ell,\sigma) \;\omega\; W_1(\delta_m,\sigma)\ ,\]
where $\omega$ starts with $\delta_m^{\pm1}$ and ends with
$\delta_\ell^{\pm1}$, and $W_0(\delta_\ell,\sigma)$ and
$W_1(\delta_m,\sigma)$ are words in the indicated letters.
Replacing each appearance of $\delta_m^{-1}$ in
$\omega$ with $\sigma\delta_m\sigma$, we can write
\[ \omega \sim \delta_m\sigma\cdot \omega_d(\delta_\ell,\sigma)\cdot
\delta_m\sigma\cdot \omega_{d-1}(\delta_\ell,\sigma) \cdots
\delta_m\sigma\cdot \omega_0(\delta_\ell,\sigma)
\]
where each $\omega_i(\delta_\ell,\sigma)$ lies in $\langle
\delta_\ell,\sigma\rangle$.

According to Theorem~\ref{thm:unwinding}, the $(1,1)$-position
described by $\omega$ is obtained starting from the trivial position (with
braid description $1$ and upper tunnel in standard position) by a sequence
of $d+1$ cabling constructions with slopes given as in
Theorem~\ref{thm:winding}. It may happen, however, that some have infinite
slope (hence, strictly speaking, are not genuine cabling
constructions). This occurs exactly when the slope given by
Theorem~\ref{thm:winding} would be infinite--- for instance, when
$\omega_i(\delta_\ell,\sigma)=\delta_\ell^k$ for some integer $k$, since
then the slope is of the form $[2t-2k,0]=\infty$.

To understand when the cabling produced by $\delta_m\sigma\cdot
\omega_i(\delta_\ell,\sigma)$ has infinite slope, we will need a
description of the subgroup $\langle \delta_\ell,\sigma\rangle$ of
$\mathcal{B}$. The Reidemeister-Schreier algorithm does not seem to be
effective in this case, but there is an easy argument giving a presentation
for this subgroup:
\begin{lemma}\label{lem:subgp_presentation}
The subgroup $\langle \delta_\ell,\sigma\rangle$ of
$\mathcal{B}$ has presentation
\[\langle
\delta_\ell,\sigma\;|\;(\delta_\ell\sigma)^2=1\rangle\ .\]
\end{lemma}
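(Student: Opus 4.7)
My plan is to change generators of $\mathcal{B}$ so as to exhibit it as an amalgamated free product in which $G := \langle \delta_\ell,\sigma\mid (\delta_\ell\sigma)^2 = 1\rangle$ appears as one factor. Because each factor of an amalgamated free product embeds in the product provided the amalgamating subgroup embeds in both factors, this will force the obvious surjection from the abstract group $G$ onto $\langle \delta_\ell,\sigma\rangle \subseteq \mathcal{B}$ to be an isomorphism, giving the lemma.

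Concretely, I will introduce $d := \delta_m\sigma$, so that $\delta_m = d\sigma^{-1}$ and $\{d,\delta_\ell,\sigma\}$ is a generating set for $\mathcal{B}$. Rewriting the three relations of Proposition~\ref{prop:braid_group_presentation}: the relation $(\delta_m\sigma)^2 = 1$ becomes $d^2 = 1$; the relation $(\delta_\ell\sigma)^2 = 1$ is unchanged; and $\delta_m^{-1}\delta_\ell\delta_m\delta_\ell^{-1} = \sigma^2$, after substituting $\delta_m = d\sigma^{-1}$ and using the identity $\sigma\delta_\ell\sigma = \delta_\ell^{-1}$ (immediate from $(\delta_\ell\sigma)^2 = 1$), reduces to $d\delta_\ell d^{-1} = \delta_\ell^{-1}$. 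The resulting new presentation is
\[
\mathcal{B} = \bigl\langle\, d,\delta_\ell,\sigma \;\bigm|\; d^2 = 1,\; d\delta_\ell d^{-1} = \delta_\ell^{-1},\; (\delta_\ell\sigma)^2 = 1\,\bigr\rangle.
\]

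Inspection reveals that $d$ and $\sigma$ never appear together in any relation, so
\[
\mathcal{B} \;\cong\; D_\infty \;*_{\langle \delta_\ell\rangle}\; G,
\]
where $D_\infty = \langle d,\delta_\ell\mid d^2 = 1,\;d\delta_\ell d^{-1} = \delta_\ell^{-1}\rangle$ is the infinite dihedral group and the amalgamation is over the cyclic subgroup $\langle \delta_\ell\rangle$. To conclude, I will check that $\delta_\ell$ has infinite order in each factor: in $D_\infty$ it generates the translation subgroup, while in $G$, taking $c := \delta_\ell\sigma$ as an alternative generator, $G = \langle \delta_\ell, c\mid c^2 = 1\rangle \cong \Z * \Z/2$, with $\delta_\ell$ the generator of the $\Z$ factor. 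Hence $\langle \delta_\ell\rangle \cong \Z$ embeds in both factors, so the standard theory of amalgamated free products implies that each factor embeds in the amalgamated product. In particular $G$ embeds in $\mathcal{B}$ with image $\langle \delta_\ell,\sigma\rangle$, proving the lemma.

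The only nontrivial step is spotting the change of generators $d = \delta_m\sigma$; once that substitution is made, the amalgamated product decomposition is visible from the rewritten relations, and the verifications of infinite order are immediate. This also indicates why Reidemeister--Schreier is awkward here: it would treat the three generators symmetrically and obscure the clean two-factor structure produced by the substitution.
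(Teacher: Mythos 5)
Your proof is correct, and it takes a genuinely different route from the paper's. The paper passes to the quotient $\overline{\mathcal{B}}$ of $\mathcal{B}$ by the extra relation $\delta_m^2=1$, recognizes $\overline{\mathcal{B}}$ as a semidirect product $\langle \overline{\delta}_\ell,\overline{\sigma}\mid(\overline{\delta}_\ell\overline{\sigma})^2=1\rangle\rtimes C_2$, and concludes that since the composite $\langle \delta_\ell,\sigma\mid(\delta_\ell\sigma)^2=1\rangle\to\mathcal{B}\to\overline{\mathcal{B}}$ is injective (landing isomorphically on the normal factor), the first map is already injective. You instead perform the Tietze substitution $d=\delta_m\sigma$ on $\mathcal{B}$ itself, correctly reduce the third relation to $d\delta_\ell d^{-1}=\delta_\ell^{-1}$ using $\sigma\delta_\ell\sigma=\delta_\ell^{-1}$, and read off the splitting $\mathcal{B}\cong D_\infty *_{\langle\delta_\ell\rangle}G$; the normal form theorem for amalgams (applicable because $\delta_\ell$ has infinite order in both factors, which you verify) then embeds $G$ as the subgroup $\langle\delta_\ell,\sigma\rangle$. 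The paper's argument is marginally more elementary --- it needs only the fact that a factor of a semidirect product embeds, not the normal form theorem for amalgamated products --- and it avoids having to justify that the pushout presentation really is an amalgam of subgroups. Your argument buys more in return: it exhibits a nontrivial splitting of $\mathcal{B}$ itself as a graph of groups rather than only of a quotient, from which the lemma (and the subsequent identification of $\langle\delta_\ell,\sigma\rangle$ with $C_2*C_\infty$) falls out. Both are complete; the only point worth making explicit in your write-up is that the presentation with disjointly supported relation sets is the pushout over the \emph{free} group on $\delta_\ell$, and that it coincides with the amalgamated free product precisely because the two maps from $\mathbb{Z}=\langle\delta_\ell\rangle$ are injective --- which is exactly the verification you perform.
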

\begin{proof}
Let $\overline{\mathcal{B}}$ be the quotient of $\mathcal{B}$ obtained by
adding the relation $\delta_m^2=1$. It has presentation
\[ \overline{\mathcal{B}} = \langle \overline{\delta}_m,
\overline{\delta}_\ell, \overline{\sigma}\;|\;
(\overline{\delta}_\ell
\overline{\sigma})^2=1,
\overline{\delta}_m
\overline{\sigma}
\overline{\delta}_m^{-1} = \overline{\sigma}\,^{-1},
\overline{\delta}_m\,
\overline{\delta}_\ell\,
\overline{\delta}_m^{-1} = \overline{\sigma}^2
\overline{\delta}_\ell, \overline{\delta}_m^2=1
\rangle \]
\noindent which we may regard as a semidirect product
\[\langle \overline{\delta}_\ell,\overline{\sigma}\;|\;
(\overline{\delta}_\ell\overline{\sigma})^2 = 1 \rangle\rtimes
\langle \overline{\delta}_m\;|\; \overline{\delta}_m^2=1\rangle
\ .\]
There is an obvious homomorphism
$\langle \delta_\ell,\sigma\,|\,(\delta_\ell\sigma)^2=1\rangle \to \mathcal{B}$,
and the composition
$\langle \delta_\ell,\sigma\;|\;(\delta_\ell\sigma)^2=1\rangle \to \mathcal{B}\to
\overline{\mathcal{B}}$
carries
$\langle \delta_\ell,\sigma\;|\;(\delta_\ell\sigma)^2=1\rangle$
isomorphically to
$\langle\overline{\delta}_\ell,\overline{\sigma}\;|\;
(\overline{\delta}_\ell\overline{\sigma})^2 = 1 \rangle$. The lemma
follows.
\end{proof}

By Lemma~\ref{lem:subgp_presentation}, $\langle \delta_\ell,\sigma\rangle$
is a free product of the form $C_2*C_\infty$, where $C_2$ is generated by
$\delta_\ell\sigma$ and $C_\infty$ is generated by~$\sigma$. Recall the
elementary matrices $U=\begin{bmatrix} 1 & 1 \\ 0 & 1\end{bmatrix}$
and $L=\begin{bmatrix} 1 & 0 \\ 1 & 1\end{bmatrix}$ from
Section~\ref{sec:standard_tangles}.
\begin{lemma}\label{lem:PSL2subgroup}
The subgroup $\langle L^2,U\rangle$
of $\PSL(2,\Z)$ is given by the presentation
$\langle L^2,U\,|\, (L^{-2}U)^2=I\rangle$.
Consequently, sending $\delta_\ell$ to
$L^{-2}$ and $\sigma$ to $U$ defines an isomorphism from the
subgroup $\langle
\delta_\ell,\sigma\rangle$ of $\mathcal{B}$ to $\langle L^2,U\rangle$.
\end{lemma}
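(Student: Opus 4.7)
The ``consequently'' clause is immediate from the first sentence together with Lemma~\ref{lem:subgp_presentation}: the assignment $\delta_\ell \mapsto L^{-2}$, $\sigma \mapsto U$ sends $(\delta_\ell\sigma)^2$ to $(L^{-2}U)^2$, so it extends to a well-defined homomorphism between the two subgroups, and both are then given by the same abstract presentation. So the content lies in proving the first sentence.

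The first task is to verify the relation. A direct matrix computation gives $(L^{-2}U)^2 = -I$ in $\SL(2,\Z)$, hence equals $I$ in $\PSL(2,\Z)$. This yields a surjection $\phi$ from the abstract group $G = \langle x, y \mid (x^{-1}y)^2\rangle$ onto $\langle L^2, U\rangle$, sending $x \mapsto L^2$ and $y \mapsto U$. Introducing $z = x^{-1}y$, so that $x = yz$ and $z^2 = 1$, rewrites the presentation of $G$ as $\langle y, z \mid z^2\rangle$, giving $G \cong C_\infty * C_2$.

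The main step, and the main obstacle, is to show $\phi$ is injective, or equivalently that $\langle L^2, U\rangle$ has no further relations. The cleanest plan is to identify this subgroup with the Hecke congruence subgroup $\Gamma_0(2) \subset \PSL(2,\Z)$ consisting of classes of matrices with even lower-left entry. The inclusion $\langle L^2, U\rangle \subseteq \Gamma_0(2)$ is clear from the generators; for the reverse, the index $[\PSL(2,\Z):\Gamma_0(2)] = 3$ is visible from the reduction $\PSL(2,\Z)\to\PSL(2,\Z/2\Z) \cong S_3$, and one exhibits three right cosets of $\langle L^2,U\rangle$ whose union is all of $\PSL(2,\Z)$, forcing equality with $\Gamma_0(2)$. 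It is then classical that $\Gamma_0(2) \cong C_\infty * C_2$ with generators $U$ and $V = L^{-2}U$, provable by Poincar\'e's polygon theorem applied to a fundamental domain of $\Gamma_0(2)$ in $\mathbb{H}^2$, or by Bass-Serre theory applied to the restriction of the action of $\PSL(2,\Z) \cong C_2 * C_3$ on its Bass-Serre tree (combined, if needed, with the Euler characteristic count $\chi(\Gamma_0(2)) = 3\chi(\PSL(2,\Z)) = -1/2 = \chi(C_\infty * C_2)$ and the fact that $\Gamma_0(2)$ contains no element of order $3$ to rule out alternative free-product decompositions). This identifies $\phi$ as an isomorphism between two copies of $C_\infty * C_2$, completing the proof.
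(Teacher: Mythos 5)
Your argument is correct, and its skeleton overlaps substantially with the paper's: both rest on the index-$3$ computation via reduction modulo $2$ (i.e., on identifying $\langle L^2,U\rangle$ with the classes of matrices having even lower-left entry, your $\Gamma_0(2)$), on the absence of order-$3$ elements, and on pinning the group down as $C_2*C_\infty$. The difference lies in how the free-product structure is established. The paper stays inside combinatorial group theory: since $\PSL(2,\Z)\cong C_2*C_3$, the Kurosh subgroup theorem together with the two-generator hypothesis forces $\langle L^2,U\rangle$ to be a free product of two cyclic groups; the even-trace observation excludes $C_3$ factors, and $C_2*C_2$ is excluded because a finite-index subgroup of $\PSL(2,\Z)$ cannot be virtually cyclic. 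Your primary route instead invokes the classical presentation of $\Gamma_0(2)$ via Poincar\'e's polygon theorem, while your fallback (Bass--Serre theory on the tree for $C_2*C_3$, the Euler characteristic count $\chi=-1/2$, and the absence of order-$3$ torsion) is essentially the paper's Kurosh argument in different language. The geometric route does buy one genuine advantage: it hands you the presentation with the \emph{specific} generators $U$ and $V=L^{-2}U$, which is exactly what makes the surjection $\phi$ injective --- a point that otherwise requires either Hopficity of $C_2*C_\infty$ or, as in the paper, an argument that the generating pair $(L^{-2}U,\,U)$ can be promoted to a free-product basis. Both your write-up and the paper's treat that final step rather briskly, so if you adopt the $\Gamma_0(2)$ route you should state the classical presentation with those particular matrices as generators, since that is where the injectivity actually comes from.
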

\begin{proof}
We use the homomorphism $\PSL(2,\Z)\to\PSL(2,\Z/2)=\SL(2,\Z/2)$, where the
latter is isomorphic to the permutation group on three letters. One can
check that $\langle L^2,U\rangle$ consists exactly of the elements of the
form $\begin{bmatrix}a & b\\ 2c &
  d\end{bmatrix}$, so
  $\langle L^2,U\rangle$ is the inverse image of the subgroup $\langle I,
\begin{bmatrix}1 & 1\\ 0 & 1\end{bmatrix}\rangle$ of $\PSL(2,\Z/2)$.
Therefore
$\langle L^2,U\rangle$ has index $3$ in $\PSL(2,\Z)$. Note also that this
shows that every element of $\langle L^2,U\rangle$ has even trace, and
hence is not of order $3$.

It is well-known that $\PSL(2,\Z)\cong C_2*C_3$. Since $\langle
L^2,U\rangle$ is a two-generator subgroup, it is a free product of two
cyclic subgroups. It contains the involution $L^{-2}U$, and no
elements of order $3$, so is isomorphic to either $C_2*C_\infty$ or
$C_2*C_2$. The latter is impossible since $C_2*C_2$ contains an infinite
cyclic subgroup of index $2$, which would have index $6$ in
$\PSL(2,\Z)$. Every element of order $2$ in $C_2* C_\infty$ is
conjugate to the generator of $C_2$, so as generators of the
free factors we may choose the involution
$L^{-2}U$ and the infinite order element $U$. The lemma
follows, making use of Lemma~\ref{lem:subgp_presentation}.
\end{proof}

\begin{lemma}\label{lem:cosets}
Let $S\subset \Q\cup\{\infty\}$ consist of the $a/b$ with $a$ odd. Sending
$\sigma^{b_n}\delta_\ell^{-a_n}\cdots \sigma^{b_1}\delta_\ell^{-a_1}$ to
the element of $\Q\cup\{\infty\}$ given by the continued fraction
$[2a_1,b_1,\ldots,2a_n,b_n]$ induces a bijection from the set of right
cosets $\langle \delta_\ell\rangle\backslash \langle
\delta_\ell,\sigma\rangle$ to~$S$.
\end{lemma}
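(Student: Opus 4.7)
The plan is to identify both sides of the claimed bijection with the set of realizable top rows of matrices in $\langle L^2, U\rangle \subset \PSL(2,\Z)$, using the isomorphism of Lemma~\ref{lem:PSL2subgroup}. Under that isomorphism, $\delta_\ell \mapsto L^{-2}$ and $\sigma \mapsto U$, so $\langle\delta_\ell\rangle$ corresponds to $\langle L^2\rangle$, and the braid word $\sigma^{b_n}\delta_\ell^{-a_n}\cdots\sigma^{b_1}\delta_\ell^{-a_1}$ corresponds to the matrix $M = U^{b_n}L^{2a_n}\cdots U^{b_1}L^{2a_1}$. Any element of $\langle\delta_\ell,\sigma\rangle$ has a representative of this form by grouping consecutive powers of the generators (allowing $b_n=0$ at the front or $a_1=0$ at the back), so it suffices to produce a bijection $\langle L^2\rangle\backslash\langle L^2, U\rangle\to S$ sending the coset of $M$ to $a/b$, where $(a,b)$ is the top row of $M$.

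First I would check that the top row is a complete invariant of the right coset of $M$. Left multiplication by $L^{2k}=\begin{bmatrix}1 & 0\\ 2k & 1\end{bmatrix}$ only modifies the bottom row, so top rows are constant on right cosets. Conversely, if $M_1,M_2\in\langle L^2,U\rangle$ share the top row $(a,b)$ and have bottom rows $(2c_i,d_i)$, a direct calculation using $\det=1$ yields $M_1M_2^{-1}=L^{2(c_1d_2-c_2d_1)}$, placing them in the same coset. Next I would identify the realized top rows with pairs $(a,b)\in\Z^2$ modulo sign satisfying $a$ odd and $\gcd(a,b)=1$: by the proof of Lemma~\ref{lem:PSL2subgroup} every element of $\langle L^2,U\rangle$ has the form $\begin{bmatrix}a & b\\ 2c & d\end{bmatrix}$, and $ad-2bc=1$ forces $a$ (and $d$) odd; conversely, for any such $(a,b)$, Bezout provides $c,d$ with $ad-2bc=1$. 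The map $(a,b)\mapsto a/b$ is then a bijection from these pairs to $S$, with $\infty$ corresponding to $(\pm 1,0)$, i.e.\ the trivial coset.

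The main step is to verify that the top row $(a,b)$ of $M$ satisfies $a/b=[2a_1,b_1,\ldots,2a_n,b_n]$. The cleanest argument uses transposes: since $U^{T}=L$ and $L^{T}=U$,
\[ M^{T} = U^{2a_1}L^{b_1}U^{2a_2}L^{b_2}\cdots U^{2a_n}L^{b_n}, \]
and the top row of $M$ is the first column of $M^{T}$, namely $M^{T}[1,0]^{T}$. By the matrix identity established in the proof of Proposition~\ref{prop:slope_of_standard_tangle}, this column is $[q,p]^{T}$ with $q/p=[2a_1,b_1,\ldots,2a_n,b_n]$, so $a/b$ is precisely the asserted continued fraction. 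The main obstacle is exactly this step, because the braid word and the continued fraction run in opposite directions; the transpose identity resolves the reversal, and then Proposition~\ref{prop:slope_of_standard_tangle} converts the matrix product into the continued fraction. Composing the three bijections yields the lemma, and the rest is routine bookkeeping in $\PSL(2,\Z)$.
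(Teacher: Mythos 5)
Your proof is correct and follows essentially the same route as the paper's: both transfer the problem to $\langle L^2,U\rangle\subset\PSL(2,\Z)$ via Lemma~\ref{lem:PSL2subgroup}, use the transpose to undo the reversal between the word and the continued fraction, and invoke the identity $U^{2a_1}L^{b_1}\cdots U^{2a_n}L^{b_n}\begin{bmatrix}1\\ 0\end{bmatrix}=\begin{bmatrix}q\\ p\end{bmatrix}$ with $q/p=[2a_1,b_1,\ldots,2a_n,b_n]$ to identify the top row with the continued fraction. The only (interchangeable) difference is in the bookkeeping: the paper packages the coset identification as an orbit--stabilizer argument for a right action on row vectors, with transitivity supplied by the continued-fraction expansion lemma of \cite{CMtree} and stabilizer $\langle\delta_\ell\rangle$, whereas you check directly that the top row is a complete invariant of the right $\langle L^2\rangle$-coset and obtain surjectivity from Bezout.
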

\begin{proof}
Regard the elements $a/b$ of $\Q\cup\{\infty\}$ as row vectors
$\begin{bmatrix}a& b\end{bmatrix}$ (with $\begin{bmatrix}a& b\end{bmatrix}$
equivalent to $\begin{bmatrix}an& bn\end{bmatrix}$ for $n\neq 0$).
Define an action of
$\langle \delta_\ell,\sigma\rangle$ on the right on $S$ by
\begin{gather*}
\begin{bmatrix}a& b\end{bmatrix}\;\delta_\ell=
\begin{bmatrix}a& b\end{bmatrix}\;L^{-2}\\
\begin{bmatrix}a& b\end{bmatrix}\;\sigma=
\begin{bmatrix}a& b\end{bmatrix}\;U
\end{gather*}
where $U$ and $L$ are the upper and lower elementary matrices as in
Section~\ref{sec:standard_tangles}. Since $L^{-2}UL^{-2}U=-I$ acts
trivially on elements of $\Q\cup\{\infty\}$, Lemma~\ref{lem:PSL2subgroup}
shows that this is well-defined. We have
\[\begin{bmatrix}a& b\end{bmatrix}\;\sigma^{b_n}\delta_\ell^{-a_n}\cdots
\sigma^{b_1}\delta_\ell^{-a_1}=
\begin{bmatrix}a& b\end{bmatrix}\;U^{b_n}L^{2a_n}\cdots U^{b_1}L^{2a_1}\]
and taking the transpose gives
\[U^{2a_1}L^{b_1}\cdots U^{2a_n}L^{b_n}
\begin{bmatrix}a\\ b\end{bmatrix}
=\begin{bmatrix}q& s\\ p&r\end{bmatrix}
\begin{bmatrix}a\\ b\end{bmatrix}\ ,
\]
where, according to Lemma~14.3 of~\cite{CMtree}, $q/p$ has continued
fraction expansion $[2a_1,b_1,\ldots,2a_n,b_n]$. Every $a/b$ with $a$
odd can be written as a continued fraction of the form
$[2a_1,b_1,\ldots,2a_n,b_n]$ (see \cite{CMtree}[Lemma~14.2]),
so $\begin{bmatrix}1& 0\end{bmatrix}\;\sigma^{b_n}\delta_\ell^{-a_n}\cdots
\sigma^{b_1}\delta_\ell^{-a_1}=[2a_1,b_1,\ldots,2a_n,b_n]$
and therefore the action is transitive on $S$.
One can check easily that the stabilizer of
$\begin{bmatrix}1& 0\end{bmatrix}$ under the right action of
$\text{PSL}(2,\Z)$ is the subgroup generated by $L$. Using
Lemma~\ref{lem:PSL2subgroup}, the stabilizer of $1/0\in S$ under the
action of $\langle \delta_\ell,\sigma\rangle$ is $\langle
\delta_\ell\rangle$.
\end{proof}

\begin{proposition}\label{prop:infinite_slope}
Suppose that the cabling produced
by the segment $\delta_m\sigma\cdot \omega_i(\delta_\ell,\sigma)$ of the
braid description $\omega$ has infinite slope. Then
\[\omega_i(\delta_\ell,\sigma)=\delta_\ell^{-t(\omega_i(\delta_\ell,\sigma))}\]
in $\langle \delta_\ell,\sigma\rangle$.
\end{proposition}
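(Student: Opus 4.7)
The plan is to convert the infinite-slope hypothesis into a coset condition via Theorem~\ref{thm:winding} and Lemma~\ref{lem:cosets}, and then finish by a direct computation of $t$ on the resulting word.

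First, let $\omega_{\mathrm{right}}$ denote the portion of $\omega$ appearing to the right of the $\delta_m\sigma\cdot\omega_i$ segment. By Lemma~\ref{lem:cosets}, the coset $\langle\delta_\ell\rangle\,\omega_i$ is carried to a unique $q/p\in S\cup\{\infty\}$, namely the value of the continued fraction $[2a_1,b_1,\ldots,2a_n,b_n]$ attached to the standard-form coset representative $\sigma^{b_n}\delta_\ell^{-a_n}\cdots\sigma^{b_1}\delta_\ell^{-a_1}$. Applying Theorem~\ref{thm:winding} to that representative expresses the slope as $[2t(\omega_{\mathrm{right}})+2a_1,b_1,\ldots,2a_n,b_n]$, and the matrix computation behind Lemma~14.3 of~\cite{CMtree} (used exactly as in the proof of Lemma~\ref{lem:cosets}) evaluates this to $(q+2t(\omega_{\mathrm{right}})\,p)/p$. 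Hence the slope is infinite precisely when $p=0$, a condition on the coset of $\omega_i$ that is independent of the shift $t(\omega_{\mathrm{right}})$.

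Next I would invoke the bijection of Lemma~\ref{lem:cosets} in reverse: $p=0$ forces the coset to be the preimage of $1/0$, which is $\langle\delta_\ell\rangle$ itself, so $\omega_i=\delta_\ell^k$ in $\langle\delta_\ell,\sigma\rangle$ for some integer $k$. To identify $k$, I plug $\delta_\ell^k$ into Definition~\ref{def:algebraic_winding_number}: the word $\delta_\ell^k$ contains no $\sigma$ letters, so each of its $|k|$ factors $\delta_\ell^{\operatorname{sgn}(k)}$ sits after a prefix of zero $\sigma$-exponent and therefore contributes $(-1)^{0+1}\operatorname{sgn}(k)=-\operatorname{sgn}(k)$. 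Summing gives $t(\omega_i)=-k$, and rearranging yields $\omega_i=\delta_\ell^{-t(\omega_i)}$, as claimed.

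The main obstacle is the first step, namely justifying the use of Theorem~\ref{thm:winding} when $\omega_i$ is not already of the precise syntactic form $\sigma^{b_n}\delta_\ell^{-a_n}\cdots\sigma^{b_1}\delta_\ell^{-a_1}$ required by the theorem. Because the slope of a cabling is a tunnel invariant, one can replace $\omega_i$ by any other word in its coset (using the presentation from Lemma~\ref{lem:subgp_presentation}) without altering the slope, and Proposition~\ref{prop:algebraic_winding_number} guarantees the analogous invariance of $t$ under such rewriting. The write-up must interleave these two invariance statements with the syntactic bijection of Lemma~\ref{lem:cosets} so that the coset-theoretic criterion $p=0$ transfers unambiguously back to the original $\omega_i$, at which point the final $t$-calculation closes the argument.
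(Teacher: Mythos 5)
Your proposal is correct and follows essentially the same route as the paper's proof: apply Theorem~\ref{thm:winding} to a standard-form representative, use Lemma~\ref{lem:cosets} to conclude that an infinite slope forces $\omega_i=\delta_\ell^k$, and then compute $t(\delta_\ell^k)=-k$. Your extra care about the shift $2t(\omega_{\mathrm{right}})$ not affecting whether the slope is infinite, and about the invariance of $t$ under rewriting in $\langle\delta_\ell,\sigma\rangle$ (which the paper handles by noting that inserting or deleting the relator $\delta_\ell\sigma\delta_\ell\sigma$ does not change the winding number), only makes explicit points the paper treats briefly.
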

\begin{proof}
Write $\omega_i(\delta_\ell,\sigma)=
\sigma^{b_n}\delta_\ell^{-a_n}\cdots \sigma^{b_1}\delta_\ell^{-a_1}$.  By
Theorem~\ref{thm:winding}, the cabling produced by $\delta_m\sigma\cdot
\omega_i(\delta_\ell,\sigma)$ has slope $[2t+2a_1,b_1,\ldots,2a_n,b_n]$,
where $t$ is the algebraic winding number of the portion of $\omega$
that follows $\omega_i(\delta_\ell,\sigma)$.  By Lemma~\ref{lem:cosets},
this is infinite exactly when $\sigma^{b_n}\delta_\ell^{-a_n}\cdots
\sigma^{b_1}\delta_\ell^{-a_1}$ is equal in $\langle
\delta_\ell,\sigma\rangle$ to some power $\delta_\ell^k$. Since
inserting or deleting the word $\delta_\ell\sigma\delta_\ell\sigma$ does
not change the winding number of a braid, we have
$t(\omega_i(\delta_\ell,\sigma))=t(\delta_\ell^k)=-k$ and the lemma
follows.
\end{proof}

We can now give the algorithm. Suppose that in the braid description
$\delta_m\sigma\cdot \omega_d(\delta_\ell,\sigma)\cdot
\delta_m\sigma\cdot \omega_{d-1}(\delta_\ell,\sigma) \cdots
\delta_m\sigma\cdot \omega_0(\delta_\ell,\sigma)$, the portion
$\delta_m\sigma\cdot \omega_i(\delta_\ell,\sigma)$ produces a cabling of
infinite slope. If $i\neq 0,d$, we have
\begin{eqnarray*}
& & \cdots \omega_{i+1}(\delta_\ell,\sigma)\;\delta_m\sigma\;
\omega_i(\delta_\ell,\sigma)\;\delta_m\sigma\;
\omega_{i-1}(\delta_\ell,\sigma)\;\delta_m\sigma \cdots\\
& = & \cdots \omega_{i+1}(\delta_\ell,\sigma)\;\delta_m\sigma\;
\delta_\ell^{-t(\omega_i(\delta_\ell,\sigma))}\;\delta_m\sigma \,
\omega_{i-1}(\delta_\ell,\sigma)\;\delta_m\sigma \cdots\\
& = & \cdots \omega_{i+1}(\delta_\ell,\sigma)\;
\delta_\ell^{t(\omega_i(\delta_\ell,\sigma))}
\;\delta_m\sigma\;
\delta_m\sigma\,\omega_{i-1}(\delta_\ell,\sigma)\;\delta_m\sigma  \cdots\\
& = & \cdots \omega_{i+1}(\delta_\ell,\sigma)\;
\delta_\ell^{t(\omega_i(\delta_\ell,\sigma))}
\,\omega_{i-1}(\delta_\ell,\sigma)\;\delta_m\sigma  \cdots
\end{eqnarray*}
with $d$ decreased by $2$. In going from the second line to the third,
we used the fact that
\[\delta_m\sigma \delta_\ell
= \delta_m \delta_\ell^{-1}\sigma^{-2}\sigma
= \delta_m \delta_\ell^{-1}
(\delta_\ell\delta_m^{-1}\delta_\ell^{-1}\delta_m)\sigma
= \delta_\ell^{-1}\delta_m\sigma\ .
\]

In the special case when $i=d$, this looks like
\begin{eqnarray*}
& & \delta_m\sigma\;
\delta_\ell^{-t(\omega_d(\delta_\ell,\sigma))}
\;\delta_m\sigma \;
\omega_{d-1}(\delta_\ell,\sigma)\; \delta_{d-2}\sigma\;\omega_3(\delta_\ell,\sigma)\cdots\\
& = &
\delta_\ell^{t(\omega_d(\delta_\ell,\sigma))}
\;\delta_m\sigma\;
\delta_m\sigma\;\omega_{d-1}(\delta_\ell,\sigma)\; \delta_{d-2}\sigma \;\omega_3(\delta_\ell,\sigma)\cdots\\
& = &
\delta_\ell^{t(\omega_d(\delta_\ell,\sigma))}
\;\omega_{d-1}(\delta_\ell,\sigma)\;
\delta_m\sigma \;\omega_{d-2}(\delta_\ell,\sigma)\cdots\\
& \sim &  \delta_m\sigma\;\omega_{d-2}(\delta_\ell,\sigma)
\end{eqnarray*}
with $d$ decreased by~$2$.

In the special case when $i=0$, we have
\begin{eqnarray*}
& & \cdots \omega_1(\delta_\ell,\sigma)\;\delta_m\sigma\;\omega_0(\delta_\ell,\sigma)\\
& =  & \cdots \omega_1(\delta_\ell,\sigma)\;\delta_m\sigma\;
\delta_\ell^{-t(\omega_0(\delta_\ell,\sigma))}\\
& = & \cdots \omega_1(\delta_\ell,\sigma)\;
\delta_\ell^{t(\omega_0(\delta_\ell,\sigma))}
\delta_m\sigma\\
& \sim & \cdots \omega_1(\delta_\ell,\sigma)\;
\delta_\ell^{t(\omega_0(\delta_\ell,\sigma))}
\end{eqnarray*}
with $d$ decreased by $1$.

We repeat these until there are no cablings of infinite slope. The cabling
slopes can then be read off from the new $\omega_i(\delta_\ell,\sigma)$,
starting from the rightmost. Some of the initial cablings may have integral
\textit{simple} slope, which occurs when their ordinary slope is of the
form~$1/k$. The first slope invariant is obtained by inverting the first
slope not of the form $1/k$ (and regarding the result as an element of
$\Q/\Z$). In terms of the algebraic manipulations we have been doing, what
is happening is this: When the slope associated to
$\omega_0(\delta_\ell,\sigma)$ is some $1/k$, its continued fraction has
value equal to that of the continued fraction
$[0,k]$. Lemma~\ref{lem:cosets} shows that $\omega_0(\delta_\ell,\sigma)$
is equal to $\delta_\ell^{-t(\omega_0(\delta_\ell,\sigma))}\sigma^k$.
So we have
\begin{eqnarray*}
& & \cdots \omega_1(\delta_\ell,\sigma)\;\delta_m\sigma\;\omega_0(\delta_\ell,\sigma)\\
& =  & \cdots
\omega_1(\delta_\ell,\sigma)\;\delta_m\sigma\;
\delta_\ell^{-t(\omega_0(\delta_\ell,\sigma))}\sigma^k\\
& = & \cdots
\omega_1(\delta_\ell,\sigma)\;
\delta_\ell^{t(\omega_0(\delta_\ell,\sigma))}
\,\delta_m\sigma\, \sigma^k\\
& \sim & \cdots
\omega_1(\delta_\ell,\sigma)\,\delta_\ell^{t(\omega_0(\delta_\ell,\sigma))}
\end{eqnarray*}
with $d$ decreased by $1$.

\section{Computations}
\label{sec:computation}

We have implemented the algorithms of
Sections~\ref{sec:finding_braid_words} and~\ref{sec:word_simplify} in a
script available at~\cite{slopes}. In this section, we give some sample
calculations.

Using the algorithm of Section~\ref{sec:word_simplify}, slopes of the upper
tunnel or lower tunnel are computed from a braid description. For
example, the braid
$\delta_m^3\sigma^{-2}\delta_\ell^3\sigma^{-4}\delta_m^{-1}\sigma^{-4}\delta_\ell^3$
gives
\smallskip

\noindent\texttt{Semisimple> upperSlopes( 'm 3 s -2 l 3 s -4 m -1 s -4 l 3' )}\\
\texttt{[ 21/25 ], 341/60, -13, -13}
\smallskip

\noindent To compute the lower slopes, the script just finds the reverse
braid and applies $\texttt{upperSlopes}$:
\smallskip

\noindent\texttt{Semisimple> lowerSlopes( 'm 3 s -2 l 3 s -4 m -1 s -4 l 3' )}\\
\texttt{[ 16/19 ], -7, -7, -195/31, -5, -5}
\smallskip

\noindent Using the method of Section~\ref{sec:finding_braid_words}, a
braid describing the $(1,1)$-position can be recovered from the upper
tunnel slope sequence. In the next example, the slope sequence
$[21/25]$, $341/60$, $-13$, $-13$ is represented as the input list
$[21,25,341,60,-13,1,-13,1]$:
\smallskip

\noindent\texttt{Semisimple> print braidWord( [21,25,341,60,-13,1,-13,1] )
)}\\
\texttt{m 3 s -3 m -1 l -2 m 1 l -1 s -4 m 1 s -4 m -1 l -2 m 1 l -1}
\smallskip

\noindent which checks:
\smallskip

\noindent\texttt{Semisimple> upperSlopes( 'm 3 s -3 m -1 l -2 m 1 l -1 s -4}\\
\texttt{m 1 s \allowbreak -4 m -1 l -2 m 1 l -1' )}\\
\texttt{[ 21/25 ], 341/60, -13, -13}
\smallskip

To compute the slopes of one tunnel associated to a $(1,1)$-position from
the slopes of the other, the script generates a braid describing an
upper tunnel which has those slopes, then find the slope sequence of the lower
tunnel:
\smallskip

\noindent\texttt{Semisimple> dualSlopes([21,25,341,60,-13,1,-13,1])}\\
\texttt{[ 16/19 ], -7, -7, -195/31, -5, -5}
\smallskip

\noindent\texttt{Semisimple> dualSlopes([16,19,-7,1,-7,1,-195,31,-5,1,-5,1])}\\
\texttt{[ 21/25 ], 341/60, -13, -13}
\smallskip

\noindent Functions are also included which calculate slope sequences for
semisimple tunnels of $2$-bridge and torus knots. For example, for
$2$-bridge knots we have\par
\smallskip

\noindent\texttt{Semisimple> twoBridge( 413, 227 )}\\
\noindent\texttt{Upper simple tunnel:     [ 131/413 ]}\\
\noindent\texttt{Upper semisimple tunnel: [ 1/3 ], 15/7, 9/5}\\
\noindent\texttt{Lower simple tunnel:     [ 227/413 ]}\\
\noindent\texttt{Lower semisimple tunnel: [ 2/5 ], -1, -3/2, 1, 1, 1, 3}
\smallskip

\noindent\texttt{Semisimple> print upperSemisimpleBraidWord( 413, 227 )}\\
\noindent\texttt{m -1 s -6 m -1 s 6 m -1 s 1 l -1}
\smallskip

\noindent\texttt{Semisimple> print lowerSimpleBraidWord( 413, 227 )}\\
\noindent\texttt{m -1 s 1 l -1 s 6 l -1 s -6 l -1}

\noindent For torus knots, we have:\par
\smallskip

\noindent\texttt{Semisimple> torusUpperSlopes( 13, 5 )}\\
\noindent\texttt{[ 1/5 ], 11, 15, 21}
\smallskip

\noindent\texttt{Semisimple> torusLowerSlopes( 13, 5 )}\\
\noindent\texttt{[ 1/3 ], 3, 3, 5, 5, 7, 7, 7, 9, 9}
\smallskip

\noindent\texttt{Semisimple> print fullTorusBraidWord( 13, 5 )}\\
\noindent\texttt{l -2 m 1 l -3 m 1 l -2 m 1 l -3 m 1 l -3 m 1}
\smallskip

Theorem~\ref{thm:slope_sequence_characterization} allows us to test whether
a slope sequence belongs to some $2$-bridge knot tunnel:\par
\smallskip

\noindent\texttt{Semisimple> find2BridgeKnot( [ 1, 3, 15, 7, 9, 5 ] )}\\
\noindent\texttt{The tunnel is the upper semisimple tunnel of K( 413, 227 ), or}\\
\noindent\texttt{equivalently the lower semisimple tunnel of K( 413, 131).}
\smallskip

\noindent\texttt{Semisimple> find2BridgeKnot( [ 1, 3, 15, 8, -9, 5 ] )}\\
\noindent\texttt{The tunnel is the upper semisimple tunnel of K( 493, 222 ), or}\\
\noindent\texttt{equivalently the lower semisimple tunnel of K( 493, 171).}
\smallskip

\noindent\texttt{Semisimple> find2BridgeKnot( [ 1, 3, 15, 11, 9, 5 ] )}\\
\noindent\texttt{Slopes other than first must be of the form 2 + 1/k or}\\
\noindent\texttt{2 - 1/k.}\par
\smallskip

\noindent\texttt{Semisimple> find2BridgeKnot( [ 1, 3, 15, 8, 9, 5 ] )}\\
\noindent\texttt{The ith and (i+1)st slopes must have opposite signs}\\
\noindent\texttt{when k sub i is even.}
\smallskip

\noindent\texttt{Semisimple> find2BridgeKnot( [ 1, 3, -15, 8, 9, 5 ] )}\\
\noindent\texttt{m1 must be positive or negative according as n0 is odd or}\\
\noindent\texttt{even.}\par

\bibliographystyle{amsplain}

\end{document}